\documentclass[11pt]{amsart}

\usepackage{graphicx, amsmath, amssymb, amsthm, color}
\topmargin 0pt
\advance \topmargin by -\headheight
\advance \topmargin by -\headsep
\textheight 8.9in
\oddsidemargin 0pt
\evensidemargin \oddsidemargin
\marginparwidth 0.5in
\textwidth 6.5in

\vfuzz2pt 
\hfuzz2pt 
\newtheorem{thm}{Theorem}[section]
\newtheorem{cor}[thm]{Corollary}
\newtheorem{lem}[thm]{Lemma}
\newtheorem{prop}[thm]{Proposition}

\theoremstyle{definition}

\theoremstyle{remark}
\newtheorem{rem}[thm]{Remark}
\numberwithin{equation}{section}

\newcommand{\leg}[2]{\left(\frac{#1}{#2}\right)}

\newcommand{\Z}{\mathbb Z}
\newcommand{\C}{\mathbb C}
\newcommand{\F}{\mathbb F}
\newcommand{\Q}{\mathbb Q}

\newcommand{\Fha}[4]{{}_2F_1\left(\left. \begin{array}{cc} #1 & #2 \\     \  & #3  \end{array}\right| #4 \right)}
\newcommand{\Fhn}[4]{{}_{n+1}F_n\left(\left. \begin{array}{cc} #1 & #2 \\     \  & #3  \end{array}\right| #4 \right)}
\newcommand{\Fhth}[6]{{}_3F_2\left(\left. \begin{array}{ccc}  #1 & #2 & #3 \\ \  & #4 & #5   \end{array}\right| #6 \right)}
\newcommand{\Fhf}[8]{{}_4F_3\left(\left. \begin{array}{cccc}  #1 & #2 & #3 & #4\\ \  & #5 & #6 & #7   \end{array}\right| #8 \right)}

\newcommand{\tr}{\textrm{tr}}

\newcommand{\ord}{\textrm{ord}}

\newcommand{\case}[1]{\noindent\textbf{Case #1:}}
\newcommand{\hs}[1]{h^*\left (#1 \right )}
\newcommand{\h}[1]{h\left (#1 \right )}

\newcommand{\ep}{\epsilon}

\newcommand{\bin}[2]{\left( {#1 \atop #2} \right)}

\begin{document}
\title[Traces of Hecke Operators and Hypergeometric Functions]{Trace Formulas for Hecke Operators, Gaussian Hypergeometric Functions, and the Modularity of a Threefold}
\author{Catherine Lennon}
\thanks{This work was supported by the Department of Defense (DoD) through the National Defense Science \& Engineering Graduate Fellowship (NDSEG) Program.}

\begin{abstract}
We present here simple trace formulas for  Hecke operators $T_k(p)$ for all $p>3$ on  $S_k(\Gamma_0(3))$ and $S_k(\Gamma_0(9))$, the spaces of cusp forms of weight $k$ and levels 3 and 9.  These formulas can be expressed in terms of special values of Gaussian hypergeometric series and lend themselves to  simple recursive expressions in terms of traces of Hecke operators on spaces of lower weight.  Along the way, we show how to express the traces of Frobenius of a family of elliptic curves with 3-torsion as special values of a Gaussian hypergeometric series over $\F_q$, when $q\equiv 1 \pmod{3}$.  We also use these formulas to provide a simple expression for the Fourier coefficients of $\eta(3z)^8$, the unique normalized cusp form of weight 4 and level 9, and then show that the number of points on a certain threefold is expressible in terms of these coefficients.
\end{abstract}

\maketitle
\section{Introduction and statement of results}
In this paper, we consider the problem of expressing traces of Hecke operators in terms of Gaussian hypergeometric series, where these functions are the finite field analogues of classical hypergeometric series. Although in general the trace formula is quite complicated, recent work such as \cite{ahlg,ahlgo,frech,fuselier} has shown that Gaussian hypergeometric series provide a natural way to express trace formulas and a useful tool for simplifying expressions for the  Fourier coefficients of cusp forms. We continue to study the connections between trace formulas and hypergeometric series in this paper and provide simple recursive formulas for levels 3 and 9. In order to do this, we parametrize elliptic curves with 3-torsion in such a way that is easy to determine how many isogenous curves exist.  We use the results obtained to provide a simple expression for the Fourier coefficients of $\eta(3z)^8$, the unique normalized cusp form of weight 4 and level 9, in terms of Jacobi sums.  Using purely elementary techniques, we are able to then provide a threefold whose number of points over $\F_p$ can be expressed in terms of the Fourier coefficients of a modular form.

As a step along the way, we also prove that the trace of the Frobenius endomorphism on curves in our family is equal to a special value of a Gaussian hypergeometric series. Earlier results such as \cite{koike, ono} have proven formulas  for other classes of elliptic curves, including the Legendre family. As in these cases, the classical hypergeometric series analogue of the Gaussian hypergeometric series  obtained matches that giving the period of the elliptic curve. This is not surprising given the relationship between periods of curves and their Hasse-Witt matrix, as well as the strong connection between Gaussian hypergeometric series and their classical counterparts \cite{clemens, manin, igusa}.

We begin with some preliminary definitions needed to state  our results. Let $q=p^e$ be a power of an odd prime and $\F_q$ the finite field of $q$ elements. Extend each character $\chi \in \widehat{\F^*_q}$ to all of $\F_q$ by setting $\chi(0):=0$.  For any two characters $A,B\in \widehat{\F^*_q}$ one can  define the normalized Jacobi sum by
\begin{equation}
\bin{A}{B}:=\frac{B(-1)}{q}\sum_{x\in \F_q}A(x)\bar{B}(1-x)=\frac{B(-1)}{q} J(A, \bar{B})
\end{equation}
where $J(A,B)$ denotes the usual Jacobi sum.

Recall the definition of the  \emph{Gaussian hypergeometric series over $\F_q$} first defined by Greene in \cite{greene}. For any positive integer $n$ and characters $A_0,A_1,...,A_n$ and $B_1,B_2,...,B_n \in \widehat{\F_q^*}$, the Gaussian hypergeometric series $_{n+1}F_n$ is defined to be
\begin{equation}
 _{n+1}F_n \left( \left.{\begin{array}{cccc}
                A_0 & A_1 & ... & A_n \\
                \  & B_1 & ...  & B_n
              \end{array}
}\right|x\right)_q:=\frac{q}{q-1}\sum_{\chi \in \widehat{\F_q^*}}\left(A_0\chi \atop \chi\right)\left(A_1 \chi\atop B_1 \chi\right)...\left(A_n \chi \atop B_n \chi \right)\chi(x).
\end{equation}
We will sometimes drop the subscript $q$ when it is clear what field we are working in, and just write $\Fhn{A_0}{A_1\ ...\  A_n}{B_1\ ...\  B_n}{x}$. See also Katz \cite{katz} (in particular Section 8.2) for more information on how these sums naturally arise as the traces of Frobenius at closed points of certain $\ell$-adic hypergeometric sheaves.

Gaussian hypergeometric series are of interest because of their connection to the arithmetic properties of varieties, as demonstrated in \cite{koike,ono,fuselier}.  In this paper, we provide further evidence for this connection; in particular, consider an elliptic curve over $\Q$ in the form

\begin{equation}\label{eform} E_{a_1,a_3}:  y^2+ a_1 xy+a_3 y=x^3,
\end{equation}
where $a_i\in \Z$. If  $p$ is a prime for which $E_{a_1,a_3}$ has good reduction, let $\tilde{E}_{a_1,a_3}$ denote the same curve reduced modulo $p$ and $\tilde{E}_{a_1,a_3}(\F_q)$ its $\F_q$-rational points. For each $q=p^e$, write the trace  of the Frobenius map on $\tilde{E}_{a_1,a_3}(\F_q)$ as $t_q(E_{a_1,a_3})$, so that
\begin{equation}
t_q(E_{a_1,a_3})=q+1-|\tilde{E}_{a_1,a_3}(\F_q)|.
\end{equation}
 Then this value may be expressed in terms of Gaussian hypergeometric functions as follows.

\begin{thm}\label{hyper}
Let $E_{a_1,a_3}$ be an elliptic curve over $\Q$ in the form given in equation (\ref{eform}) and let $p$ be a prime for which $E_{a_1,a_3}$ has good reduction. Also assume that $p\nmid a_1$, and $q=p^e\equiv 1 \pmod{3}$.  Let $\rho \in \widehat{\F_q^*}$ be a character of order three, and let $\epsilon$ be the trivial character. Then the trace of the Frobenius map on $\tilde{E}_{a_1,a_3}(\F_q)$ is given by $$t_q(E_{a_1,a_3})=-q\cdot \Fha{\rho}{\rho^2}{\epsilon}{27a_1^{-3}a_3}_q.$$
\end{thm}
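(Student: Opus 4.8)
The plan is to count $\F_q$-rational points on the reduced curve fiber by fiber, rewrite the answer as a character sum, and then recognize that sum as the asserted value of ${}_2F_1$. First I would reduce to $a_1=1$: since $p\nmid a_1$, the class of $a_1$ is a unit of $\F_q$, and $(x,y)\mapsto(a_1^2x,a_1^3y)$ is an $\F_q$-isomorphism carrying $E_{1,\mu}\colon y^2+xy+\mu y=x^3$ onto $\tilde E_{a_1,a_3}$, where $\mu:=a_1^{-3}a_3$; since $t_q$ is an isomorphism invariant it suffices to prove $t_q(E_{1,\mu})=-q\cdot\Fha{\rho}{\rho^2}{\epsilon}{27\mu}_q$. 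Note that good reduction forces $p\nmid a_3$ (because $a_3^3$ divides the discriminant), so $\mu\ne0$ in $\F_q$; this is used below.

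Reading the equation of $E_{1,\mu}$ as the quadratic $y^2+(x+\mu)y-x^3=0$ in $y$ and using that $p$ is odd, for a given $x\in\F_q$ the number of $y\in\F_q$ solving it is $1+\phi\big((x+\mu)^2+4x^3\big)$, where $\phi$ is the quadratic character (this stays correct at the $x$'s where the discriminant vanishes, since $\phi(0)=0$). Summing over $x$ and adjoining the point at infinity, $E_{1,\mu}$ has $q+1+\sum_{x\in\F_q}\phi\big(4x^3+(x+\mu)^2\big)$ points over $\F_q$, so
\[
t_q(E_{1,\mu})=-\sum_{x\in\F_q}\phi\big(4x^3+(x+\mu)^2\big)=-\sum_{x\in\F_q}\phi\big(4\mu x^3+(x+1)^2\big),
\]
the second equality coming from the substitution $x\mapsto\mu x$ (legitimate as $\mu\ne0$), which exhibits the dependence on the single parameter $27\mu$.

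It then remains to prove the identity
\begin{align*}
\sum_{x\in\F_q}\phi\big(4x^3+(x+\mu)^2\big)&=q\cdot\Fha{\rho}{\rho^2}{\epsilon}{27\mu}_q\\
&=\frac{q^2}{q-1}\sum_{\chi}\bin{\rho\chi}{\chi}\bin{\rho^2\chi}{\chi}\,\chi(27\mu),
\end{align*}
the sum running over all characters $\chi$ of $\F_q^*$. One natural route unfolds the left side with Gauss sums: writing $\phi(w)=g(\phi)^{-1}\sum_{s\in\F_q}\phi(s)\,\psi(sw)$ for a fixed nontrivial additive character $\psi$ reduces the problem to the cubic exponential sum $\sum_{x\in\F_q}\psi\!\big(s(4x^3+(x+\mu)^2)\big)$; evaluating this (completing the cube) and then carrying out the outer sum over $s$ against the weight $\phi(s)$ introduces Gauss sums of the order-three characters $\rho$ and $\rho^2$ — this is the step that uses $q\equiv1\pmod 3$, which guarantees that such characters exist. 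Reassembling, the Hasse--Davenport product (multiplication) formula applied to the triple $\chi,\chi\rho,\chi\rho^2$ brings in the constant $27=3^3$, which combines with a factor $\chi(\mu)$ from the remaining manipulations to produce the summand $\chi(27\mu)$; rewriting the surviving quotients of Gauss sums as the normalized Jacobi sums $\bin{\rho\chi}{\chi}\bin{\rho^2\chi}{\chi}$ then reproduces the defining series of $\Fha{\rho}{\rho^2}{\epsilon}{27\mu}_q$. (Equivalently, one may begin from the one-variable ``integral'' representation $q\cdot\Fha{\rho}{\rho^2}{\epsilon}{z}_q=\sum_{y\in\F_q}\rho^2(y)\,\rho(1-y)\,\rho^2(1-zy)$, valid for $z\ne0$ since $\rho(-1)=1$, and transform the left-hand character sum into exactly this shape by a chain of substitutions and Gauss/Jacobi-sum identities.)

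I expect this last identification to be the main obstacle. The quadratic character $\phi$ is forced on us by the $y^2$ in the Weierstrass model, whereas the hypergeometric series carries the cubic character $\rho$; and since $4x^3+(x+\mu)^2$ has no repeated-root structure, $\rho$ cannot be introduced by a naive cube-root count but must be extracted from the Gauss-sum analysis of the cubic term. Bookkeeping the resulting product of Gauss sums — and the constant $27$ that the Hasse--Davenport relation supplies — is where essentially all of the difficulty lies. As a consistency check, the identity is the expected finite-field analogue of the classical fact recalled in the introduction: the period of $E_{1,\mu}$ is a constant multiple of ${}_2F_1(\tfrac13,\tfrac23;1;27\mu)$.
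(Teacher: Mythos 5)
Your opening reductions are correct and cleanly justified: the rescaling to $E_{1,\mu}$ with $\mu=a_1^{-3}a_3$, the remark that good reduction forces $p\nmid a_3$, the fiberwise count giving $t_q(E_{1,\mu})=-\sum_{x\in\F_q}\phi\bigl(4x^3+(x+\mu)^2\bigr)$ with $\phi$ the quadratic character, and the substitution showing dependence on $\mu$ alone. But everything after that is a sketch, and the sketch's central step fails as described. The entire content of the theorem is the identity $\sum_{x}\phi\bigl(4x^3+(x+\mu)^2\bigr)=q\cdot\Fha{\rho}{\rho^2}{\epsilon}{27\mu}_q$, and your proposed route to it is to expand $\phi$ by Gauss sums and evaluate $\sum_x\psi\bigl(s(4x^3+(x+\mu)^2)\bigr)$ by ``completing the cube.'' Completing the cube in $4x^3+(x+\mu)^2$ produces a depressed cubic $4u^3+\bigl(2\mu-\tfrac{1}{12}\bigr)u+c$ whose linear term is generically nonzero, and an additive character sum $\sum_u\psi\bigl(s(4u^3+au+c)\bigr)$ with $a\neq 0$ is \emph{not} expressible through Gauss sums of the cubic characters: the mechanism $\sum_x\psi(sx^3)=\sum_{\chi^3=\epsilon,\,\chi\neq\epsilon}\bar\chi(s)G(\chi)$ requires a pure cube. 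So the step at which $\rho$, $\rho^2$ and the constant $27$ are supposed to appear does not go through, and you yourself flag this as the unfinished main obstacle.

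The paper's proof sidesteps exactly this problem by never summing out $y$ with the quadratic character. It keeps the model $y^2+a_1xy+a_3y=x^3$, in which the cubic term is pure, detects $P(x,y)=0$ with an auxiliary additive-character sum over $z$, and expands every additive character into multiplicative ones via Gauss sums (Fuselier's lemma). The orthogonality conditions from the $x$-, $y$- and $z$-sums then force the single index relation $k=-3m$, $l=j=m$, producing $\frac{1}{q-1}\sum_m G_{-m}^3G_{3m}T^{-3m}(a_1)T^m(-a_3)$; the Davenport--Hasse relation rewrites $G_{3m}$ as $G_mG_{m+\frac{q-1}{3}}G_{m+\frac{2(q-1)}{3}}\,T^m(27)/q$, which is where the order-three characters and the $27$ genuinely enter, and the Gauss--Jacobi identity assembles the binomials $\bin{\rho\chi}{\chi}\bin{\rho^2\chi}{\chi}$. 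To rescue your approach you would either have to carry out an analogous double expansion of $\sum_{s,x}\phi(s)\psi(4sx^3)\psi\bigl(s(x+\mu)^2\bigr)$ keeping the cube pure (an unverified and nontrivial bookkeeping), or verify directly the identity against Greene's representation $q\cdot\Fha{\rho}{\rho^2}{\epsilon}{z}_q=\sum_{y}\rho^2(y)\rho(1-y)\rho^2(1-zy)$; as written, the proof has a genuine gap precisely at its main step.
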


 If we write the Hasse-Weil $L$-function of $E_{a_1,a_3}$ as
\begin{equation}
L(s,E_{a_1,a_3})=\sum_{n=1}^{\infty}a_n(E_{a_1,a_3})n^{-s}
\end{equation}
then we can use Theorem \ref{hyper} to express the coefficients $a_n(E_{a_1,a_3})$ in terms of Gaussian hypergeometric functions.
Expressions for the trace of Frobenius like the one above turn out to be a common phenomenon and generalizing this result will be the subject of another paper \cite{lennon}.

For each integer $k\geq 2$ we denote the  space of cusp forms of weight $k$ and trivial character on $\Gamma_0(N)$ by $S_k(\Gamma_0(N))$. For each integer $n$ such that $\gcd(n,N)=1$, let $T_k(n)$ denote the $n$th Hecke operator on this space and $\tr_k(\Gamma_0(N),n)$ the trace of $T_k(n)$. We will prove that $\tr_k(\Gamma_0(3),p)$ can be expressed as follows:

\begin{thm}\label{tracefor3} Let $p\neq 3$ be prime. For $t\in \F_p^*$, let $E_t:=E_{t,t^2}$ denote the curve $y^2+txy+t^2y=x^3$, and let $a_{q}(E_t)$ be the coefficient of $q^{-s}$ in the Hasse-Weil $L$-function of $E_t$. For any even $k\geq 4$, the trace of $T_k(p)$ on $S_k(\Gamma_0(3))$ can be written as
                    $$\tr_k(\Gamma_0(3),p)=-\sum_{t\in \F_p, \Delta(E_t) \neq 0} a_{p^{k-2}}(E_t)-\gamma_k(p)-2,$$
where
\begin{equation}\label{gamma}
\gamma_k(p):=\left\{
                 \begin{array}{ll}
                   \frac{1}{3}( a_{p^{k-2}}(E_{0,\alpha})+a_{p^{k-2}}(E_{0,\alpha^2})+ a_{p^{k-2}}(E_{0,\alpha^3})    ) & \hbox{if $p\equiv 1\pmod{3}$,} \\
                   (-p)^{k/2-1} & \hbox{if $p\equiv 2 \pmod{3}$,}
                 \end{array}
               \right.
\end{equation}
and $\alpha \in \F_p^*$ is not a cube.

\end{thm}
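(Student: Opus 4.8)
The plan is to deduce the identity from the Eichler--Selberg trace formula by recasting its elliptic (class number) term as the sum over the family $\{E_t\}$. Since $k$ is even and the only Dirichlet character of conductor dividing $3$ with $\chi(-1)=1$ is trivial, $\tr_k(\Gamma_0(3),p)=\tr_k(\Gamma_1(3),p)$, so I may work with the moduli problem of pairs $(E,Q)$, $Q$ a rational point of exact order $3$ on $E$; this is a fine moduli problem away from its single elliptic point and is exactly what $\{E_t\}$ parametrizes. For $p\neq 3$ prime and even $k\geq 4$ the trace formula, in the ``elliptic curves'' form used in \cite{ahlgo,fuselier,frech} (going back to Eichler, with Schoof's point-counting variant), can be put in the form
\[
\tr_k(\Gamma_0(3),p)\;=\;-\sum_{[x]\in Y_1(3)(\F_p)}\frac{a_{p^{k-2}}(E_x)}{\#\mathrm{Aut}_{\F_p}(x)}\;-\;2 ,
\]
where $a_{p^{k-2}}(E)=P_k(a_p(E),p)$ is the coefficient in the statement (equivalently, the trace of $\mathrm{Frob}_p$ on $\mathrm{Sym}^{k-2}$ of the Tate module of $E$), the $-2$ comes from the two cusps of $\Gamma_0(3)$, and there is no identity term because $p$ is not a perfect square; one can also obtain this from the Grothendieck--Lefschetz formula for $\mathrm{Sym}^{k-2}(R^1\pi_*\Q_\ell)$ over the universal curve together with Eichler--Shimura. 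It then remains to evaluate the sum.

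To evaluate it I would pass to the Tate normal form: any elliptic curve over $\F_p$ with a rational point $Q$ of order $3$ is, uniquely, $E_{a_1,a_3}:y^2+a_1xy+a_3y=x^3$ with $Q=(0,0)$, and the only residual coordinate change rescales $(a_1,a_3)$ to $(ua_1,u^3a_3)$, $u\in\F_p^*$; hence the pair's isomorphism class is determined by $a_3a_1^{-3}$ when $a_1\neq 0$, together with the single class $a_1=0$ (the $j=0$ curves $E_{0,a_3}$ with their canonical $3$-subgroup). Because $E_{t,t^2}\cong E_{1,1/t}$ --- this is the substitution $27a_1^{-3}a_3\mapsto 27/t$ appearing in Theorem \ref{hyper} --- the curves $E_t$ with $t\in\F_p^*$ realize every class with $a_1\neq 0$, and since $\Delta(E_t)=t^8(t-27)$ the smooth members are those with $t\in\F_p\setminus\{0,27\}$; these are in bijection with $Y_1(3)(\F_p)$ minus its elliptic point (here $t=27$ is one of the two cusps, $E_{0,0}$ at $t=0$ is singular although its moduli position is the elliptic point $a_1=0$, and the other cusp lies at $t=\infty\notin\F_p$). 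For each such $t$ one has $\mathrm{Aut}_{\F_p}(E_t,Q)=\{1\}$: clear for $j(E_t)\neq 0,1728$, and a $j=1728$ curve's order-$4$ automorphism stabilizes no cyclic $3$-subgroup since its action on $E[3]$ has irreducible characteristic polynomial over $\F_3$. Thus each $E_t$, $t\in\F_p\setminus\{0,27\}$, contributes exactly $a_{p^{k-2}}(E_t)$, recovering the sum $\sum_{t\in\F_p,\,\Delta(E_t)\neq 0}a_{p^{k-2}}(E_t)$.

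What remains is the contribution of the elliptic point $a_1=0$, the $j=0$ curves $E_{0,a_3}$ with their canonical rational $3$-subgroup, which is $\gamma_k(p)$. If $p\equiv 1\pmod 3$ the cube roots of unity are in $\F_p$, so $\mathrm{Aut}_{\F_p}(E_{0,a_3})\cong\Z/6$; the relevant pairs fall into three $\F_p$-isomorphism classes, one per class of $\F_p^*/(\F_p^*)^3$, represented by $E_{0,\alpha},E_{0,\alpha^2},E_{0,\alpha^3}$ with $\alpha$ a non-cube, and each enters with automorphism weight $1/3$, giving $\gamma_k(p)=\frac13\bigl(a_{p^{k-2}}(E_{0,\alpha})+a_{p^{k-2}}(E_{0,\alpha^2})+a_{p^{k-2}}(E_{0,\alpha^3})\bigr)$. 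If $p\equiv 2\pmod 3$ then cubing is a bijection of $\F_p^*$, there is a single such curve up to isomorphism, it is supersingular with $a_p=0$ so that $a_{p^{k-2}}(E_{0,a_3})=(-p)^{k/2-1}$, and the extra automorphisms are not defined over $\F_p$ so it enters with weight $1$, giving $\gamma_k(p)=(-p)^{k/2-1}$. Inserting $\sum_{[x]}=\sum_t a_{p^{k-2}}(E_t)+\gamma_k(p)$ into the displayed formula gives the theorem.

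The main obstacle I anticipate is establishing the trace formula in precisely the displayed shape --- in particular, that no Eisenstein contribution of size $p^{k-1}$ survives and that the constant is exactly $-2$ --- together with the accounting at the elliptic point, where the automorphism weights and the split between $p\equiv 1$ and $p\equiv 2\pmod 3$ must be handled carefully; this is where the precise form of $\gamma_k(p)$ is forced. If one instead quotes an already-packaged ``elliptic curves'' trace formula from the literature, the work concentrates on matching the family $\{E_t\}$ to the moduli of $(E,Q)$ and on the single elliptic-point term --- the elementary but delicate heart of the argument.
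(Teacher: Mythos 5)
Your moduli-theoretic bookkeeping is essentially correct, and it is in fact a cleaner organization than the paper's: passing to $\Gamma_1(3)$ (legitimate for even $k$, since the odd character space vanishes), using the Tate normal form to identify isomorphism classes of pairs $(E,Q)$ with $a_1\neq 0$ bijectively with the parameters $t\in\F_p\setminus\{0,27\}$, and weighting the $a_1=0$ classes by $1/\#\mathrm{Aut}$ (three classes of weight $\tfrac13$ when $p\equiv 1\pmod 3$, one supersingular class of weight $1$ with $a_p=0$, hence $a_{p^{k-2}}=(-p)^{k/2-1}$, when $p\equiv 2\pmod 3$) reproduces exactly $\sum_{t,\Delta(E_t)\neq 0}a_{p^{k-2}}(E_t)+\gamma_k(p)$. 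The paper reaches the same count by a different route: it works with unweighted isomorphism classes of curves via Schoof's $N(s)$, $N_{3\times 3}(s)$, analyzes the fibers of $t\mapsto[E_t]$ (which have $1$, $2$ or $4$ elements according to the $3$-torsion and $j$-invariant), and compensates for the extra automorphisms at $j=0,1728$ with the correction terms $\epsilon_3,\epsilon_4$ and the $h^*$ versus $h$ discrepancy; your automorphism-weighted pairs absorb all of that automatically.

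The genuine gap is the displayed input formula $\tr_k(\Gamma_0(3),p)=-\sum_{[x]\in Y_1(3)(\F_p)}a_{p^{k-2}}(E_x)/\#\mathrm{Aut}_{\F_p}(x)-2$. Given your (correct) identification of the weighted sum, this identity \emph{is} the theorem; it is not an off-the-shelf statement you can quote from \cite{ahlgo,frech,fuselier}, which establish analogous "elliptic curve" forms only for their own levels by precisely the kind of reduction you are skipping. Establishing it — in particular that the Eisenstein/boundary contribution is exactly the constant $-2$ for all $p\neq 3$ and even $k\geq 4$, with no surviving $p^{k-1}$-type or $\delta(k)(p+1)$ terms, and that the supersingular $s=0$ and CM discriminant $-3,-4$ classes enter with exactly the stated weights — is where the paper does its work: Hijikata's formula (Theorem \ref{hijtrace}), the removal of $c(s,f,\ell)$ via Cox's class number relation (Lemma \ref{removec}), the conversion of $H^*$ to $H$ to Schoof's curve counts (Theorem \ref{schoof1}, Lemma \ref{div}), and a separate argument for $p\equiv 2\pmod 3$. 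Your proposed alternative derivation via Grothendieck--Lefschetz for $\mathrm{Sym}^{k-2}$ of the relative $H^1$ plus Eichler--Shimura could be made rigorous, but as written it is only a pointer: the boundary cohomology at the two cusps, the stack-versus-coarse-space weights, and the verification that the congruence relation yields $\tr T_p$ (and not $\tr T_p$ plus its $p^{k-1}$-twisted dual) are exactly the delicate points you defer, and you acknowledge as much. So the proposal reduces the theorem to an unproven statement of essentially equal strength rather than proving it.
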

 Combining Theorems \ref{hyper} and \ref{tracefor3} and using the relation $t_{p^k}(E)=a_{p^k}(E)-p\cdot a_{p^{k-2}}(E)$ then yields the corollary:

\begin{cor}\label{cor1} Let $p\neq 3$ be prime and $k\geq 4$ even. One can alternately express the trace formula as
  $$\tr_k(\Gamma_0(3),p)=\sum_{i=0}^{k/2-2}p^{k-2-i}\sum_{t=2}^{p-1} \Fha{\rho}{\rho^2}{\epsilon}{t}_{p^{k-2-2i}}-p^{k/2-1}(p-2)-\gamma_k(p)-2.$$
\end{cor}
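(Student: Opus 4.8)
The plan is to deduce Corollary \ref{cor1} directly from Theorems \ref{hyper} and \ref{tracefor3}, by unwinding the coefficient $a_{p^{k-2}}(E_t)$ appearing in Theorem \ref{tracefor3} into a $\Z$-linear combination of Frobenius traces over the extension fields $\F_{p^{k-2-2i}}$, evaluating each such trace by Theorem \ref{hyper}, and then reorganizing the resulting double sum.

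First I would pin down which $E_t$ have good reduction at $p$. A direct discriminant computation for $y^2+txy+t^2y=x^3$ gives $\Delta(E_t)=t^8(t-27)$, so, since $p\neq 3$ forces $27\neq 0$ in $\F_p$, the curve $E_t$ has good reduction modulo $p$ exactly for $t\in\F_p^*\setminus\{27\}$, a set of $p-2$ elements. For such $t$ we also have $p\nmid a_1=t$, and with $a_1=t$, $a_3=t^2$ the argument appearing in Theorem \ref{hyper} is $27a_1^{-3}a_3=27t^{-1}$. Moreover $k$ is even, so each exponent $k-2-2i$ with $0\le i\le k/2-2$ is even and positive, and $p^m\equiv1\pmod3$ whenever $m$ is even and $p\neq3$; hence the congruence hypothesis $q\equiv1\pmod3$ of Theorem \ref{hyper} holds for $q=p^{k-2-2i}$. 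Thus for every $t\in\F_p^*\setminus\{27\}$ Theorem \ref{hyper} gives
\begin{equation*}
t_{p^{k-2-2i}}(E_t)=-p^{k-2-2i}\,\Fha{\rho}{\rho^2}{\epsilon}{27t^{-1}}_{p^{k-2-2i}}.
\end{equation*}

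Next I would iterate the identity $t_{p^m}(E)=a_{p^m}(E)-p\,a_{p^{m-2}}(E)$ (valid for $m\ge2$), starting at $m=k-2$ and descending in steps of $2$ to $m=2$, together with the normalization $a_1(E)=1$, to obtain
\begin{equation*}
a_{p^{k-2}}(E_t)=\sum_{i=0}^{k/2-2}p^i\,t_{p^{k-2-2i}}(E_t)+p^{k/2-1}.
\end{equation*}
Substituting the previous display and using $p^i\cdot p^{k-2-2i}=p^{k-2-i}$ turns the right side into $-\sum_{i=0}^{k/2-2}p^{k-2-i}\,\Fha{\rho}{\rho^2}{\epsilon}{27t^{-1}}_{p^{k-2-2i}}+p^{k/2-1}$. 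Inserting this into the formula of Theorem \ref{tracefor3} and summing over the $p-2$ good values of $t$, the constant terms $p^{k/2-1}$ collect to $-(p-2)p^{k/2-1}$; interchanging the two finite sums and applying the substitution $s=27t^{-1}$, which is a bijection of $\F_p^*\setminus\{27\}$ onto $\F_p^*\setminus\{1\}=\{2,3,\dots,p-1\}$, the inner sum over $t$ becomes $\sum_{t=2}^{p-1}\Fha{\rho}{\rho^2}{\epsilon}{t}_{p^{k-2-2i}}$ after renaming the summation variable. This is precisely the asserted identity, with $\gamma_k(p)$ and the trailing $-2$ carried over unchanged from Theorem \ref{tracefor3}.

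I do not expect a serious obstacle, since the computation is essentially bookkeeping; the steps that deserve care are (i) the factorization $\Delta(E_t)=t^8(t-27)$, which fixes the bad locus as $\{0,27\}$ and hence the count of $p-2$ good fibers; (ii) the matching of the index sets on the two sides, namely that the fiber $t=27$ omitted in Theorem \ref{tracefor3} is sent by $s=27t^{-1}$ exactly to the value $s=1$ absent from $\sum_{t=2}^{p-1}$; and (iii) confirming $p^{k-2-2i}\equiv1\pmod3$ for all $i$ in range, so that Theorem \ref{hyper} may legitimately be invoked term by term.
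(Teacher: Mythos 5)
Your proposal is correct and follows essentially the same route as the paper's own proof: begin with Theorem \ref{tracefor3}, iterate the relation $a_{p^{m}}(E)=t_{p^{m}}(E)+p\,a_{p^{m-2}}(E)$ down to $a_{p^0}(E)=1$ so that the constants collect into $-(p-2)p^{k/2-1}$, and then apply Theorem \ref{hyper} to each trace $t_{p^{k-2-2i}}(E_t)$. The only additions beyond the paper's terse write-up are your explicit checks that the bad fibers are $t\in\{0,27\}$ (giving the count $p-2$) and that the substitution $t\mapsto 27t^{-1}$ reindexes the inner sum as $\sum_{t=2}^{p-1}$, both of which are correct and merely make explicit what the paper leaves implicit.
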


\begin{rem} Because the weight $k$ is even, each $q=p^{k-2-2i}$ automatically satisfies $q\equiv 1 \pmod{3}$, and so Theorem \ref{hyper} can be used in the expression for $\tr_k(\Gamma_0(3),p)$ for all $p\neq 3$.
\end{rem}
\begin{rem}  The function $\gamma_k(p)$ can also be expressed in terms of Gaussian hypergeometric functions as
$$\gamma_k(p)=\left\{
                \begin{array}{ll}
                  -\sum_{i=0 \  3|(k-2-2i) }^{k/2-2}p^{k-2-i} \cdot \Fha{\rho}{\rho^2}{\epsilon}{9\cdot8^{-1}}_{p^{k-2-2i}}+p^{k/2-1} & \hbox{if $p\equiv 1 \pmod{3}$} \\
                  (-p)^{k/2-1} & \hbox{if $p\equiv 2 \pmod{3}$}
                \end{array}
              \right.
$$
and so the trace formula in Corollary \ref{cor1} can be expressed entirely in terms of such functions.
\end{rem}

 One can also use these results to prove  ``inductive trace formulas" as in \cite{fuselier,frech}. Theorem \ref{tracefor3} is particularly well suited for this kind of expression.  A straightforward consequence of Theorem \ref{tracefor3} is the following theorem.

\begin{thm}\label{tracefor3inductivebetter} The trace formula for $p\neq 3$ and $k\geq 6$ even may be written as
$$\tr_k(\Gamma_0(3),p)=p^{k-2}\sum_{t=2}^{p-1}\Fha{\rho}{\rho^2}{\epsilon}{t}_{p^{k-2}}+ p \cdot \tr_{k-2}(\Gamma_0(3),p)+2p-2-\beta_k(p),$$
where
\begin{equation}
\beta_k(p):=\left\{
              \begin{array}{ll}
                0 & \hbox{if $p\equiv 1\pmod{3}$ and $k\equiv 0,1 \pmod{3}$} \\
                -p^{k-2}\cdot \Fha{\rho}{\rho^2}{\epsilon}{9\cdot 8^{-1}}_{p^{k-2}} & \hbox{if $p\equiv 1 \pmod{3}$ and $k\equiv 2 \ \ \pmod{3}$}\\
                2(-p)^{k/2-1} & \hbox{if $p\equiv 2\pmod{3}$.}
              \end{array}
            \right.
\end{equation}
\end{thm}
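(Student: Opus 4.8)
The plan is to obtain the inductive formula from the closed formula of Theorem~\ref{tracefor3} by a single telescoping step, in the spirit of the inductive trace formulas in \cite{fuselier,frech}. Put $\Sigma_k:=\sum_{t\in\F_p,\,\Delta(E_t)\neq 0}a_{p^{k-2}}(E_t)$, so that Theorem~\ref{tracefor3} reads $\tr_k(\Gamma_0(3),p)=-\Sigma_k-\gamma_k(p)-2$ for every even $k\geq 4$. Applying this with $k$ replaced by $k-2$ (legitimate since $k\geq 6$), multiplying by $p$, and subtracting gives
\[
\tr_k(\Gamma_0(3),p)-p\cdot\tr_{k-2}(\Gamma_0(3),p)=-\bigl(\Sigma_k-p\,\Sigma_{k-2}\bigr)-\bigl(\gamma_k(p)-p\,\gamma_{k-2}(p)\bigr)+2p-2 .
\]
By the relation $t_{p^{k-2}}(E_t)=a_{p^{k-2}}(E_t)-p\cdot a_{p^{k-4}}(E_t)$ used for Corollary~\ref{cor1}, the combination $\Sigma_k-p\,\Sigma_{k-2}$ is exactly $\sum_t t_{p^{k-2}}(E_t)$, the sum of the Frobenius traces over $\F_{p^{k-2}}$ of the curves $E_t=E_{t,t^2}$.

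Next I would feed this into Theorem~\ref{hyper}. A computation of the discriminant of $E_{a_1,a_3}$ gives $\Delta(E_t)=t^8(t-27)$, so the index set is $\{t\in\F_p:\Delta(E_t)\neq 0\}=\F_p^*\setminus\{27\}$; on it $p\nmid a_1=t$, and since $k$ is even we have $q=p^{k-2}\equiv 1\pmod 3$. Hence (lifting each $t$ to an integer of good reduction, as is implicit already in Theorem~\ref{tracefor3}) Theorem~\ref{hyper} applies to every $E_t$ that occurs and yields
\[
t_{p^{k-2}}(E_t)=-p^{k-2}\cdot\Fha{\rho}{\rho^2}{\epsilon}{27\,t^{-3}t^2}_{p^{k-2}}=-p^{k-2}\cdot\Fha{\rho}{\rho^2}{\epsilon}{27/t}_{p^{k-2}} .
\]
Running $t$ over $\F_p^*\setminus\{27\}$ is the same as running $u=27/t$ over $\F_p^*\setminus\{1\}=\{2,3,\dots,p-1\}$, so $\Sigma_k-p\,\Sigma_{k-2}=-p^{k-2}\sum_{t=2}^{p-1}\Fha{\rho}{\rho^2}{\epsilon}{t}_{p^{k-2}}$. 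Substituting this back into the boxed identity shows the statement holds with the claimed $\Fh$-term and with $\beta_k(p)=\gamma_k(p)-p\,\gamma_{k-2}(p)$; everything then reduces to evaluating this difference.

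Finally I would compute $\gamma_k(p)-p\,\gamma_{k-2}(p)$ in the two residue classes of $p$. For $p\equiv 2\pmod 3$ it is immediate from the definition of $\gamma_k$: $(-p)^{k/2-1}-p\,(-p)^{k/2-2}=2(-p)^{k/2-1}$. For $p\equiv 1\pmod 3$ I would use the hypergeometric form of $\gamma_k(p)$ recorded in the Remark after Corollary~\ref{cor1}, namely $\gamma_k(p)=-\sum p^{k-2-i}\,\Fha{\rho}{\rho^2}{\epsilon}{9\cdot 8^{-1}}_{p^{k-2-2i}}+p^{k/2-1}$, the sum being over $0\leq i\leq k/2-2$ with $3\mid(k-2-2i)$; writing the analogous expression for $\gamma_{k-2}(p)$, multiplying by $p$, and re-indexing $i\mapsto i+1$, the $p^{k/2-1}$ terms cancel and the two sums telescope, leaving only the $i=0$ term of the $\gamma_k$ sum, which is present exactly when $3\mid(k-2)$, i.e. $k\equiv 2\pmod 3$. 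This gives $\gamma_k(p)-p\,\gamma_{k-2}(p)=-p^{k-2}\,\Fha{\rho}{\rho^2}{\epsilon}{9\cdot 8^{-1}}_{p^{k-2}}$ for $k\equiv 2\pmod 3$ and $0$ for $k\equiv 0,1\pmod 3$, which is exactly $\beta_k(p)$. (If one prefers not to invoke that Remark, the same Euler-factor relation gives $\gamma_k(p)-p\,\gamma_{k-2}(p)=\tfrac13\sum_{i=1}^{3}t_{p^{k-2}}(E_{0,\alpha^i})$, and a direct Jacobi-sum evaluation of the traces of these three $j$-invariant-zero curves---noting that a cubic character of $\F_{p^{k-2}}^*$ restricts nontrivially to $\F_p^*$ precisely when $3\nmid k-2$---reproduces the same three cases.)

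The argument is entirely formal once Theorems~\ref{hyper} and~\ref{tracefor3} are in hand, so the ``main obstacle'' is only bookkeeping: correctly matching the reparametrized argument $u=27/t$ to the range $2\leq t\leq p-1$ that appears in the statement, and carrying the telescoping of the $\gamma$-sums together with the divisibility constraint $3\mid(k-2-2i)$ without an off-by-one slip. There is no analytic content beyond what is already proved; this is, as advertised, a straightforward consequence of Theorem~\ref{tracefor3}.
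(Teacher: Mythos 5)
Your proposal is correct and follows essentially the same route as the paper: the paper likewise substitutes the relation $a_{p^{k-2}}=t_{p^{k-2}}+p\,a_{p^{k-4}}$ into Theorem \ref{tracefor3}, recognizes $p\cdot\tr_{k-2}(\Gamma_0(3),p)$, applies Theorem \ref{hyper} with the reparametrization $t\mapsto 27/t$, and evaluates the remaining $j=0$ contribution $\tfrac13\sum_i t_{p^{k-2}}(E_{0,\alpha^i})$ (your Remark/telescoping step is exactly the content of the paper's Lemma \ref{beta}). Your explicit treatment of $p\equiv 2\pmod 3$ and the $\beta_k(p)=\gamma_k(p)-p\,\gamma_{k-2}(p)$ bookkeeping check out.
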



Many of the same methods used in the $\Gamma_0(3)$ case may be adapted to prove trace formulas for $\Gamma_0(9)$ as well.  We  discuss this in Section \ref{sec:level9} and present a number of formulas for the trace in forms like those above. As an example, we have the following inductive formula.

\begin{thm}\label{tracefor9inductive} Let $k\geq 4$ and $p\equiv 1 \pmod{3}$. Then the trace is given by
\begin{align*}
\tr_k(\Gamma_0(9),p)=p^{k-2}\sum_{t=2 \atop t^3\neq 1}^{p-1}\Fha{\rho}{\rho^2}{\epsilon}{t^3}_{p^{k-2}}+p^{k-2}\Fha{\rho}{\rho^2}{\epsilon}{9 \cdot 8^{-1}}_{p^{k-2}}\\
-4+4p-\delta(k-2)p(p+1)+p\cdot \tr_{k-2}(\Gamma_0(9),p),
\end{align*}
where $\delta(k)=1$ if $k=2$ and 0 otherwise. When $p\equiv 2 \pmod{3}$, we have $\tr_k(\Gamma_0(9),p)=\tr_k(\Gamma_0(3),p)$.
\end{thm}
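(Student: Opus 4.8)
The plan is to follow the same path used for $\Gamma_0(3)$ in Theorems \ref{tracefor3} and \ref{tracefor3inductivebetter}. One first establishes the level-$9$ analogue of Theorem \ref{tracefor3} (the non-inductive formula proved earlier in this section): it writes $\tr_k(\Gamma_0(9),p)$ as $-\sum_E w(E)\,a_{p^{k-2}}(E)$ together with explicit elementary and CM correction terms, where $E$ runs over a one-parameter family of elliptic curves over $\F_p$ carrying the relevant level-$9$ structure (morally, curves $E_{a_1,a_3}$ whose canonical $3$-isogenous curve again has a rational $3$-torsion point, which forces $27a_1^{-3}a_3$ to be a cube), and $w(E)$ counts how many curves isogenous to $E$ carry that structure. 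That formula is obtained by inserting the parametrization of curves with $3$-torsion, suitably refined, into the Eichler--Selberg trace formula and recognizing the Hurwitz class-number contributions as point counts on this family; this identification is the substantive part of the argument.

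Granting the non-inductive formula, the passage to the inductive shape is formal, exactly as for level $3$. For a curve with good reduction at $p$ the local factor $1-a_p(E)T+pT^2$ yields the identity $a_{p^{k-2}}(E)=t_{p^{k-2}}(E)+p\,a_{p^{k-4}}(E)$, where $t_{p^{k-2}}(E)$ is the trace of the $p^{k-2}$-power Frobenius. Substituting this splits $-\sum_E w(E)\,a_{p^{k-2}}(E)$ into $-\sum_E w(E)\,t_{p^{k-2}}(E)$ and $-p\sum_E w(E)\,a_{p^{k-4}}(E)$; the second sum, once the weight-$(k-2)$ correction terms are restored, is exactly $p\cdot\tr_{k-2}(\Gamma_0(9),p)$, which is the inductive term in the statement. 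For the first sum, since $k$ is even we have $q=p^{k-2}\equiv1\pmod3$, so Theorem \ref{hyper} converts each $-t_{p^{k-2}}(E)$ into $q\cdot\Fha{\rho}{\rho^2}{\epsilon}{27a_1^{-3}a_3}_{q}$; under the parametrization the argument $27a_1^{-3}a_3$ ranges over the cubes $t^3$ with $t\in\F_p^*$ and $t^3\neq1$, plus the single exceptional value $9\cdot8^{-1}$ attached to the distinguished CM curve in the family, which produces the two hypergeometric sums. Collecting the leftover constants --- the $\gamma$-type terms at weights $k$ and $k-2$, the pieces pulled out of the sum at $t^3=1$, and the exceptional curve --- gives $-4+4p$, while the weight-$2$ anomaly of the Eichler--Selberg formula, active only when the lower weight $k-2$ equals $2$, contributes the extra $-\delta(k-2)p(p+1)$.

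The step I expect to be the genuine obstacle is the one glossed over above: proving the non-inductive formula, i.e. matching the geometric side of the Eichler--Selberg trace formula for $\Gamma_0(9)$ with the weighted curve sum. This requires pinning down exactly which curves over $\F_p$ admit the cyclic $9$-isogeny structure, parametrizing them by a single $t$, and --- most delicately --- counting within each isogeny class how many curves carry that structure, so that the weights $w(E)$ come out with the right multiplicities; one must also isolate the curves with extra automorphisms responsible for the $9\cdot8^{-1}$ term and, at weight $2$, for the $p(p+1)$ term, and check that no curve with $p\mid a_1$ spoils the application of Theorem \ref{hyper}. Finally, for $p\equiv2\pmod3$ there is no character of order three: the refined family collapses onto the level-$3$ family $\{E_t\}$ and every term built from an order-three character drops out, so the formula reduces termwise to that of Theorem \ref{tracefor3}, giving $\tr_k(\Gamma_0(9),p)=\tr_k(\Gamma_0(3),p)$. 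Equivalently, the newforms of level $9$ are twists by the quadratic character $\chi_{-3}$ of newforms of levels $1$ and $3$ together with CM-by-$\Q(\sqrt{-3})$ forms, and at an inert prime the twist contributes $-a_p$ while the CM forms contribute $0$, so the extra Hecke traces cancel.
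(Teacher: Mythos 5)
Your overall architecture does match the paper's: first a non-inductive level-9 formula analogous to Theorem \ref{tracefor3}, then a formal passage to the inductive shape via $a_{p^{k-2}}(E)=t_{p^{k-2}}(E)+p\,a_{p^{k-4}}(E)$ and Theorem \ref{hyper}, and you describe that formal step correctly. But the entire substance of the theorem lies in the step you defer as ``the genuine obstacle,'' and your sketch of how it would go points in the wrong direction, so this is a real gap rather than a compressed proof. In the paper the non-inductive formula (Theorem \ref{tracefor9}) comes from computing Hijikata's weights $c(s,f,9)$ explicitly, showing via Theorem \ref{coxlem} that $\sum_{f\mid t}\hs{(s^2-4p)/f^2}\,c(s,f,9)$ collapses to $12H^*\bigl((s^2-4p)/9\bigr)$ when $3\mid t$ and to $0$ otherwise, and then applying Schoof's theorem: the curves counted are exactly those with full rational $3$-torsion, $E(\F_p)[3]\cong\Z/3\Z\times\Z/3\Z$, i.e.\ the curves $E_t$ with $t^3-27t^2$ a cube (plus the $j=0,1728$ classes), handled with the same parametrization and preimage counts already established in Section \ref{sec:level3}. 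This is not the condition you posit: for $E_t$ one has $27a_1^{-3}a_3=27/t$, so ``$27a_1^{-3}a_3$ a cube'' means $t$ is a cube, a different subset of curves from ``$t^2(t-27)$ a cube''; moreover the exceptional argument $9\cdot 8^{-1}$ (the $j=0$ curve $E_{24}$) is in general not a cube, so the cube-shaped arguments $t^3$ in the final formula cannot be explained by the level structure forcing the hypergeometric argument to be a cube. They arise instead from a re-indexing $t\mapsto 27/t$ followed by Greene's transformation $x\mapsto 1-x$ (Theorem 4.4 of \cite{greene}), which converts the condition ``$1-t$ a cube'' into ``$t$ a cube'' and absorbs the multiplicity $3$ into a sum over cube roots. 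Without the correct identification of the counted curves and this transformation, neither the two hypergeometric sums nor the constants $-4+4p$ can be produced.

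On the final assertion for $p\equiv 2\pmod 3$: your first justification (``every term built from an order-three character drops out, so the formula reduces termwise'') is circular, since the claim concerns Hecke traces and not a hypergeometric expression that only exists when cube characters do. The paper's proof is a one-line observation: $p\equiv 2\pmod 3$ forces $3\nmid t$, hence $\ord_3 f=\ord_3 t$ and $c(s,f,9)=1+\leg{D}{3}=c(s,f,3)$, so the Hijikata formulas for levels $9$ and $3$ (indeed $3^m$ for all $m$) coincide term by term. Your alternative argument via newform theory --- that level-9 newforms are $\chi_{-3}$-twists of level-1 and level-3 newforms together with forms having CM by $\Q(\sqrt{-3})$, whose traces at inert primes cancel against the oldform multiplicities --- is a genuinely different and workable route, but it rests on a classification of level-9 newforms that you assert without proof, so as written it too remains a sketch.
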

In fact, when $p\equiv 2 \pmod{3}$, we will see that $\tr_k(\Gamma_0(3^m),p)=\tr_k(\Gamma_0(3),p)$ for every $m$.

Let $q=e^{2\pi i z}$ and recall that the Dedekind eta function is defined to be
$$\eta(z)=q^{\frac{1}{24}}\prod_{n=1}^{\infty}(1-q^n).$$
Then $\eta(3z)^8$ is the unique normalized Hecke eigenform in $S_4(\Gamma_0(9))$ and  we write its Fourier expansion as
$$\eta(3z)^8=\sum b(n) q^n.$$
We will show using trace formula results such as Theorem \ref{tracefor9inductive} that the Fourier coefficients  of $\eta(3z)^8$ are given by the following simple expression.
\begin{cor}\label{fouriercoeff}
Let $p\equiv 1\pmod{3}$, and let $\rho\in \hat{\F}_p^*$ be a character of order three. The $p$th Fourier coefficient  of $\eta(3z)^8$ is given by
$$b(p)=-p^3\left(\bin{\rho^2}{\rho}^3+\bin{\rho}{\rho^2}^3\right)=-p^3\Fha{\rho}{\rho^2}{\epsilon}{9\cdot8^{-1}}_{p^3}.$$
When $p\equiv 2\pmod{3}$, $b(p)=0$.
\end{cor}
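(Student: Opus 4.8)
The plan is to identify $b(p)$ with a Hecke trace and feed it into the level-$9$ trace formulas of Section \ref{sec:level9}. Since $S_4(\Gamma_0(1))=S_4(\Gamma_0(3))=0$, the space $S_4(\Gamma_0(9))$ has no oldforms, so it is one--dimensional and spanned by the normalized eigenform $\eta(3z)^8$; hence $T_4(p)$ acts on it as multiplication by $b(p)$ and $\tr_4(\Gamma_0(9),p)=b(p)$ for every prime $p\neq 3$. If $p\equiv 2\pmod 3$, the last sentence of Theorem \ref{tracefor9inductive} gives $\tr_4(\Gamma_0(9),p)=\tr_4(\Gamma_0(3),p)$, which vanishes because $S_4(\Gamma_0(3))=0$; this is the second assertion of the corollary.

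For $p\equiv 1\pmod 3$ I would specialize Theorem \ref{tracefor9inductive} to $k=4$. Then $\delta(k-2)=\delta(2)=1$, and $\tr_2(\Gamma_0(9),p)=0$ since $X_0(9)$ has genus $0$, so the recursion degenerates to
\begin{equation*}
b(p)=p^{2}\sum_{t=2,\ t^{3}\neq 1}^{p-1}\Fha{\rho}{\rho^2}{\epsilon}{t^{3}}_{p^{2}}+p^{2}\,\Fha{\rho}{\rho^2}{\epsilon}{9\cdot 8^{-1}}_{p^{2}}-p^{2}+3p-4 .
\end{equation*}
It remains to put this in closed form. For $s\in\F_p\setminus\{0,1\}$, Theorem \ref{hyper} gives $\Fha{\rho}{\rho^2}{\epsilon}{s}_{p^2}=-p^{-2}t_{p^{2}}(E_{3,s})$, where $E_{3,s}\colon y^{2}+3xy+sy=x^{3}$ satisfies $27\cdot 3^{-3}s=s$ and has good reduction for $s\neq 0,1$. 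Writing $\sum_{t\in\F_p}\phi(t^{3})=\sum_{s\in\F_p^{*}}\bigl(1+\rho_0(s)+\rho_0^{2}(s)\bigr)\phi(s)$ for a cubic character $\rho_0$ of $\F_p^{*}$, and peeling off the cube roots of unity, the $t$--sum collapses to $\sum_{t}\Fha{\rho}{\rho^2}{\epsilon}{t^{3}}_{p^{2}}=-p^{-2}\sum_{s\in\F_p\setminus\{0,1\}}\bigl(1+\rho_0(s)+\rho_0^{2}(s)\bigr)t_{p^{2}}(E_{3,s})$, the singular fibre $s=1$ dropping out (so the special value $\Fha{\rho}{\rho^2}{\epsilon}{1}$ is not needed).

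The computational heart is to evaluate the three twisted sums $\sum_{s}\psi(s)\,t_{p^{2}}(E_{3,s})$ for $\psi\in\{\epsilon,\rho_0,\rho_0^{2}\}$. Using $t_{p^{2}}(E)=t_{p}(E)^{2}-2p$ these become twisted second moments of traces of Frobenius over $\F_p$ of the family $\{E_{3,s}\}$, a one--parameter family of elliptic curves each carrying a rational point of order $3$; by the Eichler--Selberg trace formula (or by counting points on the associated surfaces directly) they reduce to explicit polynomials in $p$ plus, possibly, a bounded multiple of $\tr_4$ at a level dividing $9$, and since $S_4(\Gamma_0(3))=0$ while $S_4(\Gamma_0(9))$ is spanned by $\eta(3z)^8$ one obtains either a polynomial outright or a linear relation for $b(p)$. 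Assembling these with the separate term $\Fha{\rho}{\rho^2}{\epsilon}{9\cdot 8^{-1}}_{p^{2}}$---which by Theorem \ref{hyper} is $-p^{-2}t_{p^{2}}$ of the curve with $j=0$---and applying standard Jacobi sum identities, the polynomial terms cancel and one is left with $-p^{3}\bigl(\bin{\rho^{2}}{\rho}^{3}+\bin{\rho}{\rho^2}^{3}\bigr)$, the Jacobi sums now taken over $\F_p$. Finally, to recognize this as $-p^{3}\Fha{\rho}{\rho^2}{\epsilon}{9\cdot 8^{-1}}_{p^{3}}$, I would apply Theorem \ref{hyper} over $\F_{p^{3}}$ to the curve $E_{a_1,a_3}$ with $27a_1^{-3}a_3=9\cdot 8^{-1}$---precisely the curve with $j=0$, so that its $p^{3}$--Frobenius is the cube of its $p$--Frobenius---together with the Hasse--Davenport relation, which expresses the cube of a Jacobi sum over $\F_p$ as the corresponding Jacobi sum over $\F_{p^{3}}$.

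I expect the main obstacle to be the bookkeeping in the third step: carrying out the twisted character--sum evaluations while correctly accounting for the degenerate fibres $s\in\{0,1\}$ and the $j=0$ fibre, fixing the cubic character $\rho$ and its norm lifts consistently over $\F_p$, $\F_{p^{2}}$ and $\F_{p^{3}}$, and pinning down the sign---equivalently, the quadratic twist that actually occurs---in the complex--multiplication point count. Everything else is forced: both sides are bounded by $2p^{3/2}$ by Deligne, so once the leading terms agree the identity must hold exactly.
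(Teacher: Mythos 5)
Your framing agrees with the paper's: identify $b(p)$ with $\tr_4(\Gamma_0(9),p)$ via one-dimensionality of $S_4(\Gamma_0(9))$, dispose of $p\equiv 2\pmod 3$ through $\tr_4(\Gamma_0(9),p)=\tr_4(\Gamma_0(3),p)=0$, and specialize the level-$9$ trace formula to $k=4$; your displayed reduction is correct. The gap is in what you call the computational heart. The sum over cubes of $t_{p^2}(E_{3,s})$ (equivalently of $\Fha{\rho}{\rho^2}{\epsilon}{t^3}_{p^2}$) is \emph{precisely} the quantity that the level-$9$ trace formula converts into $b(p)$ plus elementary terms, so evaluating it ``by the Eichler--Selberg trace formula'' at a level dividing $9$ only returns the tautology $b(p)=b(p)$; and the untwisted piece of your decomposition, handled through level $3$, contributes only polynomial terms. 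The twisted second moments $\sum_s\rho_0(s)\,t_p(E_{3,s})^2$ and $\sum_s\rho_0^2(s)\,t_p(E_{3,s})^2$ are \emph{not} traces of Hecke operators on $S_4(\Gamma_0(3))$ or $S_4(\Gamma_0(9))$ with trivial character (a cubic twist of the parameter leaves the world of these trace formulas), and it is exactly these sums that produce the non-polynomial terms $\bin{\rho^2}{\rho}^3+\bin{\rho}{\rho^2}^3$. The paper evaluates them directly: using Greene's Definition 3.5/Theorem 3.6 and the inductive Theorem 3.13 it rewrites $\sum_t\rho^j(t)\Fha{\rho}{\rho^2}{\epsilon}{t}_p^2$ as special values of ${}_4F_3(1)$, then reduces with Greene's identity 2.15 and Theorem 4.35 to Jacobi sums, and finally inserts the Gauss-sum evaluation $c=-p\bigl(\bin{\rho}{\rho^2}+\bin{\rho^2}{\rho}\bigr)$ for the $j=0$ curve. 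Nothing in your proposal substitutes for this character-sum computation, so the first equality of the corollary is not actually established.

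Two further points. Your closing remark that ``both sides are bounded by $2p^{3/2}$, so once the leading terms agree the identity must hold exactly'' is not a valid argument: two integers each of size at most $2p^{3/2}$ can differ by as much as $4p^{3/2}$, so agreement of leading terms forces nothing. Your last step, identifying $-p^3\bigl(\bin{\rho^2}{\rho}^3+\bin{\rho}{\rho^2}^3\bigr)$ with $-p^3\Fha{\rho}{\rho^2}{\epsilon}{9\cdot 8^{-1}}_{p^3}$, is workable: the paper does it more simply by noting that $\alpha=-p\bin{\rho^2}{\rho}$ and $\bar\alpha$ are the Frobenius eigenvalues of $E\colon y^2+y=x^3$, so the displayed quantity is $\alpha^3+\bar\alpha^3=t_{p^3}(E)$, to which Theorem \ref{hyper} over $\F_{p^3}$ applies directly; your Hasse--Davenport lifting route would also need the eigenvalue identification, i.e.\ the same Gauss-sum evaluation of $t_p(E)$ that is missing from your main computation.
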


In addition, let $V$ be the threefold defined by the following equation:
\begin{equation}\label{eq:threefold}
x^3=y_1y_2y_3(y_1+1)(y_2+1)(y_3+1)
\end{equation}
and let $N(V,p)$ denote the number of $\bar{\F}_p$-points on $V$. Then one can use the results above to show that $V$ is ``modular" in the sense that $N(V,p)$ relates to the Fourier coefficients of $\eta(3z)^8$ by the following expression:
$$b(p)=p^3+3p^2+1-N(V,p).$$

We begin in Section \ref{sec:trace} by stating Hijikata's version of the Eichler-Selberg trace formula for Hecke operators on $S_k(\Gamma_0(\ell))$ where $\ell$ is prime, and then work to simplify this formula into an expression in terms of the number of isomorphism classes of elliptic curves in different isogeny classes. This expression will hold whenever $p\equiv 1 \pmod{\ell}$ or $\leg{p}{\ell}=-1$. We then specialize this formula further to the case where $\ell=3$ in Section \ref{sec:level3} and prove Theorem \ref{tracefor3}.  At the  end of this section we will derive other expressions for the trace on this space, such as Corollary \ref{cor1} and the inductive trace formula in Theorem \ref{tracefor3inductivebetter}. In Section \ref{sec:level9} we  show how methods similar to those in Section \ref{sec:level3} can be used to prove results when $\ell=9$, such as  Theorems \ref{tracefor9} and \ref{tracefor9inductive}. We then  use  these trace formulas to prove Corollary \ref{fouriercoeff}, an explicit expression for the Fourier coefficients of a weight four modular form.  Using this, we show in Section \ref{sec:modularthreefold} that the number of points on the threefold given by equation (\ref{eq:threefold}) can be expressed in terms of the Fourier coefficients of the same modular form.  Finally, in Section \ref{sec:gh}, we prove Theorem \ref{hyper}, an expression for traces of Frobenius of certain elliptic curves in  terms of hypergeometric functions.

While simplifying the expression for $\tr_k(\Gamma_0(3),p)$, we use theorems of Schoof to rewrite sums of class numbers which comes up in the expression in terms of the number of isomorphism  classes of elliptic curves. These theorems only hold when $p, \ell$ satisfy certain congruence properties. Although it is possible when $\ell=3,9$ to reduce the trace formula expression for all values of $p$, this seems to pose a real difficulty for proving trace formulas for general $p, \ell$.

\section{Trace formulas}\label{sec:trace}
\subsection{Hijikata's trace formula}
Let $p,\ell$ be distinct odd primes, and let $k\geq2$ be even. We will specialize the trace formula given by Hijikata in \cite{hij1}  to the case where $T_k(p)$ acts on $S_k(\Gamma_0(\ell))$.  Some preliminary notation is necessary to state the theorem.

For each $s$ in the range $0<s<2\sqrt{p}$, let $t>0,D$ be the unique integers satisfying
\begin{equation}
s^2-4p=t^2D
\end{equation}
 and $D$ is a fundamental discriminant of an imaginary quadratic field. Additionally, for any $d<0$, $d\equiv 0,1 \pmod{4}$, write $h(d):=h(\mathcal{O})$ for the class number of the order $\mathcal{O}\subset \Q(\sqrt{d})$ of discriminant $d$, and  write $\omega(d):=\frac{1}{2}|\mathcal{O}^*|$ for one half of the number of units in that order.  Set $h^*(d):=h(d)/\omega(d)$.

Define  the polynomial $\Phi(X):=X^2-sX+p$ and let $x, y$ be the complex roots of $\Phi(X)$. Define
\begin{equation}
G_{k}(s,p)=:\frac{x^{k-1}-y^{k-1}}{x-y}.
\end{equation}
One can verify that when $k$ is even  $G_{k}(s,p)$ can be alternately expressed as
\begin{equation}\label{g}
G_k(s,p)=\sum_{j=0}^{k/2-1}(-1)^j\bin{k-2-j}{j}p^js^{k-2j-2}.
\end{equation}
Finally, define a function $c(s,f,\ell)$:
\begin{equation}\label{c}
c(s,f,\ell)=\left\{
                        \begin{array}{ll}
                          1+\leg{D}{\ell} & \hbox{if $\ord_\ell(f)=\ord_\ell(t)$,} \\
                          2 & \hbox{if $\ord_\ell(f) < \ord_\ell(t)$.}
                        \end{array}
                      \right.
\end{equation}
Then Hijikata's version of the trace formula yields the following:
\begin{thm}[\cite{hij1}, Theorem 2.2]\label{hijtrace} Let $p, \ell$ be distinct odd primes, and let $k\geq 2$ be even. Then
\begin{equation}\label{traceform}
\tr_k(\Gamma_0(\ell),p)=-\sum_{0<s<2\sqrt{p}}G_k(s,p)\sum_{f|t}\hs{\frac{s^2-4p}{f^2}}c(s,f,\ell)-K(p,\ell)+\delta(k)(1+p)
\end{equation}
where
\begin{eqnarray*}
K(p,\ell)&:=&2+\frac{1}{2}(-p)^{k/2-1}\left(1+\leg{-p}{\ell}\right)H^*(-4p)\\
\delta(k)&:=&\left\{
  \begin{array}{ll}
    1 & \hbox{if $k=2$,} \\
    0 & \hbox{otherwise.}
  \end{array}
\right.
\end{eqnarray*}
\end{thm}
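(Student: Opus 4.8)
Since the statement is attributed to Hijikata \cite{hij1}, the plan is not to build a trace formula from scratch but to specialize Hijikata's general formula for $\tr T_k(n)$ on $S_k(\Gamma_0(N),\chi)$ to the case at hand: $n=p$ and $N=\ell$ distinct odd primes, $\chi$ trivial, and $k\geq 2$ even. Hijikata's formula is the sum of four pieces: an \emph{elliptic} term, a sum over integers $s$ with $s^2<4n$ weighted by a polynomial in $s$ and $n$ and by a sum (over admissible conductors $f$) of normalized class numbers $h^*$ of orders in $\Q(\sqrt{s^2-4n})$, each multiplied by a local factor at the primes dividing $N$; a \emph{hyperbolic} term, a sum over divisors $d\mid n$ with $0<d<\sqrt n$, the divisor $d=\sqrt n$ being counted with weight $\tfrac12$ when $n$ is a perfect square; an \emph{identity} term, nonzero only when $n$ is a perfect square; and a \emph{parabolic} correction, nonzero only when $k=2$. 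First I would substitute this data and note that $G_k(s,p)$, defined via the roots of $\Phi(X)=X^2-sX+p$, coincides with Hijikata's weighting polynomial, and that formula (\ref{g}) makes manifest that $G_k(s,p)$ is an even function of $s$ when $k$ is even.

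For the elliptic term I would use the symmetry $s\mapsto -s$ — under which both $G_k(s,p)$ and the inner class-number sum are invariant — to replace the sum over all $s$ with $s^2<4p$ by $\sum_{0<s<2\sqrt p}$ together with a half-weighted $s=0$ contribution; since $p$ is prime it is never a perfect square, so the range is exactly the open interval $0<s<2\sqrt p$. The key local input is to match Hijikata's factor at $\ell$ with $c(s,f,\ell)$ of (\ref{c}): writing $s^2-4p=t^2D$ with $D$ a fundamental discriminant and $f\mid t$, the order of conductor $t/f$ in $\Q(\sqrt D)$ embeds into an Eichler order of level $\ell$, locally at $\ell$, in $1+\leg{D}{\ell}$ ways when $\ord_\ell(f)=\ord_\ell(t)$ and in $2$ ways when $\ord_\ell(f)<\ord_\ell(t)$. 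The $s=0$ term has $s^2-4p=-4p$, $G_k(0,p)=(-p)^{k/2-1}$, inner sum $\sum_{f\mid t}\hs{-4p/f^2}=H^*(-4p)$, and local factor $1+\leg{-p}{\ell}$ (note $\leg{-4p}{\ell}=\leg{-p}{\ell}$), so it contributes $-\tfrac12(-p)^{k/2-1}\left(1+\leg{-p}{\ell}\right)H^*(-4p)$, which is precisely the class-number part of $-K(p,\ell)$.

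For the hyperbolic term, since $p$ is prime with $\gcd(p,\ell)=1$ the only divisor $d$ of $p$ in the range $0<d<\sqrt p$ is $d=1$, there is no half-weighted middle divisor, and $\min(1,p)^{k-1}=1$; the local ($\ell$-adic) factor in Hijikata's hyperbolic term, which for prime level has value $2$ (the two cusps $0,\infty$ of $\Gamma_0(\ell)$), turns this into the constant $-2$, independent of $k$ — the remaining part of $-K(p,\ell)$. The identity term is $0$ because $p$ is not a square, and the $k=2$ parabolic correction equals $\sigma_1(p)=1+p$, i.e. $\delta(k)(1+p)$. Assembling the elliptic sum, $-K(p,\ell)$, and $\delta(k)(1+p)$ yields (\ref{traceform}).

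The step I expect to be the main obstacle is bookkeeping rather than conceptual: one must translate Hijikata's conventions — his normalization of class numbers and of the unit count $\omega(d)$, his local embedding/selection factors, and his weighting of boundary terms — into the compact form used here, and track every sign, in particular the sign and the factor $\tfrac12$ attached to the $s=0$ term and the exact value $-2$ of the $d=1$ hyperbolic contribution. Because the result is quoted from \cite{hij1}, what really needs checking is that Hijikata's Theorem 2.2, read under this specialization, is literally (\ref{traceform}); the risk is a transcription error, not a genuine mathematical difficulty.
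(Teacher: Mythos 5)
The paper does not prove this statement at all: it is quoted verbatim from Hijikata--Pizer--Shemanske \cite{hij1} and used as a black box, so there is no internal argument to compare yours against. Your plan --- specialize Hijikata's general formula for $\tr T_k(n)$ on $S_k(\Gamma_0(N),\chi)$ to $n=p$, $N=\ell$, $\chi$ trivial, $k$ even --- is the right (indeed the only) way to verify the citation, and the individual checks you make are correct: $G_k(s,p)$ is even in $s$ for $k$ even, so the elliptic sum over all $s^2<4p$ collapses to $\sum_{0<s<2\sqrt p}$ plus a half-weighted $s=0$ term; $G_k(0,p)=(-p)^{k/2-1}$ and $c(0,f,\ell)=1+\leg{-p}{\ell}$ for every $f\mid t$ (since $\ell$ is odd), so the $s=0$ term is exactly the class-number part of $-K(p,\ell)$; the hyperbolic term for $n=p$ reduces to the constant $-2$; the identity term vanishes because $p$ is not a square; and the $k=2$ correction is $\sigma_1(p)=p+1=\delta(k)(1+p)$. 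The one step you assert rather than carry out is the translation of Hijikata's local data into the closed form (\ref{c}): in \cite{hij1} the factor at $\ell$ is given as a count of solutions of $x^2-sx+p\equiv 0$ modulo powers of $\ell$ (equivalently, optimal embedding numbers of the order of conductor $t/f$ into an Eichler order of level $\ell$), and one must check by a short Hensel-lifting argument that for $\ell\nmid p$ this count equals $1+\leg{D}{\ell}$ when $\ord_\ell(f)=\ord_\ell(t)$ and $2$ when $\ord_\ell(f)<\ord_\ell(t)$, and likewise that the hyperbolic local count for $(d,d')=(1,p)$ is $2$. You correctly identify this as bookkeeping rather than a conceptual gap, and with that computation supplied your verification is complete.
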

In the following we will often write $H^*(s^2-4p):=\sum_{f|t}\hs{\frac{s^2-4p}{f^2}}$ and $H(s^2-4p):=\sum_{f|t}\h{\frac{s^2-4p}{f^2}}$ for simplicity.


\subsection{Simplifying the formula}
The aim of this section is to rewrite Hijikata's trace formula given in Theorem \ref{hijtrace} in a more convenient form for our purposes by expressing $\tr_k(\Gamma_0(\ell),p)$ in terms of the number of isomorphism classes of elliptic curves with specified torsion. This formula will hold for all $p$ satisfying $p\equiv 1\pmod{\ell}$ or $\leg{p}{\ell}=-1$. In particular, we see that it will hold for all $p\neq 3$ when $\ell=3$.  In the following section we will specialize further to $\ell=3$ to obtain an explicit trace formula.

We begin by eliminating the $c(s,f,\ell)$ term from (\ref{traceform}) Specifically, we show the following:
\begin{lem}\label{removec}
$$\sum_{f|t}\hs{\frac{s^2-4p}{f^2}}c(s,f,\ell)=\left\{
                                                      \begin{array}{ll}
                                                        \left(1+\leg{D}{\ell}\right)H^*(s^2-4p) & \hbox{when $\ell \nmid t$,} \\
                                                        H^*(s^2-4p)+\ell H^*((s^2-4p)/\ell^2)& \hbox{when $\ell | t$.}
                                                      \end{array}
                                                    \right.
$$
\end{lem}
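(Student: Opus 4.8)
The plan is to split into the two stated cases; the first is immediate, and the second I would handle by a divisor-sum reduction followed by induction on $\ord_\ell(t)$, the only genuine input being the standard conductor formula for class numbers of imaginary quadratic orders.

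\textbf{Case $\ell\nmid t$.} Here every $f\mid t$ has $\ord_\ell(f)=0=\ord_\ell(t)$, so $c(s,f,\ell)=1+\leg{D}{\ell}$ for all such $f$; pulling this constant out of the sum leaves $(1+\leg{D}{\ell})\sum_{f\mid t}\hs{(s^2-4p)/f^2}=(1+\leg{D}{\ell})H^*(s^2-4p)$, as claimed.

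\textbf{Case $\ell\mid t$.} Write $v=\ord_\ell(t)\ge1$, $N=s^2-4p=t^2D$, and let $L(N)$ denote the left-hand side. I would first split the sum by whether $\ell\mid f$. The divisors with $\ell\nmid f$ all carry weight $c(s,f,\ell)=2$ and, after reindexing the remaining divisors via $f\mapsto\ell f$, contribute $2\bigl(H^*(N)-H^*(N/\ell^2)\bigr)$. For $f=\ell g$ with $g\mid t/\ell$ one sees that $c(s,\ell g,\ell)$ relative to $t$ coincides with $c(s,g,\ell)$ relative to $t/\ell$ (both compare $\ord_\ell g$ to $v-1$, and the fundamental discriminant $D$ of $N/\ell^2=(t/\ell)^2D$ is unchanged), so these terms reassemble into $L(N/\ell^2)$. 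This yields the recursion $L(N)=2\bigl(H^*(N)-H^*(N/\ell^2)\bigr)+L(N/\ell^2)$, and I would finish by induction on $v$. The base case $v=1$ is a short computation using $\hs{\ell^2 d}=(\ell-\leg{D}{\ell})\hs{d}$ when $\ell\nmid\mathrm{cond}(d)$, which also gives $H^*(N)=(\ell+1-\leg{D}{\ell})H^*(N/\ell^2)$ there; the inductive step, after substituting the hypothesis for $L(N/\ell^2)$, reduces to the three-term identity $H^*(N)=(1+\ell)H^*(N/\ell^2)-\ell\,H^*(N/\ell^4)$, which follows from the closed form $H^*(\ell^{2j}m^2D)=\bigl(\sum_{d\mid m}\hs{d^2D}\bigr)\bigl(1+(\ell-\leg{D}{\ell})\tfrac{\ell^{j}-1}{\ell-1}\bigr)$ (its $j$-dependence has characteristic roots $1$ and $\ell$).

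The main obstacle is keeping the conductor bookkeeping honest: everything rests on the recursions $\hs{\ell^2 d}=\ell\,\hs{d}$ when $\ell\mid\mathrm{cond}(d)$ and $\hs{\ell^2 d}=(\ell-\leg{D}{\ell})\hs{d}$ otherwise (valid uniformly for negative fundamental $D$, the exceptional unit groups at $D=-3,-4$ being absorbed into $\hs{D}$), together with remembering that $\leg{D}{\ell}$ refers to the fundamental discriminant $D=(s^2-4p)/t^2$, not to $s^2-4p$ itself. A shorter, induction-free alternative would be to group the terms of $L(N)$ by $a=\ord_\ell(f)$: setting $S_a=\sum_{\ord_\ell(f)=a}\hs{N/f^2}$, the recursions above give $S_a=\ell\,S_{a+1}$ for $a\le v-2$ and $S_{v-1}=(\ell-\leg{D}{\ell})S_v$, after which $L(N)=2\sum_{a=0}^{v-1}S_a+(1+\leg{D}{\ell})S_v$ can be matched term by term against $H^*(N)+\ell\,H^*(N/\ell^2)=\sum_{a=0}^{v}S_a+\ell\sum_{a=1}^{v}S_a$.
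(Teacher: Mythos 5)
Your proof is correct, and both of your routes (the induction on $\ord_\ell(t)$ and the stratification by $\ord_\ell(f)$) check out; the only substantive input you use, the conductor formula $h^*(f^2D)=h^*(D)\,f\prod_{p\mid f}\bigl(1-\leg{D}{p}\tfrac1p\bigr)$ with the $D=-3,-4$ unit issue absorbed into $h^*$, is exactly the paper's Theorem \ref{coxlem} (Cox, Cor.\ 7.28). The difference is organizational rather than conceptual: the paper splits the sum into the divisors $f$ with $f\nmid t/\ell$ (weight $1+\leg{D}{\ell}$) and those with $f\mid t/\ell$ (weight $2$), and then applies the Cox formula once, in a single algebraic rearrangement that converts the leftover piece directly into $\ell H^*\bigl((s^2-4p)/\ell^2\bigr)$, with no induction on $\ord_\ell(t)$. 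Your primary argument instead peels off one factor of $\ell$ at a time via $L(N)=2\bigl(H^*(N)-H^*(N/\ell^2)\bigr)+L(N/\ell^2)$, which costs you a base case and the three-term identity $H^*(N)=(1+\ell)H^*(N/\ell^2)-\ell H^*(N/\ell^4)$ (both of which you justify correctly); your ``shorter alternative,'' grouping by $a=\ord_\ell(f)$ with $S_a=\ell S_{a+1}$ for $a\le v-2$ and $S_{v-1}=(\ell-\leg{D}{\ell})S_v$ and matching against $\sum_{a=0}^{v}S_a+\ell\sum_{a=1}^{v}S_a$, is closest in spirit to the paper's one-shot computation and is arguably the cleanest of the three. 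Your insistence on careful conductor bookkeeping is also not wasted: the paper's own displayed manipulation contains sign slips (its last display even ends with $H^*(s^2-4p)-\ell H^*((s^2-4p)/\ell^2)$, contradicting the plus sign in the statement being proved), whereas your valuation-stratified version can be verified term by term.
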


\begin{proof}
Consider first the case where $\ell \nmid  t$. Then $\ord_{\ell}(f)=\ord_{\ell}(t)$ is automatically satisfied, so $c(s,f,\ell)=\left(1+\leg{D}{\ell}\right)$, and the result follows.

When $\ell | t$, we use the following theorem from \cite{cox}:
\begin{thm} [\cite{cox}, Cor. 7.28] \label{coxlem} Let $\mathcal{O}$ be an order of discriminant $d$ in an imaginary quadratic field, and let $\mathcal{O}'\subset \mathcal{O}$ be an order with $[\mathcal{O}: \mathcal{O}']=\iota$. Then
$$h^*(\mathcal{O}')=h^*(\mathcal{O})\cdot \iota \prod_{\ell | \iota, \ \ell \textrm{ prime}}\left(1-\leg{d}{\ell}\frac{1}{\ell}\right).$$
\end{thm}

Substituting the explicit description of $c(s,f,\ell)$ given in  (\ref{c}) and manipulating the terms algebraically gives
{
\begin{eqnarray}\label{ceq}
\sum_{f|t}\hs{\frac{s^2-4p}{f^2}}c(s,f,\ell)&=&\left(1+\leg{D}{\ell}\right)\sum_{f|t, f\nmid t/\ell}\hs{\frac{s^2-4p}{f^2}}+2\sum_{f|t/\ell}\hs{\frac{s^2-4p}{f^2}}\nonumber \\
&=& \sum_{f|t} \hs{\frac{s^2-4p}{f^2}}+\ell\sum_{f|t, f\nmid t/\ell}\hs{\frac{s^2-4p}{f^2}}+\sum_{f|t/\ell}\hs{\frac{s^2-4p}{f^2}} \nonumber\\
&&\ -\ell\left(1-\leg{D}{\ell}\frac{1}{\ell}\right)\sum_{f|t, f\nmid t/\ell}\hs{\frac{s^2-4p}{f^2}}. \label{mid}
\end{eqnarray}
}

By Theorem \ref{coxlem}, the following equality holds
{\small
$$
\ell\left(1-\leg{D}{\ell}\frac{1}{\ell}\right)\sum_{f|t, f\nmid t/\ell}\hs{\frac{s^2-4p}{f^2}}+\ell \sum_{f|t/\ell} \hs{\frac{s^2-4p}{f^2}} =\sum_{f|t} \hs{\ell^2 \frac{s^2-4p}{f^2}}=\sum_{f|t/\ell} \hs{\frac{s^2-4p}{f^2}},$$
}
and so the final term in (\ref{mid}) can therefore be written as
$$ -\ell\left(1-\leg{D}{\ell}\frac{1}{\ell}\right)\sum_{f|t, f\nmid t/\ell}\hs{\frac{s^2-4p}{f^2}}=(\ell-1)\sum_{f|t/\ell}\hs{\frac{s^2-4p}{f^2}},$$
and finally (\ref{mid}) becomes
$$\sum_{f|t}\hs{\frac{s^2-4p}{f^2}}+\ell \sum_{f|t/\ell}\hs{\frac{s^2-4p}{(\ell f)^2}}=H^*(s^2-4p)-\ell H^*((s^2-4p)/(\ell^2))$$

\end{proof}

Using this, the trace formula may be written as
{\small
\begin{eqnarray}
\tr_k(\Gamma_0(\ell),p)&=&-\sum_{0<s<2\sqrt{p}}G_k(s,p)H^*(s^2-4p)\left(1+\leg{s^2-4p}{\ell}\right)\\ \nonumber
&&-\ell\sum_{0<s<2\sqrt{p}, \ell |t}G_k(s,p)H^*\left(\frac{s^2-4p}{\ell^2}\right)- K(p,\ell)+\delta(k)(p+1).
\end{eqnarray}
}
We now rewrite the above equation in terms of the function $H$ instead of $H^*$, so that we may apply Schoof's results counting isomorphism classes of elliptic curves in the next section. Recall that $h^*(d)=h(d)/\omega(d)$, where $\omega(d)=\frac{1}{2}|\mathcal{O}(d)^*|$. Therefore, whenever $d\neq -3,-4$, we have that $h^*(d)=h(d)$. If $s^2-4p=t^2D$ and $D\neq -3,-4$ this implies that $H(s^2-4p)=H^*(s^2-4p)$. If $s^2-4p=-3t^2$, then
\begin{eqnarray*}
H(-3t^2)&=&\sum_{f|t}\h{\frac{-3t^2}{f^2}}=\sum_{f|t, f\neq t}\hs{\frac{-3t^2}{f^2}}+3h^*(-3)\\
&=&\sum_{f|t}\hs{\frac{-3t^2}{f^2}}+2\underbrace{h^*(-3)}_{=1/3}=H^*(-3t^2)+2/3
\end{eqnarray*}
and similarly,  $H(-4t^2)=H^*(-4t^2)+1/2$.

It is left to determine which $s$ satisfy either $s^2-4p=-4t^2$ or $s^2-4p=-3t^2$. By considering the splitting of $p$ in $\Z[i]$ and $\Z\left[\frac{1+\sqrt{-3}}{2}\right]$, we see that the former equality will occur for some $s<2\sqrt{p}$ if and only if $p\equiv 1\pmod{4}$ and the latter if and only if $p\equiv 1 \pmod{3}$. Additionally, by looking at the units in these rings, we see that when $p\equiv 1\pmod{4}$ (respectively $p\equiv 1 \pmod {3}$), there are exactly 2 (resp. 3) values of $s>0$ and $t>0$ for which $s^2-4p=-4t^2$ (resp. $s^2-4p=-3t^2$).

When $p\equiv 1\pmod{4}$ let $a,b$ be positive integers satisfying $p=a^2+b^2$, and similarly when $p\equiv 1 \pmod{3}$, let $c,d$ be positive integers satisfying $p=\frac{c^2+3d^2}{4}$. Then the set of all $(s,t)\in  \mathbb{N}\times \mathbb{N}$ such that $s^2-4p=-4t^2$ is
\begin{equation}
S_4=\{(2a,b),(2b,a)\}
\end{equation}
 and the set of all $(s,t)$ such that $s^2-4p=-3t^2$ is
 \begin{equation}
S_3=\left\{\left( c, d\right),\left(\frac{c+3d}{2},\left| \frac{c-d}{2}\right|\right), \left(\left| \frac{c-3d}{2}\right|,  \frac{c+d}{2}\right)\right\}.
\end{equation}

By a simple congruence argument mod $\ell$, we see that there can be at most one pair $(s,t) \in S_4$ such that $\ell |t$, and similarly for $S_3$. Label the elements of these sets so that in the first case, if $\ell |t$, then $(s,t)=(2a,b)$ and in the second if $\ell |t$ then $(s,t)=(c,d)$.

 Using the solutions in $S_3,S_4$,  we define the following corrective factors
\begin{equation}\label{ep4}
\epsilon_4(p,\ell)=\left\{
                    \begin{array}{ll}
                      \frac{1}{2}(G_k(2a,p)+G_k(2b,p))\left(1+\leg{-4}{\ell}\right) & \hbox{if $p \equiv 1 \pmod{4}$, $\ell \nmid b$,} \\
                      \frac{1}{2}G_k(2b,p)\left(1+\leg{-4}{\ell}\right)+\frac{1}{2}(1+\ell) G_k(2a,p) & \hbox{if $p\equiv 1 \pmod{4}$, $\ell | b$,}\\
                      0 & \hbox{if $p\equiv 3\pmod{4}$}
                    \end{array}
                  \right.
\end{equation}
and
\begin{equation}\label{ep3}
\epsilon_3(p,\ell)=\left\{
                    \begin{array}{ll}
                      \frac{2}{3}\left(G_k(c,p)+G_k(\frac{c+3d}{2},p)+G_k(\frac{c-3d}{2},p)\right)\left(1+\leg{-3}{\ell}\right) & \hbox{if $p \equiv 1\pmod{3}$, $\ell \nmid d$,} \\
                    \frac{2}{3}(G_k(\frac{c+3d}{2},p)+G_k(\frac{c-3d}{2},p))\left(1+\leg{-3}{\ell}\right)
                            +\frac{2}{3}(1+\ell) G_k(c,p) & \hbox{if $p\equiv 1\pmod{3}$, $\ell | d$,}\\
                      0 & \hbox{if $p \equiv 2 \pmod{3}$.}
                    \end{array}
                  \right.
\end{equation}
Using this, the trace formula can be written as
\begin{align}\label{eq1}
\tr_k(\Gamma_0(\ell),p)=&-\sum_{0<s<2\sqrt{p}}G_k(s,p)\left(1+\leg{s^2-4p}{\ell}\right)H(s^2-4p)-\ell \sum_{0<s<2\sqrt{p}, \ell | t}G_k(s,p)H\left(\frac{s^2-4p}{\ell^2}\right)\nonumber\\
&-K(p,\ell)+\ep_4(p,\ell)+\ep_3(p,\ell)+ \delta(k)(p+1).
\end{align}

\subsection{Trace in terms of Elliptic curves}
For an elliptic curve $E$, let $E(\F_p)$ denote the group of $\F_p$-rational points on $E$, and let $E(\F_p)[n]$ denote its $n$-torsion subgroup.
Furthermore, let $\mathcal{I}_p$ denote the set of $\F_p$-isomorphism classes of elliptic curves and write $[E]$ for the isomorphism class containing $E$. Define the sets
 \begin{align*}
I(s)&:=\{\mathcal{C} \in \mathcal{I}_p : \forall E \in \mathcal{C}, |E(\F_p)|=p+1-s\}\\
I_n(s)&:=\{\mathcal{C} \in I(s): \forall E \in \mathcal{C}, \Z/n\Z \subset E(\F_p)[n]\}\\
I_{n\times n}(s)&:=\{\mathcal{C} \in I(s): \forall E \in \mathcal{C}, E(\F_p)[n]\cong \Z/n\Z \times \Z/n\Z\}
\end{align*}
and from these define the quantities $N(s):=|I(s)|$, $N_n(s):=|I_n(s)|$, $N_{n \times n}(s):=|I_{n \times n}(s)|$.

We use the following two theorems of Schoof to rewrite  (\ref{eq1}) in terms  of the above quantities. Although the theorems given in \cite{schoof} hold for curves defined over fields $\F_{p^e}$, we specialize to the prime order case.

\begin{thm}[\cite{schoof}, Thms. 4.6, 4.9] \label{schoof1} Let $s \in \Z$ satisfy $s^2<4p$. Then
$$N(s)=H(s^2-4p).$$
Suppose in addition that $n \in \Z_{\geq 1}$ is odd. Then
 $$N_{n \times n}(s)=\left\{
                       \begin{array}{ll}
                         H\left(\frac{s^2-4p}{n^2}\right) & \hbox{if $p\equiv 1\pmod{n}$ and $s\equiv p+1 \pmod{n^2}$;} \\
                         0 & \hbox{otherwise.}
                       \end{array}
                     \right.
$$
\end{thm}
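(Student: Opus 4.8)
This is Schoof's refinement of the classical Deuring mass formula, and the plan is to prove it via the theory of complex multiplication of elliptic curves over finite fields. Fix $s$ with $s^2<4p$, and treat first the principal case $p\nmid s$ (so $s\neq 0$ once $p>4$), in which every $E/\F_p$ with $|E(\F_p)|=p+1-s$ is ordinary. For such a curve the Frobenius endomorphism $\pi$ satisfies $\pi^2-s\pi+p=0$, so $\Z[\pi]$ is the quadratic order of discriminant $s^2-4p$, i.e. the order of conductor $t$ inside the ring of integers $\mathcal{O}_K$ of $K:=\Q(\sqrt{s^2-4p})$; moreover $\mathrm{End}_{\F_p}(E)=\mathrm{End}_{\overline{\F}_p}(E)$ is an order $\mathcal{O}$ with $\Z[\pi]\subseteq\mathcal{O}\subseteq\mathcal{O}_K$, hence of conductor $f$ dividing $t$, and conversely every such $\mathcal{O}$ occurs. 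Note $p\nmid t$, since $p\mid t$ would force $p\mid s$.

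The first step is to count $\overline{\F}_p$-isomorphism classes. By Deuring's theorem, for each admissible order $\mathcal{O}$ the set of $\overline{\F}_p$-isomorphism classes of elliptic curves with $\mathrm{End}\cong\mathcal{O}$ is a principal homogeneous space under the ideal class group $\mathrm{Cl}(\mathcal{O})$, so there are $h(\mathcal{O})$ of them, with distinct $j$-invariants; since $\pi\in\mathcal{O}$, the main theorem of complex multiplication shows each such $j$ lies in $\F_p$, so the curve descends to $\F_p$. Summing over the orders of conductor $f\mid t$ produces $\sum_{f\mid t}h\!\left(\tfrac{s^2-4p}{f^2}\right)=H(s^2-4p)$ many $j$-invariants of curves with some model over $\F_p$ of Frobenius trace $\pm s$.

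The second step is to pass from $\overline{\F}_p$- to $\F_p$-isomorphism classes by analysing twists. For $j\neq 0,1728$ a $j$-invariant has exactly two $\F_p$-forms, the quadratic twists, and the nontrivial twist negates the trace of Frobenius; since $s\neq 0$ exactly one of the two forms lies in $I(s)$, so each such $j$ contributes exactly one class and one already gets $N(s)=H(s^2-4p)$ away from the exceptional $j$-invariants. The curves with $j=0$ or $j=1728$ — which intervene only when $s^2-4p=-3t^2$ or $-4t^2$, i.e. in the situations governed by the sets $S_3,S_4$ — are the part I expect to be the main obstacle: the maximal order then carries extra units, so there are six (resp.\ four) $\F_p$-forms, realised as twists $E^{(u)}$ with Frobenius $u\pi$ for $u\in\mathcal{O}_K^{*}$, and one must check that the multiset of traces $\mathrm{Tr}(u\pi)$ meets the fixed value $s$ with exactly the multiplicity $h(\mathcal{O}_K)$ needed for the count to remain $H(s^2-4p)$; this, together with the surviving supersingular case $s=0$ (handled by the parallel Deuring count on the quaternionic side), is where the congruence hypotheses on $p$ are used.

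For the $n\times n$ refinement with $n$ odd, observe that $E(\F_p)[n]\cong(\Z/n\Z)^2$ is equivalent to $\pi$ acting as the identity on $E[n]$, i.e. to $\beta:=(\pi-1)/n$ lying in $\mathrm{End}(E)=\mathcal{O}$. Since $\pi-1$ satisfies $X^2-(s-2)X+(p-s+1)=0$, membership $\beta\in\mathcal{O}_K$ already forces $n\mid s-2$ and $n^2\mid p-s+1$, which together with $p\equiv 1\pmod n$ is exactly the stated condition $s\equiv p+1\pmod{n^2}$; when it fails, $N_{n\times n}(s)=0$. When it holds, $\Z[\beta]$ is an order of discriminant $(s^2-4p)/n^2$ and conductor $t/n$ with $\Z[\pi]\subseteq\Z[\beta]$, so the torsion condition becomes $\mathrm{cond}(\mathcal{O})\mid t/n$. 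Rerunning the counting argument of the first two steps over the orders $\mathcal{O}$ with $\Z[\pi]\subseteq\mathcal{O}$ and conductor dividing $t/n$ then gives $\sum_{f\mid(t/n)}h\!\left(\tfrac{s^2-4p}{(nf)^2}\right)=H\!\left(\tfrac{s^2-4p}{n^2}\right)$, as claimed. Oddness of $n$ keeps us clear of the $j=1728$ subtlety, and the $j=0$ case is dealt with as before.
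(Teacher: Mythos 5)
This statement is not proved in the paper at all: it is imported verbatim from Schoof (Thms.\ 4.6 and 4.9 of \cite{schoof}), so there is no internal argument to compare against. Your outline is essentially the standard Deuring--Waterhouse--Schoof proof, and in the ordinary case ($p\nmid s$) it is sound: $\Z[\pi]\subseteq\mathrm{End}(E)\subseteq\mathcal{O}_K$, each intermediate order contributes $h(\mathcal{O})$ classes, and the twisting analysis (including the deferred check at $j=0,1728$, where the six resp.\ four twists have pairwise distinct traces so each trace value is hit exactly $h(\mathcal{O}_K)=1$ times) does close up. Your reduction of the $n\times n$ statement to $(\pi-1)/n\in\mathrm{End}(E)$, the computation that $\Z[(\pi-1)/n]$ has discriminant $(s^2-4p)/n^2$, and the equivalence of the integrality conditions with $p\equiv 1\pmod n$, $s\equiv p+1\pmod{n^2}$ are all correct. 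Two small misstatements: no congruence hypotheses on $p$ enter the first assertion, and the oddness of $n$ is used mainly to guarantee $n\mid t$ (so that $\Z[(\pi-1)/n]$ really has conductor $t/n$ and the sum runs over $f\mid t/n$) and to avoid $2$-adic complications with fundamental discriminants, not to sidestep $j=1728$.

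The one genuine gap is the supersingular case $s=0$, which the statement includes and which the paper actually uses (in Section 3.2 it substitutes $G_k(0,p)N(0)$ for $(-p)^{k/2-1}H(-4p)$). Your step two would fail there if applied as written: for a supersingular curve over $\F_p$ the quadratic twist also has trace $0$, so \emph{both} $\F_p$-forms of each $j$-invariant lie in $I(0)$, and the count instead rests on Deuring's theorem relating the number of supersingular $j$-invariants in $\F_p$ to $h(-p)$ and $h(-4p)$ (equivalently, on the Waterhouse-style analysis of $\mathrm{End}_{\F_p}(E)$ as an order in $\Q(\sqrt{-p})$ containing $\Z[\sqrt{-p}]$), after which one checks $N(0)=h(-4p)$ for $p\equiv 1\pmod 4$ and $N(0)=h(-4p)+h(-p)$ for $p\equiv 3\pmod 4$, i.e.\ $H(-4p)$ in both cases. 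The quaternionic mass formula you invoke counts supersingular classes over $\overline{\F}_p$ and does not by itself give this twist-refined $\F_p$-count, so this case needs its own argument rather than a one-clause deferral. Note also that Schoof's own route is the direct $\F_p$-classification via kernel ideals (a torsor under $\mathrm{Cl}(\mathcal{O})$ for each order, per fixed Frobenius $\pi$), which avoids the pass through $\overline{\F}_p$-classes and the twist bookkeeping altogether.
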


 If $E$ is an elliptic curve such that $|E(\F_p)|=p+1-s$ then $\Z/n\Z \subset E(\F_p)[n] \iff n \mid \#E(\F_p) \iff  s \equiv p+1 \pmod{n}$. It follows from this  that $N_n(s)=N(s)$ if $s \equiv p+1 \pmod{n}$ and $N_n(s)=0$ otherwise.

We may apply Theorem \ref{schoof1} to replace $H(s^2-4p)$ by $N(s)$ for each $s$ in (\ref{eq1}).  However, since $s$ is not necessarily congruent to $ p+ 1 \pmod{\ell^2}$, we cannot simply replace $H\left(\frac{s^2-4p}{\ell^2}\right)$ by $N_{\ell \times \ell}(s)$ in (\ref{eq1}). Instead, we can use the following lemma when $p\equiv 1\pmod{\ell}$.

\begin{lem}\label{div} Assume that $p\equiv 1 \pmod{\ell}$. Then $\ell^2|s^2-4p \iff \ell^2|p+1-s$ or $\ell^2|p+1+s$.\end{lem}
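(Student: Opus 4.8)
The plan is to reduce the divisibility condition on $s^2-4p$ to one on the product $(p+1-s)(p+1+s)$, and then to argue that $\ell$ cannot divide both of these factors. First I would record the elementary identity
\[
s^2-4p=(p-1)^2-(p+1-s)(p+1+s),
\]
which follows from $(p+1)^2-4p=(p-1)^2$ together with $(p+1-s)(p+1+s)=(p+1)^2-s^2$. Since $p\equiv 1\pmod\ell$ we have $\ell\mid p-1$, hence $\ell^2\mid (p-1)^2$, and the identity then gives
\[
\ell^2\mid s^2-4p\iff \ell^2\mid (p+1-s)(p+1+s).
\]
This already settles the ``$\Leftarrow$'' direction of the lemma, since each of $\ell^2\mid p+1-s$ and $\ell^2\mid p+1+s$ forces $\ell^2\mid(p+1-s)(p+1+s)$.

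For the remaining direction I would observe that $\ell$ cannot divide both $p+1-s$ and $p+1+s$: their sum is $2(p+1)$, and since $p\equiv 1\pmod\ell$ we have $2(p+1)\equiv 4\pmod\ell$, which is nonzero because $\ell$ is an odd prime. So if $\ell$ divided both factors it would divide $4$, a contradiction. Consequently, writing $v_\ell$ for the $\ell$-adic valuation, we have $\min\bigl(v_\ell(p+1-s),\,v_\ell(p+1+s)\bigr)=0$, while $v_\ell(p+1-s)+v_\ell(p+1+s)=v_\ell\bigl((p+1-s)(p+1+s)\bigr)\geq 2$ under the hypothesis that $\ell^2\mid (p+1-s)(p+1+s)$. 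Hence one of the two valuations is at least $2$; that is, $\ell^2\mid p+1-s$ or $\ell^2\mid p+1+s$, which is the conclusion.

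I do not anticipate any genuine obstacle here. The one point requiring (mild) care is the use of $\ell$ being odd: this is exactly what guarantees that $p+1\not\equiv 0\pmod\ell$ and hence that the two factors $p+1\mp s$ are not simultaneously divisible by $\ell$. It is also where the hypothesis $p\equiv 1\pmod\ell$ (as opposed to $p\equiv -1\pmod\ell$) is essential, so this is the step I would state most carefully.
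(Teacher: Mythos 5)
Your proof is correct and follows essentially the same route as the paper: use $p\equiv 1\pmod{\ell}$ to reduce $\ell^2\mid s^2-4p$ to $\ell^2\mid(p+1-s)(p+1+s)$, then rule out $\ell$ dividing both factors (you contradict $\ell\mid 4$ via the sum $2(p+1)$, while the paper derives $\ell\mid s$ and contradicts $\ell\nmid 4p$; both hinge on $\ell$ being odd). No gaps worth noting.
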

\begin{proof}
We see that $\ell|p-1 \iff \ell^2|(p-1)^2 \iff p^2-2p+1\equiv 0 \pmod{\ell^2}$. Adding $4p$ to both sides then gives
$$\ell | p-1 \iff (p+1)^2 \equiv 4p \pmod{\ell^2}.$$
Assuming first  that $\ell^2 |s^2-4p$, this implies that  $(p+1)^2\equiv s^2 \pmod{\ell^2}
\implies  (p+1-s)(p+1+s) \equiv 0 \pmod{\ell^2}$. There are now three possibilities. If $\ell^2|p+1-s$ or $\ell^2|p+1+s$ then we are done. Otherwise, it must be that $\ell|p+1-s$ and $\ell|p+1+s$. Then, since we assume throughout that $\ell\neq 2$, this implies that $s\equiv 0 \pmod{ \ell}$. This is a contradiction, since then $0\equiv s^2 \equiv 4p \pmod{ \ell^2}$ and we assumed that $\ell \neq p$.

Conversely, if $\ell^2|p+1-s$ or $\ell^2|p+1+s$, then $(p+1)^2\equiv s^2 \pmod{\ell^2} \implies 4p \equiv s^2 \pmod{\ell^2}$.
\end{proof}

This lemma shows that if $\ell^2$ divides $s^2-4p$ (or equivalently, $\ell | t$) and $p\equiv 1\pmod{\ell}$, then either $s$ or $-s$ satisfies the hypotheses of Theorem \ref{schoof1}.  Therefore, either $H\left(\frac{s^2-4p}{\ell^2}\right)=N_{\ell \times \ell}(s)$ or $H\left(\frac{s^2-4p}{\ell^2}\right)=N_{\ell \times \ell}(-s)$. Since  $s$ and $-s$ cannot both be congruent $ p+1 \pmod{\ell^2}$, it  follows that $H\left(\frac{s^2-4p}{\ell^2}\right)=N_{\ell \times \ell}(s)+N_{\ell \times \ell}(-s)$ and so summing over all $s$ in the range $0<|s|<2\sqrt{p}$ gives

$$\sum_{0<s<2\sqrt{p}, \ell | t}G_k(s,p)H((s^2-4p)/\ell^2)=\sum_{0<|s|<2\sqrt{p}}G_k(s,p)N_{\ell \times \ell}(s).$$

Similarly, if $p\not\equiv 1\pmod{\ell}$ but $\leg{p}{\ell}=-1$, then $\ell \nmid t$ for any $t$ satisfying $s^2-4p=t^2D$. The second sum in (\ref{eq1} ) is empty  and also $N_{\ell\times \ell}(s)=0$ for all $s$ and so we may replace $H\left(\frac{s^2-4p}{\ell^2}\right)$ by $N_{\ell\times\ell}(s)$ in this sum without affecting the value. This shows that when $p\equiv 1\pmod{\ell}$ or $\leg{p}{\ell}=-1$ the trace formula can be written as

\begin{align}\label{eqn2}
\tr_k(\Gamma_0(\ell), p)=&-\sum_{0<|s|<2\sqrt{p}}G_k(s,p)\left(\frac{1}{2}\left(1+\leg{s^2-4p}{\ell}\right)N(s)+\ell N_{\ell\times \ell}(s)\right)\nonumber \\
&-K(p,\ell)+\ep_4(p,\ell)+\ep_3(p,\ell)+\delta(k)(p+1).
\end{align}


\section{Level 3}\label{sec:level3}
We  are now in a position to prove Theorem \ref{tracefor3},  a trace formula for $\ell=3$ and arbitrary prime $p \neq 3$.

\subsection{The case where $p\equiv 1\pmod{3}$}
We first prove the theorem in the case where $p\equiv 1\pmod{3}$. We begin by considering the main term in  (\ref{eqn2}). This term is
$$\sum_{0<|s|<2\sqrt{p}}G_k(s,p)\left(\frac{1}{2}\left(1+\leg{s^2-4p}{3}\right)N(s)+3 N_{3\times 3}(s)\right).$$

For each congruence class of $s \pmod{3}$, consider the term $\frac{1}{2}\left(1+\leg{s^2-4p}{3}\right)N(s)$. When $s \equiv 0 \pmod{3}$, we have $\leg{s^2-4p}{3}=-1$, so $\frac{1}{2}\left(1+\leg{s^2-4p}{3}\right)N(s)=0$, and also $N_3(s)=0$. When $s\equiv 1,2 \pmod{3}$, $\left(1+\leg{s^2-4p}{3}\right)=1$, and the terms in the sum corresponding to $s$ and  $-s$ are $\frac{1}{2}N(s)+\frac{1}{2}N(-s)=N(s)$. Since exactly one of $s,-s$ will be congruent to $ p+1 \pmod{3}$, exactly one of $N_3(s)$ and $N_3(-s)$ will be nonzero and equal to $N(s)$. We may therefore write
$$\frac{1}{2}N(s)+\frac{1}{2}N(-s)=N(s)=N_3(s)+N_3(-s).$$
 The main term is then
$$\sum_{0<|s|<2\sqrt{p}}G_k(s,p)(N_3(s)+3N_{3\times 3}(s)).$$

We next determine the values of $K(p,3), \epsilon_4(p,3)$ and $\epsilon_3(p,3)$. It is clear from the definition of $K(p,3)$ and the fact that $\leg{-p}{3}=\leg{-1}{3}=-1$ that
$$K(p,3)=2.$$
Now, if $p\equiv 1 \pmod{4}$, then $p=a^2+b^2\equiv 1 \pmod{3}$, and 3 must divide exactly one of $a$ or $b$. By our previous convention we assume $3|b$. This gives
\begin{equation}
\epsilon_4(p,3)=\left\{
    \begin{array}{ll}
      2G_k(2a,p) & \hbox{if $p\equiv 1 \pmod{4}$,} \\
      0 & \hbox{if $p\equiv 3 \pmod{4}$.}
    \end{array}
  \right.
\end{equation}
Again, writing $p=\frac{c^2+3d^2}{4}$, a congruence argument shows that $3|d$, and
\begin{equation}
\epsilon_3(p,3)=\frac{2}{3}\left(G_k(c,p)+G_k\left(\frac{c+3d}{2},p\right)+G_k\left(\frac{c-3d}{2},p\right)\right)+2G_k(c,p)
\end{equation}
and the trace formula becomes
\begin{eqnarray*}
\tr_k(\Gamma_0(3),p)&=&-\sum_{0<|s|<2\sqrt{p}}G_k(s,p)(N_3(s)+3N_{3\times 3}(s))-2+2G_k(c,p)+\epsilon_4(p,3)\\
&&+\frac{2}{3}\left(G_k(c,p)+G_k\left(\frac{c+3d}{2},p\right)+G_k\left(\frac{c-3d}{2},p\right)\right)+\delta(k)(p+1).
\end{eqnarray*}

The problem then reduces to parameterizing elliptic curves with 3-torsion and counting isomorphism classes. By changing coordinates so $(0,0)$ is a point of order 3,  any nonsingular elliptic curve $E$ with 3-torsion can be written in the form
\begin{equation}
E: y^2+a_1xy+a_3y=x^3
\end{equation}
with $a_3\neq  0$ (see, for example, Chapter 4 Section 2 in \cite{hus}). The $j$-invariant of such a curve is
\begin{equation}\label{j}
j(E)=\frac{a_1^3(a_1^3-24a_3)^3}{a_3^3(a_1^3-27a_3)}
\end{equation}
 and its discriminant is
\begin{equation}
\Delta(E)=a_3^3(a_1^3-27a_3).
\end{equation}
 By considering the division polynomial $\Psi_3$, it was shown (\cite{miret}, Cor. 5.2)  that when $p\equiv 1\pmod{3}$, $E$ has $E(\F_p)[3]\cong \Z/3\Z \times \Z/3\Z$ if and only if $\Delta(E)$ is a cube in $\F_p$, or equivalently if $a_1^3-27a_3$ is a cube in $\F_p$. We next show how to write any  elliptic curve with $j\neq 0$ in terms of one parameter.

Assume that $j(E)\neq 0$,  then (\ref{j}) implies that $a_1 \neq 0$.  Setting $u=\frac{a_3}{a_1^2}$ and making the change of variables $y \to u^3y$, $x\to u^2x$, gives the isomorphic curve
\begin{equation}
E_t: y^2+txy+t^2y=x^3, \ t=\frac{a_1^3}{a_3}.
\end{equation}
This curve has $j$-invariant $j(E_t)=\frac{t(t-24)^3}{t-27}$ and discriminant $\Delta_t:=\Delta(E_t)=t^6(t^3-27t^2)$. This provides a way of parameterizing all elliptic curves $E$ with $j(E)\neq 0$ and nontrivial 3-torsion.

If $j(E)=0$, then from (\ref{j}) we know that $a_1=0$ or $a_1^3=24a_3$. If in addition $E(\F_p)[3]\cong \Z/3\Z \times \Z/3\Z$ then Lemma 5.6 in \cite{schoof} tells us that there is only one such isomorphism class over $\F_p$. In particular, $E_{24}$ is an elliptic curve over $\F_p$ with $j(E_{24})=0$ and $\Delta_{24}=24^6\cdot 24^2\cdot (-3)=-24^6\cdot 2^6 \cdot 3^3$, a cube. This shows that any such $E$ will be isomorphic to $E_{24}$. In particular, the curve given by $y^2+y=x^3$ is isomorphic to $E_{24}$. Setting $u=24^{-1}a_1$,  and mapping $y\to u^3y $, $x\to u^2x$ shows that when $a_1^3=24a_3$ $E \cong E_{24}$. The curves with $j(E)=0$ and $E(\F_p)[3]\cong \Z/3\Z$  must have $a_1=0$ and are not of the form $E_t$ for any $t$.

Recall that $\mathcal{I}_p$ is the set of isomorphism classes of curves over $\F_p$. Define the following sets
 \begin{align*}
L(s)&:=\{t \in \F_p: \Delta_t\neq 0, |E_t|=p+1-s\}\\
I(s)&:=\{\mathcal{C} \in \mathcal{I}_p: \forall E \in \mathcal{C}, |E|=p+1-s\}\\
I_3(s)&:=\{[E] \in I(s): \Z/3\Z \subset E(\F_p)[3]\}\\
J_3(s)&:=\{[E] \in I(s): E(\F_p)[3]\cong \Z/3\Z, j(E)\neq 0, 1728\}\\
J_{3\times 3}(s)&:=\{[E] \in I(s): E(\F_p)[3]\cong \Z/3\Z \times \Z/3\Z, j(E)\neq 0, 1728\}\\
J^0_3(s)&:=\{[E] \in I(s): E(\F_p)[3]\cong \Z/3\Z, j(E)= 0\}\\
J^0_{3\times 3}(s)&:=\{[E] \in I(s): E(\F_p)[3]\cong \Z/3\Z \times \Z/3\Z, j(E)= 0\}\\
J^{1728}_3(s)&:=\{[E] \in I(s): E(\F_p)[3]\cong \Z/3\Z, j(E)= 1728\}\\
J^{1728}_{3\times 3}(s)&:=\{[E] \in I(s): E(\F_p)[3]\cong \Z/3\Z \times \Z/3\Z, j(E)= 1728\}.\\
\end{align*}
Then $I_3(s)=J_3(s)\cup J_{3\times 3}(s)\cup J_{3}^0(s) \cup J_{3\times 3}^0(s) \cup J_{3}^{1728}(s) \cup J_{3 \times 3}^{1728}(s)$ and by construction this is a union of disjoint sets. Note next that $1728=12^3$ is a cube and that this implies that a curve $E$ with $j$-invariant 1728 has a discriminant that is a cube and therefore has $E(\F_p)[3]\cong \Z/3\Z \times \Z/3\Z$. This shows that $J_{3}^{1728}=\emptyset$.

The goal now is to express the value $|L(s)|$ in terms of the sets above. This is accomplished with the following proposition.
\begin{prop} For every $s$, $L(s)$ satisfies the relationship $$|L(s)|=|J_3(s)|+4|J_{3\times 3}(s)|+|J_{3\times 3}^0(s)|+2|J_{3 \times 3}^{1728}(s)|.$$
\end{prop}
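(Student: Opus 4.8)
The plan is to read a parameter $t\in L(s)$ as a way of specifying an isomorphism class of the \emph{pointed} curve $(E_t,(0,0))$ — a curve together with a distinguished point of order $3$ — and then to count, isomorphism class by isomorphism class, how many values of $t$ produce it. The starting point is the normal form already used above: for any elliptic curve $E/\F_p$ and any rational point $P$ of order $3$ one can write $E$ as $y^2+a_1xy+a_3y=x^3$ with $P=(0,0)$ and $a_3\neq 0$, and a short computation shows that the only coordinate changes preserving this shape while fixing $(0,0)$ are $(x,y)\mapsto(u^2x,u^3y)$, $u\in\F_p^*$, which act by $(a_1,a_3)\mapsto(a_1/u,a_3/u^3)$. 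Hence $t(E,P):=a_1^3/a_3$ is a well-defined invariant of the pointed curve; moreover $\Delta_t\neq 0\iff t\neq 0,27$, and whenever $a_1\neq 0$ one recovers $(E,P)\cong(E_{t(E,P)},(0,0))$ by taking $u=a_1/t(E,P)$. Thus $t\mapsto(E_t,(0,0))$ is an injection of $\{t:\Delta_t\neq0\}$ into isomorphism classes of pointed curves, whose image consists exactly of those $(E,P)$ whose normal form has $a_1\neq0$. Since $(0,0)$ always has order $3$ on $E_t$, every class hit lies in some $I_3(s)$, so
$$|L(s)|=\sum_{[E]\in I_3(s)}w([E]),\qquad w([E]):=\#\{t:\Delta_t\neq0,\ E_t\cong E\}.$$

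Next I would convert $w([E])$ into an orbit count. Unwinding the automorphism computation shows that in the normal form of $(E,P)$ one has $a_1=0$ precisely when $P$ is fixed by a nontrivial automorphism of $E$: if $a_1=0$ the map $(x,y)\mapsto(\zeta_3 x,y)$ fixes $(0,0)$ (here $\zeta_3\in\F_p$ since $p\equiv1\pmod 3$), while if $a_1\neq0$ only the identity fixes $(0,0)$. Combined with the injection, $w([E])$ equals the number of $\mathrm{Aut}(E)$-orbits on the set $X_E$ of order-$3$ points of $E(\F_p)$ having trivial stabilizer in $\mathrm{Aut}(E)$. It then remains to evaluate this in the six cases of the disjoint decomposition $I_3(s)=J_3\cup J_{3\times3}\cup J_3^0\cup J_{3\times3}^0\cup J_3^{1728}\cup J_{3\times3}^{1728}$. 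If $j(E)\neq0,1728$ then $a_1\neq0$ automatically, $\mathrm{Aut}(E)=\{\pm1\}$, and $X_E$ is all order-$3$ points; pairing $P$ with $-P$ gives $w=1$ for $J_3(s)$ (two points) and $w=4$ for $J_{3\times3}(s)$ (eight points). If $j(E)=0$ then $|\mathrm{Aut}(E)|=6$ (as $p\equiv1\pmod6$): in the cyclic case both order-$3$ points are fixed by the $\mu_3\subset\mathrm{Aut}(E)$, so $X_E=\emptyset$ and $w=0$, explaining the absence of $J_3^0(s)$ from the formula; in the full case $E\cong E_{24}$, and one checks that $(0,0)$ has trivial stabilizer while the remaining two order-$3$ points form a single orbit of stabilizer order $3$, so $X_E$ is one $6$-element orbit and $w=1$, giving $J_{3\times3}^0(s)$. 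Finally, if $j(E)=1728$ then $\Delta(E)=(c_4/12)^3$ is a cube, so by Miret's criterion any $j=1728$ curve with a rational $3$-torsion point already has full $3$-torsion; hence $J_3^{1728}(s)=\emptyset$, while for $[E]\in J_{3\times3}^{1728}(s)$ the group $E(\F_p)$ is non-cyclic, which excludes supersingular reduction and forces $p\equiv1\pmod4$, so $\mathrm{Aut}(E)=\mu_4$; since $\iota^2=[-1]$ acts by $-1$ on $E[3]$, no nontrivial element of $\mu_4$ fixes a nonzero $3$-torsion point, the action on the eight such points is free, and $w=2$, giving $J_{3\times3}^{1728}(s)$. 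Summing the contributions over the decomposition yields the stated identity.

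The bulk of the work — and the main obstacle — is the $j=0$ case: one has to pin down $\mathrm{Aut}(E_{24})$ and its action on $E_{24}[3]$ precisely enough to see that of the eight points of order $3$ only the six in the orbit of $(0,0)$ arise from a parameter $t$, that these six form a single orbit, and that the two exceptional $3$-torsion points are exactly the ones responsible for the $a_1=0$ phenomenon and so are not of the form $E_t$; keeping track of $\mathrm{Aut}(E)$ for $j=1728$ (order $4$ precisely when $J_{3\times3}^{1728}(s)\neq\emptyset$) is a smaller companion subtlety, and the $j\neq0,1728$ cases are then routine $\pm1$-orbit counts.
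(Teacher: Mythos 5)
Your proposal is correct, and it takes a genuinely different route from the paper. The paper proves the fiber count for the map $t\mapsto [E_t]$ by brute force: for each class with $j$-invariant $j_0$ it factors the quartic $f(t)=t(t-24)^3-j_0(t-27)$ explicitly over $\bar{\F}_p$ in terms of a cube root $w$ of $t_0^3-27t_0^2$, observes that the extra roots lie in $\F_p$ exactly when $\Delta$ is a cube (full $3$-torsion, via Miret), and rules out the possibility that some of the four resulting curves are quadratic twists of $E$ by noting their traces of Frobenius are all $\equiv 1\pmod 3$; the $j=0$ and $j=1728$ classes are handled by solving $j(E_t)=0$ or $1728$ directly and invoking Schoof's uniqueness lemma. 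You instead read $t=a_1^3/a_3$ as the invariant of the pointed curve $(E,P)$ in Tate normal form, identify the fibers of $t\mapsto[E_t]$ with $\mathrm{Aut}(E)$-orbits of rational order-$3$ points having trivial stabilizer, and recover the coefficients $1,4,1,2$ (and $0$ for $J_3^0$) as orbit counts for $\mathrm{Aut}(E)=\{\pm1\},\mu_6,\mu_4$. This is more structural: it explains \emph{why} $J_3^0(s)$ is absent and why $E_{24}$ and the $j=1728$ class get weights $1$ and $2$, it avoids both the quartic factorization and the twist-exclusion argument, and it adapts readily to other torsion levels or to $\F_q$; the paper's argument, by contrast, is completely explicit and needs nothing about automorphism groups. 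The two checks you flag as the remaining work are genuine but short: for the cyclic $j=0$ case, note that a rational order-$3$ point with trivial stabilizer would produce a parameter $t$ with $j(E_t)=0$, forcing $t=24$, whose discriminant is a cube so $E_{24}$ has full $3$-torsion, a contradiction (alternatively, compute directly with $y^2=x^3+b$, where the rational order-$3$ points in the cyclic case are exactly the $x=0$ points fixed by $\zeta_3$); and for $j=1728$ the hypothesis $p\equiv1\pmod 3$ forces $3\nmid p+1$, so a curve in $J^{1728}_{3\times3}(s)$ is ordinary with CM by $\Z[i]$, giving $p\equiv1\pmod4$ and $\mathrm{Aut}(E)=\mu_4$ as you claim. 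With those observations inserted, your argument closes completely and yields the stated identity.
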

To see this result, define the map
 \begin{equation}
\phi_s:L(s) \to I_3(s) \textrm{ by } t \mapsto [E_t].
\end{equation}
  By the previous discussion, $\phi_s(t) \not\in J_{3}^0(s)$ for any $t$, and so $\phi_s$ maps $L(s)$ onto $J_3(s)\cup J_{3\times 3}(s) \cup J_{3\times 3}^0(s) \cup J_{3 \times 3}^{1728}(s)$, and the following lemma describes the structure of this map.

\begin{lem}  Let $[E]\in I_3(s)$. Then $[E]$ has exactly 1 preimage under  $\phi_s$ when $[E] \in  J_3(s)\cup J_{3\times 3}^0(s)$, exactly 2 preimages when $[E] \in J_{3 \times 3}^{1728}(s)$ and exactly 4 preimages when $[E] \in J_{3\times 3}(s)$.
\end{lem}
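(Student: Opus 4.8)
The plan is to reinterpret ``number of preimages of $[E]$ under $\phi_s$'' as ``number of $\operatorname{Aut}_{\F_p}(E)$-orbits of suitable points of order $3$'' and then run a short four-case count. First I would record the normal-form package behind the definition of $\phi_s$: if $E/\F_p$ carries a rational point $P$ of exact order $3$, then $(E,P)$ can be put in the shape $(E_{a_1,a_3},(0,0))$ with $a_3\neq0$, uniquely up to the substitutions $(x,y)\mapsto(u^2x,u^3y)$, which act by $(a_1,a_3)\mapsto(u^{-1}a_1,u^{-3}a_3)$ --- these are exactly the reductions carried out in the paragraphs preceding the lemma. Two consequences I would extract: $a_1(P)=0$ iff $j(E)=0$ (the shape $y^2+a_3y=x^3$ being precisely the $j=0$ case), and when $a_1(P)\neq0$ the quantity $t(P):=a_1(P)^3/a_3(P)$ is a complete isomorphism invariant of the \emph{pair} $(E,P)$, with $(E,P)\cong(E_{t(P)},(0,0))$.

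Next I would reduce the lemma to an orbit count. Fix a representative $E$ of $[E]\in I_3(s)$ and let $T\subseteq E(\F_p)$ be the set of points of exact order $3$ with $a_1(P)\neq0$. The map $P\mapsto t(P)$ sends $T$ onto $\phi_s^{-1}([E])$ --- each $t(P)$ lies in $L(s)$ because $E_{t(P)}\cong E$ (so it has $p+1-s$ points) and $\Delta_{t(P)}\neq0$, while conversely any $t_0\in\phi_s^{-1}([E])$ equals $t(P)$ with $P$ the image of $(0,0)$ under an isomorphism $E_{t_0}\to E$ --- and by the previous paragraph $t(P)=t(P')$ precisely when $P'=\alpha(P)$ for some $\alpha\in\operatorname{Aut}_{\F_p}(E)$. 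Hence $\lvert\phi_s^{-1}([E])\rvert=\lvert T/\operatorname{Aut}_{\F_p}(E)\rvert$, and the needed inputs are standard: $E(\F_p)$ contains $2$ or $8$ points of exact order $3$ according as $E(\F_p)[3]\cong\Z/3$ or $\cong\Z/3\times\Z/3$; for $p>3$ such a point is never $2$-torsion, hence never fixed by $-1$; and for $p\equiv1\pmod{3}$ one has $\operatorname{Aut}_{\F_p}(E)=\{\pm1\}$ when $j\neq0,1728$ and $\operatorname{Aut}_{\F_p}(E)\cong\mu_6$ when $j=0$.

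The three routine cases then follow. For $[E]\in J_3(s)$, $j\neq0$ gives $T=\{P,-P\}$ of size $2$ with $\operatorname{Aut}_{\F_p}(E)=\{\pm1\}$ acting as a single orbit: one preimage. For $[E]\in J_{3\times3}(s)$, the same group acts on the $8$ points of $T$ in orbits $\{P,-P\}$: four preimages. For $[E]\in J^0_{3\times3}(s)$ I would bypass automorphisms: $j(E_t)=t(t-24)^3/(t-27)$ vanishes only at $t=0$ and $t=24$, and $t=0\notin L(s)$, so $t=24$ is the only value with $j(E_t)=0$; thus every $j=0$ class has at most one preimage, and $[E_{24}]$ --- the unique such class in $I(s)$ --- has exactly one. (One could instead observe that exactly two of the eight order-$3$ points have $a_1=0$, the pair $\{(0,0),-(0,0)\}$ of a Deuring model $y^2+a_3y=x^3$, and that the remaining six form a single free $\mu_6$-orbit.)

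The delicate case is $[E]\in J^{1728}_{3\times3}(s)$, and here I expect the only real difficulty. Again $T$ consists of all $8$ order-$3$ points, so the claimed answer $2$ forces $\lvert\operatorname{Aut}_{\F_p}(E)\rvert=4$, and the point is to justify that. One must rule out $p\equiv3\pmod{4}$: over such a prime a curve with $j=1728$ is supersingular, hence has exactly $p+1$ points, and $p+1\equiv2\pmod{3}$ under the standing hypothesis $p\equiv1\pmod{3}$ --- so it cannot lie in $I_3(s)$. With $p\equiv1\pmod{4}$ one has $\operatorname{Aut}_{\F_p}(E)=\langle\zeta\rangle\cong\mu_4$ with $\zeta^2=-1$, and since $\zeta-1$ and $\zeta+1$ act invertibly on $E[3]\cong(\Z/3)^2$, $\zeta$ acts freely on the eight points, giving $8/4=2$ orbits. (A more computational route: verify that $t(t-24)^3=1728(t-27)$ has four simple roots whenever $J^{1728}_{3\times3}(s)\neq\emptyset$ and that they fall into two $\F_p$-isomorphism classes via an explicit quartic twist.) Collecting the four counts $1,4,1,2$ yields the lemma, and summing over $I_3(s)$ gives the displayed proposition.
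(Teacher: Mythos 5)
Your argument is correct in substance, but it takes a genuinely different route from the paper. The paper works entirely on the parameter side: for $j_0\neq 0,1728$ it factors the quartic $f(t)=t(t-24)^3-j_0(t-27)$ explicitly over $\bar{\F}_p$ in terms of a cube root $w$ of $t_0^3-27t_0^2$, so that the number of rational roots is $4$ or $1$ according as $\Delta_{t_0}$ is a cube or not, and then excludes quadratic twists among the four resulting curves by comparing traces of Frobenius modulo $3$; for $j=1728$ it solves $j(E_t)=1728$ directly, getting $t=18\pm6\sqrt{3}$, and appeals to Schoof's uniqueness lemma (which also supplies the emptiness for $p\equiv 3\pmod 4$). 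You instead identify $\phi_s^{-1}([E])$ with the set of $\mathrm{Aut}_{\F_p}(E)$-orbits of rational points of exact order $3$ having $a_1(P)\neq 0$, i.e.\ you use that $t\in L(s)$ is a moduli parameter for pairs $(E,P)$; the counts then follow from $|T|\in\{2,8\}$, $|\mathrm{Aut}_{\F_p}(E)|\in\{2,4\}$, and freeness of the $\mu_4$-action on the order-$3$ points (since $\zeta\pm 1$ have degree $2$, prime to $3$). Your handling of $j=1728$ is arguably cleaner than the paper's (no explicit roots, and the vacuity for $p\equiv 3\pmod 4$ drops out of supersingularity together with $p\equiv 1\pmod 3$), while the paper's explicit factorization is more self-contained and is reused almost verbatim in the $p\equiv 2\pmod 3$ section; your $j=0$ case is essentially the paper's.

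Two small repairs are needed. First, the parenthetical claim that $a_1(P)=0$ if and only if $j(E)=0$ is false in one direction: by the paper's formula (\ref{j}), $j=0$ also occurs when $a_1^3=24a_3$ (this is exactly $E_{24}$), and if the ``iff'' were true your $J_{3\times 3}^{0}(s)$ class would have no preimage at all, contradicting your own count. Only the implication $a_1(P)=0\Rightarrow j(E)=0$ holds, and fortunately that is the only direction your three orbit-counting cases use, while you treat $j=0$ by the direct root count. Second, the assertion that $t(P)=a_1(P)^3/a_3(P)$ is a \emph{complete} invariant of the pair $(E,P)$ (equivalently, that the only changes of variable preserving the shape $y^2+a_1xy+a_3y=x^3$ and the point $(0,0)$ are $(x,y)\mapsto(u^2x,u^3y)$) is not contained in the reductions preceding the lemma, which only give existence of the form; it does need a check: a general substitution fixing $(0,0)$ and the shape forces $r=t=0$ and then $s=0$ or $s=-a_1$, and the latter is excluded when $a_1\neq 0$ because it yields $a_4'=a_1a_3/u^4\neq 0$. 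With these two points fixed, your proof is complete and yields the same counts $1,1,2,4$ as the paper.
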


\begin{proof}
\case{1} Let $[E] \in  J_{3\times 3}^0(s)$. Then $[E] = [E_{24}]$. Since $0=j(E_t)=\frac{t(t-24)^3}{t-27}$ the only possible preimages of $[E]$ are $t=24,0$. But $\Delta_0 =0$, so $t$ cannot be zero and there is exactly one preimage.
\\
\\
\case{2} Assume now that $p\equiv 1 \pmod{4}$, because otherwise $J_{3 \times 3}^{1728}=\emptyset$, by \cite{schoof}, Lem. 5.6. Let $[E] \in J_{3 \times 3}^{1728}(s)$. Again, $[E] \cong [E_t]$ for some $t$, and $1728=\frac{t(t-24)^3}{t-27}$.  Solving for $t$, we find that the only possible solutions are $t_1=18 + 6\sqrt{3}$, $t_2=18-6\sqrt{3}$. Since $\sqrt{3} \in \F_p$ when $p\equiv 1 \pmod{4}$, both solutions are in $\F_p$. By \cite{schoof} Lem. 5.6, there is only one isomorphism class of curve with $j(E)=1728$ so $\phi_s(t_1)=\phi_s(t_2)=[E]$.
\\
\\
\case{3} We next consider the case where $[E]\in J_3(s) \cup J_{3 \times 3}(s)$, and $j(E)=j_0$. Define the polynomial
$$f(t)=t(t-24)^3-j_0(t-27).$$
This has roots at all $t$ such that $j(E_t)=j_0$. Since $E\cong E_{t_0}$ for some $t_0$, we  know  that there is at least one solution to $f(t)$ in $\F_p$. Recalling that  $\rho$ satisfies $\rho^2+\rho+1=0$ and defining $w$ so that $w^3=(t_0^3-27t_0^2)$, we may factor $f$ over $\bar{\F}_p[x]$ as

{\small
$$  f(t)=(t-t_0)\left(t-\frac{(w-t_0+36)(2w+t_0)}{3w}\right)\left(t-\frac{(\rho w-t_0+36)(2\rho w+t_0)}{3\rho w}\right)\left(t-\frac{(\rho^2 w-t_0+36)(2\rho^2 w+t_0)}{3\rho^2 w}\right).$$}

Since $w\in \F_p$ if and only of $\Delta$ is a cube in $\F_p$ or equivalently $E$ has full 3-torsion, we see that $[E]$ has exactly one preimage when $E(\F_p)[3]\cong \Z/3\Z$. If $w$ is a cube, then  there are four values of $t$ that map to curves isomorphic over $\bar{\F}_p$ to $E$.  These four curves are either isomorphic over $\F_p$ to $E$ or a quadratic twist of $E$. The second case cannot occur because by construction each of the four curves have nontrivial 3-torsion, and so all have their trace of Frobenius congruent to 1 modulo $3$ and a quadratic twist of $E$ would have trace of Frobenius congruent to 2 modulo 3.  Therefore, $[E]$ has four preimages only when $E(\F_p)[3]\cong \Z/3\Z\times \Z/3\Z$.
\end{proof}

The proposition now follows easily from the above lemma. Returning then to the main term of the trace formula, we may write
\begin{eqnarray*}
&&\sum_{0<|s|<2\sqrt{p}}G_k(s,p)(N_3(s)+3N_{3 \times 3}(s))\\
&&\ =\sum_{0<|s|<2\sqrt{p}}G_k(s,p)(\underbrace{|J_3(s)|+4|J_{3\times 3}(s)|+|J_{3\times 3}^0(s)|+2|J_{3\times 3}^{1728}(s)|}_{|L(s)|}+3|J_{3\times 3}^0(s)|+2|J_{3\times 3}^{1728}(s)|+|J_3^0(s)|)\\
&&\ =\sum_{0<|s|<2\sqrt{p}}G_k(s,p)|L(s)|+\sum_{0<|s|<2\sqrt{p}}G_k(s,p)(3|J_{3\times 3}^0(s)|+2|J_{3\times 3}^{1728}(s)|+|J_3^0(s)|)\\
&&\ =\sum_{t\in \F_p, \Delta_t \neq 0} G_k(a(E_t),p)+\sum_{0<|s|<2\sqrt{p}}G_k(s,p)(3|J_{3\times 3}^0(s)|+2|J_{3\times 3}^{1728}(s)|+|J_3^0(s)|).
\end{eqnarray*}
It remains to identify for which $s$ are $J_{3\times 3}^0(s),J_{3\times 3}^{1728}(s),J_3^0(s)$ nonempty. From Schoof \cite{schoof1}, \cite{schoof}, we know that when $p\equiv 1 \pmod{3}$, there are six curves $E$ with $j(E)=0$ and each has $\textrm{End}(E)\cong\Z\left[\frac{1+\sqrt{-3}}{2}\right]$. For each such curve $E$, its trace of Frobenius $s$ therefore satisfies $s^2-4p=-3t^2$ for some $t$. As discussed previously, the six traces $s$ satisfying this equation are $s=\pm c, \pm \frac{c+3d}{2}, \pm \frac{c-3d}{2}$ and for each such $s$, exactly one of $s$ or $-s$ will be congruent to $ p+1 \pmod{3}$, the proper congruence in order to have nontrivial 3-torsion. Of these three, a congruence argument shows that exactly one will further satisfy $s\equiv p+1 \pmod{9}$, and by construction $|s|=c$. Similarly,  If $p\equiv 1\pmod{4}$, the  $E$ such that $j(E)=1728$ have $\textrm{End}(E)\cong \Z[i]$ and as before the trace of Frobenius of such an $E$ will satisfy $s^2-4p=-4t^2$. We use the following lemma from \cite{schoof}
\begin{lem}[\cite{schoof}, Lem. 5.6]\label{numb} Let $\F_p$ be a finite field,
\begin{enumerate}
\item There is at most one elliptic curve $E$ with $j=0$ and $\#E(\F_p)[3]=9$. There is exactly one if and only if $p\equiv 1\pmod{3}$ and this curve has the trace of its Frobenius endomorphism equal to $c$ as above.
\item There is at most one  elliptic curve $E$ with $j=1728$ and $\#E(\F_p)[3]=9$. There is exactly one if and only if $p\equiv 1 \pmod{12}$ and this curve has the trace of its Frobenius endomorphism equal to $2a$.
\end{enumerate}
\end{lem}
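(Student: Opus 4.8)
\emph{Proof proposal.} My plan is to reduce both parts to a single observation. If $E/\F_p$ is ordinary with $\mathrm{End}_{\F_p}(E)$ equal to the maximal order $\mathcal{O}_K$ of an imaginary quadratic field $K$, and $\pi\in\mathcal{O}_K$ denotes its Frobenius, then $E[3]$ is free of rank one over $\mathcal{O}_K/3\mathcal{O}_K$ and $\pi$ acts on it by multiplication; hence $E(\F_p)[3]\cong\Z/3\Z\times\Z/3\Z$ if and only if $\pi\equiv1\pmod{3\mathcal{O}_K}$. For $j=0$ (resp.\ $j=1728$) the endomorphism ring in the ordinary case must be the maximal order of $\Q(\sqrt{-3})$ (resp.\ $\Q(\sqrt{-1})$), since these are the $j$-invariants of $\Z[\zeta_3]$ and $\Z[i]$. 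Before the main count I would dispose of the ``only if'' directions: by the Weil pairing, $E(\F_p)[3]\cong(\Z/3\Z)^2$ for any $E/\F_p$ forces $\mu_3\subset\F_p$, i.e.\ $p\equiv1\pmod3$; and a curve with $j=1728$ is supersingular when $p\equiv3\pmod4$, in which case $|E(\F_p)|=p+1$, so full $3$-torsion would require both $3\mid p-1$ and $9\mid p+1$, which is impossible --- thus in part~(2) one also needs $p\equiv1\pmod4$.

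For part~(1), assuming $p\equiv1\pmod3$: the $j=0$ curves over $\F_p$ are the sextic twists $y^2=x^3+d$, which split into the $\gcd(6,p-1)=6$ classes indexed by $\F_p^*/(\F_p^*)^6$, and their Frobenii, up to complex conjugation, run through the six elements $u\pi_0$ with $u\in\Z[\zeta_3]^*$, where $\pi_0$ is a fixed generator of a prime of $\Z[\zeta_3]$ above $p$ (this identification of the twist parameter with the unit $u$ is the classical evaluation of $|E_d(\F_p)|$ via the sextic residue character / Jacobi sums). Since $p\not\equiv0\pmod3$, $\pi_0$ is a unit mod $3$, and the reduction map $\Z[\zeta_3]^*\to(\Z[\zeta_3]/3\Z[\zeta_3])^*$ between two cyclic groups of order $6$ is an isomorphism (the six roots of unity stay distinct modulo $3$); so exactly one $u$ satisfies $u\pi_0\equiv1\pmod{3\Z[\zeta_3]}$, giving exactly one of the six classes with full $3$-torsion. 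To get the trace, write that Frobenius as $\pi=1+3\beta$, $\beta\in\Z[\zeta_3]$; then $\pi\bar\pi=p$ gives $\mathrm{tr}(\pi)=\pi+\bar\pi=p+1-9\,N(\beta)\equiv p+1\pmod9$, and comparing with the list $\pm c,\pm\frac{c+3d}{2},\pm\frac{c-3d}{2}$ of traces of $j=0$ curves (where $3\mid d$) isolates $c$ under the normalization of Section~\ref{sec:trace}.

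Part~(2) runs identically with $\Z[i]$ in place of $\Z[\zeta_3]$: assuming $p\equiv1\pmod{12}$, the $j=1728$ curves are the quartic twists $y^2=x^3+ax$ in the $\gcd(4,p-1)=4$ classes of $\F_p^*/(\F_p^*)^4$, with Frobenii (up to conjugation) $u\pi_1$, $u\in\Z[i]^*=\{\pm1,\pm i\}$ and $N(\pi_1)=p$. The one new feature is that $3$ is inert in $\Z[i]$, so $\Z[i]/3\Z[i]\cong\F_9$ and the image of $\Z[i]^*$ in the cyclic group $\F_9^*$ of order $8$ is its unique subgroup of order $4$, the squares; hence $u\pi_1\equiv1\pmod3$ is solvable exactly when $\pi_1$ is a square in $\F_9^*$, i.e.\ $\pi_1^4=1$. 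Since conjugation on $\Z[i]$ reduces to the Frobenius $x\mapsto x^3$ of $\F_9/\F_3$, one has $\pi_1^4\equiv\pi_1\bar\pi_1=p\pmod3$, which equals $1$ because $p\equiv1\pmod3$; so there is exactly one admissible class, and a mod-$9$ comparison as above identifies its trace with $2a$ (among the candidates $\pm2a,\pm2b$, $3\mid b$).

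\emph{Main obstacle.} The conceptual part is short; I expect the only real friction to be the last line of each case --- pinning down the \emph{exact} value of the distinguished trace, $c$ in~(1) and $2a$ in~(2) (including its sign), from the congruence $s\equiv p+1\pmod9$ together with the quadratic-form parametrizations $4p=c^2+3d^2$ and $p=a^2+b^2$ of Section~\ref{sec:trace}. That is a residue computation modulo $9$ to be carried out with the sign conventions fixed once and for all, and it is essentially the only place bookkeeping enters. The one external input I would not reprove is the precise dictionary between the twist parameter and the unit $u$ --- the classical Jacobi-sum description of how $|E_d(\F_p)|$ varies over a twist family; alternatively one could avoid the twist bookkeeping by invoking Schoof's formula $N_{3\times3}(s)=H((s^2-4p)/9)$ together with $h(\Z[\zeta_3])=h(\Z[i])=1$, but the sign determination for the trace is unavoidable in any approach.
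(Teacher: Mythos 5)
The paper offers no proof of this lemma at all: it is imported verbatim from Schoof \cite{schoof}, so there is no internal argument to compare against, and what you have written is in effect a reconstruction of the standard CM-theoretic proof. Your outline is essentially correct: the reduction to ``$E(\F_p)[3]\cong(\Z/3\Z)^2$ iff $\pi\equiv 1\pmod{3\mathcal{O}_K}$'' for an ordinary curve with $\mathrm{End}(E)=\mathcal{O}_K$ maximal (free rank-one action on $E[3]$), the injectivity of $\Z[\zeta_3]^*\to(\Z[\zeta_3]/3)^*$ and $\Z[i]^*\to\F_9^*$ by the norm bound, the computation $\pi_1^4\equiv\pi_1\bar\pi_1=p\equiv1\pmod 3$, and the Weil-pairing/supersingular exclusions of the other congruence classes of $p$ are all sound, and the mod-$9$ comparison does isolate $|s|=c$ (resp.\ $|s|=2a$) among the six (resp.\ four) candidate traces. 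Two caveats. First, the sign issue you flag is genuine and not mere bookkeeping: with the positivity convention of Section \ref{sec:trace}, the congruence $s\equiv p+1\pmod 9$ determines only $|s|$; e.g.\ for $p=7$ one has $4p=1^2+3\cdot3^2$, so $c=1$, while the unique $j=0$ curve with full rational $3$-torsion, $y^2+y=x^3$, has trace $-1$. So ``trace $=c$'' literally requires Schoof's normalization of the sign of $c$; the present paper is insulated from this because it only uses the lemma through quantities that are even in $s$ (for even $k$, $G_k(-s,p)=G_k(s,p)$ by (\ref{g})), so only $|s|$ matters in its applications. Second, your parenthetical alternative is not quite right as stated: $N_{3\times3}(s)=H\left(\frac{s^2-4p}{9}\right)$ from Theorem \ref{schoof1} counts isomorphism classes of \emph{all} $j$-invariants with full $3$-torsion and trace $s$, so invoking it together with $h(\Z[\zeta_3])=h(\Z[i])=1$ does not by itself isolate the $j=0$ or $j=1728$ curve; what is needed there is the Deuring correspondence (ordinary $j=0$ curves are exactly those with $\mathrm{End}$ the maximal order $\Z[\zeta_3]$, of which there are $h(-3)=1$ per Frobenius conjugacy class), which is essentially what your main twist/unit argument proves anyway. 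Finally, the twist--Frobenius dictionary you defer to can be had without Jacobi sums: the sextic (resp.\ quartic) twists are classified by characters of $\mathrm{Gal}(\bar{\F}_p/\F_p)$ with values in $\mu_6\subset\mathcal{O}_K^*$ (resp.\ $\mu_4$), and the Frobenius of the twist is $u\pi$ with $u$ the value of the twisting character at Frobenius.
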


Then if $p\equiv 3 \pmod{4}$, $J_{3\times 3}^{1728}(s)=\emptyset$ for all $s$, and if $p\equiv 1\pmod{4}$, $|J_{3\times 3}^{1728}(2a)|=1$ and $J_{3 \times 3}^{1728}(s)=\emptyset$ for all other $s$. Recalling that $\epsilon_4(p,3)=2G_k(2a, p)$, this gives:

\begin{eqnarray*}
\sum_{0<|s|<2\sqrt{p}}G_k(s,p)(N_3(s)+3N_{3 \times 3}(s))&=&\sum_{t\in \F_p, \Delta_t \neq 0} G_k(a(E_t),p) +3G_k(c,p)\\
&&+G_k\left(\frac{c+3d}{2},p\right)+G_k\left(\frac{c-3d}{2}\right)+\epsilon_4(p,3).
\end{eqnarray*}

Finally, we relate this back to the trace.
{\small
\begin{eqnarray*}
\tr_k(\Gamma_0(3),p)&=&-\sum_{0<|s|<2\sqrt{p}}G_k(s,p)(N_3(s)+3N_{3 \times 3}(s))-2+\epsilon_4(p,3)\\
&&+\frac{2}{3}\left(G_k(c,p)+G_k\left(\frac{c+3d}{2},p\right)+G_k\left(\frac{c-3d}{2},p\right)\right)+2G_k(c,p)+\delta(k)(p+1)\\
&=&-\sum_{t\in \F_p, \Delta_t \neq 0} G_k(a_p(E_t),p)-3G_k(c,p)-G_k\left(\frac{c+3d}{2},p\right)-G_k\left(\frac{c-3d}{2}\right)-\epsilon_4(p,3)\\
&&-2+\epsilon_4(p,3)+\frac{2}{3}\left(G_k(c,p)+G_k\left(\frac{c+3d}{2},p\right)+G_k\left(\frac{c-3d}{2},p\right)\right)+2G_k(c,p)+\delta(k)(p+1)\\
&=&-2-\sum_{t\in \F_p, \Delta_t \neq 0} G_k(a_p(E_t),p)-\frac{1}{3}\left(G_k(c,p)+G_k\left(\frac{c+3d}{2},p\right)+G_k\left(\frac{c-3d}{2},p\right)\right)+\delta(k)(p+1).
\end{eqnarray*}
}

This can be simplified with the following lemma.
\begin{lem}\label{ga} Let $E$ be an elliptic curve over $\Q$ and $p$ a prime for which $E$ has good reduction. Recall that
$$L(E,s)=\sum_{n}a_n(E) n^{-s}$$
 is the Hasse-Weil $L$-function of $E$. Then the $p$ power coefficients of $L(E,s)$ can be written explicitly as a function of $a_p(E)$ by
$$a_{p^{k-2}}(E)=G_k(a_p(E),p) \textrm{ when $k\geq 2$.}$$
\end{lem}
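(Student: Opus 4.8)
The plan is to compare the recursion satisfied by the $p$-power coefficients $a_{p^n}(E)$ of the Hasse--Weil $L$-function with the recursion satisfied by $G_k(a_p(E),p)$ as a function of $k$, and check that the initial data agree. Recall that for a prime $p$ of good reduction the local $L$-factor is $(1-a_p(E)\,p^{-s}+p^{1-2s})^{-1}$, which means that the generating function $\sum_{n\ge 0} a_{p^n}(E)\,T^n$ equals $(1-a_p(E)\,T+pT^2)^{-1}$. Writing $a_p(E)=x+y$ and $p=xy$, where $x,y$ are the roots of $\Phi(T)=T^2-a_p(E)T+p$ (the same $x,y$ appearing in the definition of $G_k$), partial fractions gives the closed form $a_{p^n}(E)=\frac{x^{n+1}-y^{n+1}}{x-y}$ for all $n\ge 0$. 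On the other hand, by the very definition $G_k(a_p(E),p)=\frac{x^{k-1}-y^{k-1}}{x-y}$. Setting $n=k-2$ matches the two expressions immediately: $a_{p^{k-2}}(E)=\frac{x^{k-1}-y^{k-1}}{x-y}=G_k(a_p(E),p)$.

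First I would record the partial-fraction identity $(1-a_pT+pT^2)^{-1}=\frac{1}{x-y}\left(\frac{x}{1-xT}-\frac{y}{1-yT}\right)$, valid since $x\ne y$ generically (and by continuity/polynomial identity in the boundary case $x=y$, where both sides are interpreted as limits), and expand each geometric series to read off the coefficient of $T^n$. Then I would match this against the definition of $G_k$. To make the identification fully explicit at the level of the polynomial formula (\ref{g}) rather than through the roots $x,y$, one can alternatively verify that both sides satisfy the same linear recurrence in the index: $a_{p^{n}}(E)=a_p(E)\,a_{p^{n-1}}(E)-p\,a_{p^{n-2}}(E)$ comes from multiplying the generating-function identity by $(1-a_pT+pT^2)$, and the corresponding three-term recurrence for $G_k(s,p)$ in $k$ follows either from the root formula or directly from (\ref{g}) by a short manipulation of binomial coefficients. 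Checking the base cases $a_{p^0}(E)=1=G_2(s,p)$ (empty sum, value $1$) and $a_p(E)=G_3(s,p)$ then closes the induction.

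There is essentially no obstacle here: the lemma is a formal consequence of the definition of $G_k$ via the roots of $\Phi$ together with the standard shape of the Euler factor of an elliptic curve at a prime of good reduction. The only point requiring a word of care is the degenerate case $x=y$ (i.e. $a_p(E)^2=4p$, which does not occur for $p>3$ by the Hasse bound but is harmless anyway): there one passes to the limit, or simply uses the recurrence argument, which is insensitive to whether the roots coincide. I would present the root-based computation as the main line and mention the recurrence as the way to see it purely in terms of formula (\ref{g}).
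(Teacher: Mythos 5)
Your proposal is correct. Your main line differs slightly in mechanics from the paper's argument: you derive the closed form $a_{p^{n}}(E)=\frac{x^{n+1}-y^{n+1}}{x-y}$ directly from the Euler factor $(1-a_p(E)T+pT^2)^{-1}$ by partial fractions and then match it against the definition $G_k(s,p)=\frac{x^{k-1}-y^{k-1}}{x-y}$, whereas the paper takes the three-term recurrence $a_{p^e}(E)=a_p(E)a_{p^{e-1}}(E)-p\,a_{p^{e-2}}(E)$ (with $a_{p^0}(E)=1$) as the starting point and proves the identity by induction on $k$, checking $G_2(a_p(E),p)=1$ and using $a_p(E)=x+y$, $p=xy$ to collapse $a_p(E)G_{k-1}-pG_{k-2}$ into $G_k$. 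These are two phrasings of the same underlying fact; your generating-function route gives the closed form in one step and makes the identification transparent without induction, while the paper's induction avoids invoking the explicit shape of the local $L$-factor and works purely from the standard recurrence. Your secondary route (verifying that both sides satisfy the same recurrence with the same initial data) is in fact precisely the paper's proof. Your remark about the degenerate case $x=y$ is a reasonable extra precaution, though it never occurs here: $a_p(E)^2=4p$ is impossible for $a_p(E)\in\Z$ and $p$ prime, so the root formula is unambiguous in all cases relevant to the lemma.
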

\begin{proof}
Recall that we can define $G_k(s,p)$  by
$$G_k(s,p):=\frac{x^{k-1}-y^{k-1}}{x-y}$$
where $x+y=s$ and $xy=p$.

We will show that the function $G_k(a_p(E),p)$ satisfies the same recurrence as the $p$ power coefficients of $L(E,s)$.
 This recurrence for the coefficients $a_{p^e}(f)$ of the $L$-function where $E$ has good reduction at $p$, is
$$a_{p^e}(E)=a_p(E)a_{p^{e-1}}(E)-p\cdot a_{p^{e-2}}(E)$$
and $a_{p^0}(E):=1$.

Explicitly evaluating $G_2(a_p(E),p)$ shows
$$G_2(a_p(E),p)=1=a_{p^0}(E).$$

Now assume that the relation holds for all weights less than $k$. Then in particular
 \begin{eqnarray}
a_{p^{k-3}}(E)&=&G_{k-1}(a_p(E),p) \label{i1}\\
a_{p^{k-4}}(E)&=&G_{k-2}(a_p(E),p). \label{i2}
\end{eqnarray}
 Computing $a_{p^{k-2}}(E)$ using the known recurrence relation and equations (\ref{i1}) and (\ref{i2}) we have
 \begin{eqnarray*}
a_{p^{k-2}}(E)&=&a_p(E)a_{p^{k-3}}(E)-p\cdot a_{p^{k-4}}(E)\\
&=&a_p(E)G_{k-1}(a_p(E),p)-pG_{k-2}(a_p(E),p)\\
&=&a_p(E)\left(\frac{x^{k-2}-y^{k-2}}{x-y}\right)-p\left(\frac{x^{k-3}-y^{k-3}}{x-y}\right).
\end{eqnarray*}
 Since $a_p(E)=x+y$ and $xy=p$, we may replace these in the equation above
\begin{eqnarray*}
 &=&(x+y)\left(\frac{x^{k-2}-y^{k-2}}{x-y}\right)-(xy)\left(\frac{x^{k-3}-y^{k-3}}{x-y}\right)\\
 &=&\frac{x^{k-1}-y^{k-1}}{x-y}\\
 &=&G_k(a_p(E),p)
\end{eqnarray*}
which proves the lemma.

\end{proof}

Let $\alpha \in \F_p^*$ be a noncube. Then one can check that
$$\{|a_p(E_{0,\alpha})|,| a_p(E_{0,\alpha^2})|, |a_p(E_{0, \alpha^3})|\}=\left\{c,  \frac{c+3d}{2}, \left|\frac{c-3d}{2}\right|\right\}$$
so that the final form of the trace formula is
\begin{equation}
\tr_k(\Gamma_0(3),p)=-2-\sum_{t\in \F_p, \Delta_t \neq 0} a_{p^{k-2}}(E_t)-\frac{1}{3}\left(a_{p^{k-2}}(E_{0,\alpha})+a_{p^{k-2}}(E_{0,\alpha^2}) +a_{p^{k-2}}(E_{0,\alpha^3})\right)+\delta(k)(p+1).
\end{equation}


\subsection{The case where  $p\equiv 2\pmod{3}$}\label{sec:level32}
Next, we prove the version of the trace formula for $p\equiv 2\pmod{3}$. The argument follows similarly to the case where $p\equiv 1\pmod{3}$. Also, we assume that $p>3$ since the $p=2$ case is straightforward.  We begin by noting that $\leg{p}{3}=-1$, so that the trace formula in this case is given by (\ref{eqn2}). Also, $\epsilon_3(p,3)=0$ and $\epsilon_4(p,3)=0$ so that the trace formula can be written as

$$\tr_k(\Gamma_0(3),p)=-\frac{1}{2}\cdot \sum_{0<|s|<2\sqrt{p}}G_k(s,p)\left(1+\leg{D}{3}\right)N(s)-K(p,3)+\delta(k)(1+p).$$

Recall that $N_3(s)$ is the number  of isomorphism classes of elliptic curves with trace of Frobenius $s$ and a point of order 3. Since
$$| E| = p +1-s$$
we see that in our case, $N_3(s)=N(s)$ when $s\equiv 0 \pmod{3}$ and $N_3(s)=0$ otherwise.
Still using the relation $s^2-4p=t^2D$, we also find that
\begin{align*}
s\equiv 0 \pmod{3} &\iff D\equiv 1\pmod{3} \iff \left(1+\leg{D}{3}\right)=2\\
s\equiv 1,2 \pmod{3} &\iff D\equiv 2 \pmod{3} \iff \left(1+\leg{D}{3}\right)=0
\end{align*}
so we can write the trace formula as
\begin{eqnarray}
\tr_k(\Gamma_0(3),p)&=&-\sum_{0<|s|<2\sqrt{p}}G_k(s,p)N_3(s)-2-(-p)^{k/2-1}H(-4p)+\delta(k)(1+p)\nonumber \\
&=&-\sum_{0<|s|<2\sqrt{p}}G_k(s,p)N_3(s)-2-G_k(0,p)N(0)+\delta(k)(1+p)\nonumber \\
&=&-\sum_{0\leq |s|<2\sqrt{p}}G_k(s,p)N_3(s)-2+\delta(k)(1+p).
\end{eqnarray}

Again define $E_t : y^2+txy+t^2y=x^3$, which has $(0,0)$ as a point of order 3 and the sets
\begin{align*}
L(s)&:=\{t\in \F_p: \Delta_t \neq 0, |E_t|=p+1-s\}\\
I(s)&:=\{\mathcal{C} \in \mathcal{I}_p : \forall E \in C, |E|=p+1-s\}\\
I_3(s)&:=\{[E]\in I(s): \Z/3\Z \subset E(\F_p)[3]\}
\end{align*}
and consider the map
$$\phi_s:L(s)\to I_3(s) \textrm{ to } t\mapsto [E_t].$$
We will prove the following lemma.
\begin{lem}
Assuming that $s\equiv 0 \pmod{3}$, the map $\phi_s:L(s)\to I_s(s)$ is injective, and when $s\neq 0$ it is a bijection.
\end{lem}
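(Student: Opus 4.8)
The plan is to establish injectivity for every $s$ and then surjectivity under the extra hypothesis $s\neq 0$, in both cases by keeping track of the distinguished order‑$3$ point $(0,0)$ on the curves $E_t$.

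\textit{Injectivity.} For every $t$ the point $(0,0)$ lies on $E_t$, and the tangent line there is $y=0$, which meets $E_t$ only at $(0,0)$ with multiplicity three; hence $[3](0,0)=O$, and whenever $\Delta_t\neq 0$ the point $(0,0)$ has order exactly $3$. In particular $3\mid|E_t(\F_p)|=p+1-s'$ for the trace $s'$ of $E_t$, and since $p\equiv 2\pmod 3$ this gives $3\mid s'$; thus $L(s)=\emptyset$ unless $s\equiv 0\pmod 3$, and there is nothing to prove otherwise. Next I would use that, because $3\nmid p-1$, the Weil pairing prevents $E_t(\F_p)[3]$ from containing $\Z/3\Z\times\Z/3\Z$, so $E_t(\F_p)[3]=\langle(0,0)\rangle$ is cyclic of order $3$. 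Given $t_1,t_2\in L(s)$ and an $\F_p$‑isomorphism $\psi\colon E_{t_1}\to E_{t_2}$, $\psi$ carries $E_{t_1}(\F_p)[3]$ isomorphically onto $E_{t_2}(\F_p)[3]$, so $\psi(0,0)\in\{(0,0),-(0,0)\}$; replacing $\psi$ by $[-1]\circ\psi$ if necessary, we may assume $\psi(0,0)=(0,0)$. An isomorphism of Weierstrass models fixing $O$ and $(0,0)$ has the form $(x,y)\mapsto(u^2x,\,u^3y+u^2vx)$; substituting into $y^2+t_1xy+t_1^2y=x^3$ and demanding that the resulting equation again have the shape $y^2+t_2xy+t_2^2y=x^3$ forces (using $t_1\neq 0$) first $v=0$, and then $u=1$ and $t_1=t_2$. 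Hence $\phi_s$ is injective.

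\textit{Surjectivity for $s\neq 0$.} Let $[E]\in I_3(s)$, so $E/\F_p$ has trace $s$ and carries a rational point of order $3$. I would first check $j(E)\neq 0$: since $p\equiv 2\pmod 3$ the cubing map is a bijection of $\F_p$, so every curve $y^2=x^3+b$ has exactly $p$ affine points and hence trace divisible by $p$; therefore a curve over $\F_p$ with $j=0$ is supersingular, and as $|s|<2\sqrt p<p$ it would have trace $0$, contradicting $s\neq 0$. Now by \cite[Ch.~4 \S2]{hus} the curve $E$, having a rational $3$‑torsion point, admits a model $E\cong E_{a_1,a_3}$ with $a_3\neq 0$; since $j(E)\neq 0$, equation (\ref{j}) forces $a_1\neq 0$. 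Exactly as in the change of variables used earlier (scaling by $u=a_3/a_1^2$), one gets $E_{a_1,a_3}\cong E_t$ with $t=a_1^3/a_3\in\F_p^*$. Since $E_t\cong E$ is nonsingular, $\Delta_t\neq 0$ and $E_t$ has trace $s$, so $t\in L(s)$ and $\phi_s(t)=[E_t]=[E]$. Thus $\phi_s$ is onto, and with injectivity it is a bijection.

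\textit{Main obstacle.} The only subtle step is injectivity, and the essential point is that $p\equiv 2\pmod 3$ makes the $\F_p$‑rational $3$‑torsion of $E_t$ cyclic, so that any $\F_p$‑isomorphism between two of these curves is determined up to the sign automorphism by its effect on the marked point $(0,0)$. This is precisely the feature that breaks down when $p\equiv 1\pmod 3$: there the four rational subgroups of order $3$ allow up to four preimages, in agreement with the earlier case analysis. A secondary point to watch is ruling out $j(E)=0$ in the surjectivity argument; this rests on the supersingularity of $j=0$ curves when $p\equiv 2\pmod 3$, and is exactly why $\phi_0$ fails to be surjective (the $j=0$ curves with rational $3$‑torsion have trace $0$ but are not of the form $E_t$).
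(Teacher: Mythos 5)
Your proposal is correct, and while your surjectivity argument coincides with the paper's (curves with $j=0$ are supersingular when $p\equiv 2\pmod 3$, hence have trace $0$, so every class in $I_3(s)$ with $s\neq 0$ has $j\neq 0$ and can be put in the form $E_t$), your injectivity argument takes a genuinely different route. The paper fixes a class with $j$-invariant $j_0$, studies the polynomial $f(t)=t(t-24)^3-j_0(t-27)$, shows it has exactly two roots in $\F_p$ (the residual quadratic has discriminant $-3$ times a square, and $-3$ is a nonsquare), and then identifies the second root with the quadratic twist of the first curve: the twist has trace $-s\equiv 0\pmod 3$, hence also has rational $3$-torsion and is of the form $E_{t_1}$, so only one root gives a curve $\F_p$-isomorphic to $E_{t_0}$; the $j=0$ possibility ($t=24$) is then treated separately. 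You instead exploit the marked point: since $3\nmid p-1$, the Weil pairing forces $E_t(\F_p)[3]=\langle(0,0)\rangle$ to be cyclic, so any $\F_p$-isomorphism $E_{t_1}\to E_{t_2}$ sends $(0,0)$ to $\pm(0,0)$, and after composing with $[-1]$ a direct Weierstrass change-of-variables computation (translation part zero, then $v=0$, $u=1$) forces $t_1=t_2$. Your argument is more rigid and uniform — it needs no case split for $j=0$, avoids the explicit factorization and the twist identification, and makes transparent exactly why injectivity breaks when $p\equiv 1\pmod 3$ (the marked $3$-torsion point can move among four rational subgroups, matching the four preimages found earlier) — whereas the paper's argument reuses the same factorization machinery as its $p\equiv 1\pmod 3$ analysis and explicitly exhibits the second root as the twist. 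One cosmetic remark: your aside that $L(s)=\emptyset$ unless $s\equiv 0\pmod 3$ is not needed, since the lemma already assumes $s\equiv 0\pmod 3$, and the paper's proof additionally records (beyond the statement) that for $s=0$ exactly one class, that of $y^2+y=x^3$, is missed by $\phi_0$; this extra fact is used later for $|L(0)|=N(0)-1$, so if your proof is to replace the paper's it should be kept alongside.
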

\begin{proof}

When $s\neq 0$, surjectivity is clear, since any elliptic curve with 3-torsion and nonzero $j$-invariant can be written in the form given above, and curves with $j$ invariant equal to 0 will be supersingular. When $s=0$, the isomorphism classes of  curves $E_t^0$  with $j(E_t^0)=0$ given by $E_t^0: y^2+t y=x^3$ are not in the image  of $\phi_0$. Any two curves $E_{t_0}^0$ and $E_{t_1}^0$ in this form will have the Weierstrass forms $E_{t_0}^0: y^2=x^3-(108t_0)^2$ and $E_{t_1}^0: y^2=x^3-(108t_1)^2$. These curves isomorphic over $\F_p$, since all elements  of $\F_p$ are cubes. This shows that when $s=0$, there is exactly one isomorphism class over $\F_p$ that is not in the image of $\phi_0$.

For injectivity, consider first the case where the $j$-invariant is nonzero. Let $[E_{t_0}]\in I_3(s)$ be an  isomorphism class of curve over $\F_p$ with $j$-invariant  $j_0$, and consider its preimage in $L(s)$. As in the case for $p\equiv 1\pmod{3}$, define the polynomial
$$f(t)=t(t-24)^3-j_0(t-27).$$
Then any $t\in L(s)$ mapping to $[E_{t_0}]$ will be a root of $f(t)$. Now, define $w \in \F_p$ to be the unique element of $\F_p$ satisfying
$$w^3=(t_0^3-27t_0^2).$$
Then one can verify that another  root of $f(t)$ is
$$t_1=\frac{(w-t_0+36)(2w+t_0)}{3w}.$$
Therefore, $f(t)$ factors into  two linear terms $(t-t_0)(t-t_1)$ and a quadratic term with discriminant
$$-\frac{3 ((t-36) w+2 (t-27) t)^2}{4 (t-27)^{\frac{2}{3}}t^{\frac{2}{3}}}.$$

Since $p\equiv 2\pmod{3}$, it follows that $-3$ is not a square  in $\F_p$, so the quadratic term does not further simplify over $\F_p$. We have now shown that there are exactly two roots of $f(t)$ in $\F_p$, and so the corresponding curves $E_{t_0}$ and $E_{t_1}$ are isomorphic over $\bar{\F}_p$. We can easily see that they are not isomorphic over $\F_p$ since $E_{t_0}$ will have a quadratic twist defined over $\F_p$ with the same $j$-invariant. Since the trace of Frobenius of this twist is $-s\equiv 0 \pmod{3}$, it will also have 3-torsion, and so can be written as $E_t$ for some $t$. Such a curve will be isomorphic to $E_{t_0}$ over $\bar{\F}_p$ but not $\F_p$, so it must be isomorphic to $E_{t_1}$ over $\F_p$. This shows that $[E_{t_0}]$ has exactly one preimage in $L(s)$.

Now, if $j(E_t)=0$, and  $[E_t]$ is in the  image of $\phi_0$, by a previous discussion in fact $t=24$ and so the map is also injective.

\end{proof}
This lemma shows that $|L(s)|=|I_3(s)|=N_3(s)$ when $s\neq 0$ and $|L(0)|=|I_3(0)|-1 =N(0)-1$. Using this in the trace formula gives
\begin{eqnarray}
\tr_k(\Gamma_0(3),p)&=&-\sum_{0\leq |s|<2\sqrt{p}}G_k(s,p)|L(s)|-G_k(0,p)-2+\delta(k)(1+p)\nonumber \\
&=&-\sum_{t\in \F_p, \Delta_t\neq 0} G_k(a_p(E_t),p)-(-p)^{k/2-1}-2 +\delta(k)(1+p) \nonumber\\
&=&-\sum_{t\in \F_p, \Delta_t\neq 0} a_{p^{k-2}}(E_t)-(-p)^{k/2-1}-2 +\delta(k)(1+p),
\end{eqnarray}
which proves Theorem \ref{tracefor3} in all cases.


\subsection{Proof of Corollary \ref{cor1}}
Corollary \ref{cor1} now follows quickly from Theorems \ref{hyper} and \ref{tracefor3}.
\begin{proof}[Proof of Corollary \ref{cor1}]
Begin with the formula
$$\tr_k(\Gamma_0(3),p)=-\sum_{t\in \F_p, \Delta(E_t) \neq 0} a_{p^{k-2}}(E_t)-\gamma_k(p)-2$$
and use the relation
\begin{equation}\label{ta}
a_{p^{k-2}}(E_t)=t_{p^{k-2}}(E_t)+p\cdot a_{p^{k-4}}(E_t)
\end{equation}
to replace each $a_{p^{k-2}}(E_t)$ to give
$$\tr_k(\Gamma_0(3),p)=-\sum_{t\in \F_p, \Delta(E_t) \neq 0} t_{p^{k-2}}(E_t)-p\cdot \sum_{t\in \F_p, \Delta(E_t) \neq 0}a_{p^{k-4}}(E_t)-\gamma_k(p)-2.$$
 One can see equation  (\ref{ta}) by recalling that
$$a_p(E)=t_p(E)=\alpha+\bar{\alpha}$$ where $\alpha \bar{\alpha}=p$, and that for each $k$, $t_{p^k}(E)=\alpha^k+\bar{\alpha}^k$. Then

$$(\alpha-\bar{\alpha})t_{p^k}(E)=(\alpha-\bar{\alpha})(\alpha^k+\bar{\alpha}^k)=\alpha^{k+1}-\bar{\alpha}^{k+1}-\alpha \bar{\alpha} (\alpha^{k-1}-\bar{\alpha}^{k-1})=\alpha^{k+1}-\bar{\alpha}^{k+1}-p (\alpha^{k-1}-\bar{\alpha}^{k-1})$$
and so

$$t_{p^k}(E)=\frac{\alpha^{k+1}-\bar{\alpha}^{k+1}}{\alpha-\bar{\alpha}}-p \frac{\alpha^{k-1}-\bar{\alpha}^{k-1}}{\alpha-\bar{\alpha}}
=G_{k+2}(a_p(E),p)-p\cdot G_k(a_p(E),p)=a_{p^k}(E)-p\cdot a_{p^{k-2}}(E)$$
where the final equality follows from Lemma \ref{ga}.

Apply again  (\ref{ta}) to each $a_{p^{k-4}}(E_t)$ and so on, until reaching $a_p^0(E_t)=1$.

The result is the following formula:
$$\tr_k(\Gamma_0(3),p)=-\sum_{i=0}^{k/2-2}p^i\sum_{t\in F_p, \Delta(E_t)\neq 0} t_{p^{k-2-2i}}(E_t)-p^{k/2-1}(p-2)-\gamma_k(p)-2.$$
Applying Theorem \ref{hyper} to each $t_{p^{k-2-2i}}(E_t)$ then yields the corollary.
\end{proof}

\subsection{Inductive Trace}\label{sec:inductive}
Now that we have proven Theorem \ref{tracefor3}, we can prove Theorem \ref{tracefor3inductivebetter}, a version of the trace formula which expresses $\tr_k(\Gamma_0(3),p)$ in terms of traces on spaces of smaller weight, as well as an additional inductive formula.

\begin{proof}[Proof of Theorem \ref{tracefor3inductivebetter}]
We show the theorem when $p\equiv 1 \pmod{3}$, but the $p\equiv 2 \pmod{3}$ case follows similarly. We use the relation \ref{ta}
 in order to phrase Theorem \ref{tracefor3} in terms of traces of Frobenius, and then Theorem \ref{hyper} to express this in terms of Gaussian hypergeometric functions.

Replacing each $a_{p^{k-2}}(E_t)$ by $t_{p^{k-2}}(E_t)+p\cdot a_{p^{k-4}}(E_t)$ in the sum then gives
{
\begin{eqnarray*}
\tr_k(\Gamma_0(3),p)&=&-\sum_{{t \in \F_p \atop \Delta(E_t)\neq 0}} t_{p^{k-2}}(E_t)-p\sum_{{t \in \F_p \atop \Delta(E_t)\neq 0}}a_{p^{k-4}}(E_t)-\frac{1}{3}(t_{p^{k-2}}(E_{0,\alpha})+t_{p^{k-2}}(E_{0,\alpha^2})+t_{p^{k-2}}(E_{0,\alpha^3})) \nonumber \\
&&-\frac{1}{3}(p\cdot a_{p^{k-4}}(E_{0,\alpha})+p\cdot a_{p^{k-4}}(E_{0,\alpha^2})+p\cdot a_{p^{k-4}}(E_{0,\alpha^3})) -2\\
&=&-\sum_{{t \in \F_p \atop \Delta(E_t)\neq 0}} t_{p^{k-2}}(E_t)-\frac{1}{3}(t_{p^{k-2}}(E_{0,\alpha})+t_{p^{k-2}}(E_{0,\alpha^2})+t_{p^{k-2}}(E_{0,\alpha^3}))\\
&&+p \cdot \tr_{k-2}(\Gamma_0(3),p)+2p-2\\
&=& p^{k-2}\sum_{t=2}^{p-1}\Fha{\rho}{\rho^2}{\epsilon}{t}_{p^{k-2}}+ p\cdot \tr_{k-2}(\Gamma_0(3),p)\\
&&-\frac{1}{3}(t_{p^{k-2}}(E_{0,\alpha})+t_{p^{k-2}}(E_{0,\alpha^2})+t_{p^{k-2}}(E_{0,\alpha^3}))+2p-2 \textrm{ by Theorem \ref{hyper}}
\end{eqnarray*}
}
Finally, we will show in Lemma \ref{beta} that
$$\frac{1}{3}(t_{p^{k-2}}(E_{0,\alpha})+t_{p^{k-2}}(E_{0,\alpha^2})+t_{p^{k-2}}(E_{0,\alpha^3}))=
\left\{
  \begin{array}{ll}
    0 & \hbox{if $k\equiv 0,1 \pmod{3}$} \\
    -p^{k-2}\cdot \Fha{\rho}{\rho^2}{\epsilon}{9\cdot 8^{-1}}_{p^{k-2}} & \hbox{if $k \equiv 2\ \  \pmod{3}$.}
  \end{array}
\right.
$$
which completes the proof when $p\equiv 1 \pmod{3}$.
\end{proof}


\section{Level 9}\label{sec:level9}
\subsection{Proof of Theorems \ref{tracefor9}, \ref{tracefor9inductive}}

Now we sketch a proof of Theorem \ref{tracefor9inductive}, a trace formula for Hecke operators on $S_k(\Gamma_0(9))$.  Notation and methods are similar to the level 3 case, so we only outline the important differences. Its not hard to see (from the definition of $c(s,f,9)$ given below) that when $p\equiv 2 \pmod{3}$, $\tr_k(\Gamma_0(9),p)=\tr_k(\Gamma_0(3),p)$.  Therefore the level 3 formulas hold in this case. Because  of this, we may assume  throughout that $p\equiv 1 \pmod{3}$.
Applying Hijikata's trace formula results in the following expression:
$$\tr_k(\Gamma_0(9),p)=-\frac{1}{2}\sum_{0<|s|<2\sqrt{p}}G_k(s,p)\sum_{f|t}\hs{\frac{s^2-4p}{f^2}}c(s,f,9)-2+\delta(k)(1+p).$$
The following lemma characterizes the function $c(s,f,9)$.

\begin{prop} Let $s^2-4p=t^2D$ where $D$ is a fundamental discriminant of an imaginary quadratic field and let $f|t$. Let
$$\tau:=\ord_3 t,$$
$$\rho:=\ord_3 f.$$
Then the value of $c(s,f,9)$ is given by:\\
If $\tau=\rho$:
$$c(s,f,9)=\left\{
             \begin{array}{ll}
               2, & \hbox{if $D\equiv 1\pmod{3}$;} \\
               0, & \hbox{if $D \equiv 2 \pmod{3}$;} \\
               0, & \hbox{if $D\equiv 0 \pmod{3}$.}
             \end{array}
           \right.
$$
If $\tau=\rho+1$:
$$c(s,f,9)=\left\{
  \begin{array}{ll}
    5, & \hbox{if $D\equiv 1 \pmod{3}$;} \\
    3, & \hbox{if $D\equiv 2 \pmod{3}$;} \\
    4, & \hbox{if $D\equiv 0 \pmod{3}$.}
  \end{array}
\right.$$
If $\tau> \rho +1$:
$$c(s,f,9)=4.$$
\end{prop}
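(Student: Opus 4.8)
The number $c(s,f,9)$ is the local factor at $3$ in Hijikata's trace formula \cite{hij1}: since $9=3^{2}$ is a prime power it is exactly Hijikata's single local term, the analogue of the quantity defined in \eqref{c} and Theorem \ref{hijtrace} but now for $\ord_{3}(N)=2$ in place of $\ord_{3}(N)=1$. The plan is therefore to recall the explicit prime-power local factor from \cite{hij1}, specialize it to the prime $3$ with exponent $2$, and translate Hijikata's conditions into the parameters $\tau$, $\rho$ and the residue of $D$ modulo $3$. It is worth recording first what the local factor measures: up to the normalization built into Hijikata's formula it counts the roots of $\Phi(X)=X^{2}-sX+p$ modulo a power of $3$ whose exponent is governed by $\ord_{3}(N)=2$ and by $\rho=\ord_{3}(f)$ — equivalently, the optimal embeddings, up to conjugacy over $\Z_{3}$, of the quadratic order attached to the divisor $f$ (the one of discriminant $(s^{2}-4p)/f^{2}$, whose $3$-adic conductor has valuation $\tau-\rho$) into an Eichler order of level $9$ in $M_{2}(\Z_{3})$. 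Note that $f\mid t$ forces $\rho\le\tau$, so the three regimes $\tau=\rho$, $\tau=\rho+1$, $\tau>\rho+1$ are exhaustive.

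The $3$-adic bookkeeping is then straightforward to set up. Since $3$ is odd, completing the square gives $\Phi(X)=(X-\tfrac{s}{2})^{2}-\tfrac14(s^{2}-4p)$, so roots of $\Phi$ modulo $3^{m}$ correspond to solutions $u=x-\tfrac{s}{2}$ of $u^{2}\equiv\tfrac14 t^{2}D\pmod{3^{m}}$. Here $\ord_{3}\bigl(\tfrac14 t^{2}D\bigr)=2\tau+\ord_{3}(D)$, and since $D$ is a fundamental discriminant $\ord_{3}(D)\in\{0,1\}$, with $\ord_{3}(D)=1$ exactly when $D\equiv0\pmod3$. This splits the analysis into the unramified cases $D\equiv1,2\pmod3$, where the target has even valuation $2\tau$ and possesses a square root (two of them, at the appropriate $3$-adic depth) precisely when its unit part is a square modulo $3$, i.e. when $D\equiv1\pmod3$; and the ramified case $D\equiv0\pmod3$, where the target has odd valuation $2\tau+1$, no square of exact valuation $2\tau+1$ exists, and only the ``absorbed'' solutions with $u$ divisible by a high power of $3$ can contribute, and only when the modulus exponent does not exceed $2\tau+1$.

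With this dictionary the statement becomes a finite Hensel-lifting computation in the three regimes, cross-classified by whether $D\equiv1,2,0\pmod3$. The one free constant — the normalization in Hijikata's definition — is pinned down by demanding agreement with the already known $\ord_{3}(N)=1$ values $c(s,f,3)=1+\leg{D}{3}$ (when $\ord_{3}f=\ord_{3}t$) and $c(s,f,3)=2$ (when $\ord_{3}f<\ord_{3}t$) from \eqref{c}; once it is fixed, the entries $2,0,0$ in the regime $\tau=\rho$, the entries $5,3,4$ in the regime $\tau=\rho+1$, and the uniform value $4$ for $\tau>\rho+1$ all drop out of the casework. A useful internal check is that one recovers the expected relation to the level-$3$ factor, e.g. $c(s,f,9)=3+c(s,3f,3)$ in the regime $\tau=\rho+1$, and $c(s,f,9)=c(s,f,3)$ precisely in the cases that make the $p\equiv2\pmod3$ reduction of Section \ref{sec:level9} work.

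The main obstacle is fixing Hijikata's normalization in the exponent-$2$ case precisely — the exact modulus $3^{m}$ as a function of $\rho$ and the exact power of $3$ by which the raw root count is divided — and then pushing through the two delicate regimes: the ramified case $D\equiv0\pmod3$, and the boundary regime $\tau=\rho+1$, where the valuation of $\tfrac14 t^{2}D$ is comparable to $m$ so that Hensel's lemma does not apply in clean form and one must enumerate by hand the solutions lying in the critical band of valuations. A secondary point, needed to justify the three-case shape of the answer, is to confirm that for exponent $2$ the value depends only on $\min(\tau-\rho,2)$ and on $D\bmod3$, and on no finer $3$-adic information — a reflection of the fact that an Eichler order of level $3^{2}$ only detects congruences to a bounded $3$-power.
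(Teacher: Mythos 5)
Your identification of $c(s,f,9)$ as Hijikata's local factor at $3$ with $\ord_3(N)=2$, and the reduction to counting roots of $\Phi(X)=X^2-sX+p$ modulo prescribed powers of $3$ (after completing the square, solutions of $u^2\equiv \tfrac14 t^2D$, organized by the valuation $2\tau+\ord_3 D$ and by $\leg{D}{3}$), is the right framework; the paper itself states the proposition as a direct specialization of the definition of $c(s,f,N)$ in \cite{hij1} and records no further argument. But as a proof your proposal has a genuine gap, and it sits exactly where you place your ``main obstacle''. Hijikata's definition contains no free constant to calibrate: for $\ell^{\nu}\| N$ and $\rho=\ord_\ell f$ it prescribes explicitly the moduli of the congruences imposed on the roots of $\Phi$, the auxiliary congruence involving $\ell^{\rho}$, and the rule for when a second count is added (the analogue of the dichotomy $\ord_\ell f=\ord_\ell t$ versus $\ord_\ell f<\ord_\ell t$ in \eqref{c}). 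Proposing to pin down ``the normalization'' by demanding agreement with the $\nu=1$ values $c(s,f,3)$ is unsound: the level-$3$ data do not determine the level-$9$ local counts, and your consistency checks are consequences of the finished table rather than constraints strong enough to produce it --- they cannot, for instance, certify the constant $4$ in the regime $\tau>\rho+1$, nor decide the entry for $D\equiv 0\pmod 3$ when $\tau=\rho+1$; the relation $c(s,f,9)=3+c(s,3f,3)$ in the middle regime is something you can only observe after the values are known.

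Beyond that, the actual content of the proposition is the six-way casework that you defer (``the values drop out of the casework'', ``one must enumerate by hand''), i.e.\ precisely the ramified case $D\equiv 0\pmod 3$ and the boundary regime $\tau=\rho+1$ where Hensel lifting is not automatic; none of it is carried out. The correct and short route is to quote the exponent-$2$ definition verbatim from \cite{hij1}, substitute $x=\tfrac{s}{2}+u$, and count $u$ against the prescribed $3$-power moduli using only the valuation of $\tfrac14 t^2D$ and the residue of $D$ modulo $3$; each of the cases is then a two-line valuation argument, and one also gets your secondary claim (dependence only on $\min(\tau-\rho,2)$ and $D\bmod 3$) for free. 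As written, however, your argument replaces the definition by a calibration heuristic and the casework by an assertion, so it does not establish the stated values $2,0,0$; $5,3,4$; $4$.
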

Because $p\equiv 2 \pmod{3} \implies \tau=\rho$, we have $c(s,f,9)=1+\leg{D}{3}$ when $p\equiv 2 \pmod{3}$. This agrees with the $\ell=3$ case, and the same calculations as in Section \ref{sec:level32} show that $\tr_k(\Gamma_0(3),p)=\tr_k(\Gamma_0(9),p)$. In fact, one can show, using the definition of $c(s,f,N)$ from \cite{hij1} that $c(s,f,3^m)=1+\leg{D}{3}$ for each $m$, and so all of these traces are equal.

Now, as in Lemma \ref{removec}, we remove the $c(s,f,9)$ term from the trace formula by applying Theorem \ref{coxlem}.

\begin{lem} Assume that $p\equiv 1 \pmod{3}$. We can write
$$\sum_{f|t} \hs{\frac{s^2-4p}{f^2}}c(s,f,9)=\left\{
                                               \begin{array}{ll}
                                                 12H^*\left(\frac{s^2-4p}{9}\right) & \hbox{if $3|t$;} \\
                                                 0 & \hbox{otherwise.}
                                               \end{array}
                                             \right.
$$
\end{lem}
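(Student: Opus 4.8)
The plan is to mimic the proof of Lemma \ref{removec}, using the explicit values of $c(s,f,9)$ given in the preceding proposition together with Theorem \ref{coxlem} (Cox, Cor.\ 7.28) to eliminate the $c$-term. First I would dispose of the case $3\nmid t$: here every $f\mid t$ has $\ord_3 f=\ord_3 t=0$, so we are always in the ``$\tau=\rho$'' branch, and since $p\equiv 1\pmod 3$ forces $s^2-4p=t^2D$ with $3\nmid t$ to have $D\equiv s^2-p\equiv s^2-1\pmod 3$; one checks $3\mid t \iff 3\mid s^2-4p \iff s\equiv \pm 1\cdot(\text{something})$, and more directly that $3\nmid t$ together with $p\equiv1\pmod3$ gives $D\equiv 2\pmod 3$ (since $3\nmid D$ and $s^2-4p\equiv s^2-1$, and if $3\mid s$ then $3\mid t$, contradiction, so $s^2\equiv1$, $D\equiv t^2D\equiv s^2-4p\equiv 0\pmod 3$ only if... ) — this little congruence bookkeeping shows $c(s,f,9)=1+\leg{D}{3}=0$ in the $3\nmid t$ case, so the whole sum vanishes, matching the ``otherwise'' line.

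For the main case $3\mid t$, write $\tau=\ord_3 t\ge 1$. Following the pattern of equation (\ref{ceq}), I would split the sum $\sum_{f\mid t}$ according to $\rho=\ord_3 f$: the terms with $\rho=\tau$ (i.e.\ $f\mid t$, $f\nmid t/3$, $3^\tau\|f$) contribute the ``$\tau=\rho$'' value, those with $\rho=\tau-1$ contribute the ``$\tau=\rho+1$'' value, and those with $\rho<\tau-1$ (i.e.\ $f\mid t/9$) contribute $4$. Since $p\equiv 1\pmod 3$ and $3\mid t$ we have $s^2-4p\equiv -4p\equiv -4\equiv 2\pmod 3$ is impossible... rather $s^2\equiv 0$, $4p\equiv 1$, so $s^2-4p\equiv -1\equiv 2\pmod 3$; writing $s^2-4p=t^2D$ with $3\mid t$ gives $9\mid s^2-4p$, so $D$ can be anything mod $3$, but in fact the standard argument (as in the $\ell=3$ section, $3\mid d$ with $p=(c^2+3d^2)/4$) pins down which residue $D$ takes. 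I would instead just carry all three possible values of $D\bmod 3$ symbolically and verify at the end that the coefficient $12$ emerges uniformly — this is the computational heart of the argument.

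Concretely, using $c$-values $(2,5,4)$ for $D\equiv 1$, $(0,3,4)$ for $D\equiv 2$, $(0,4,4)$ for $D\equiv 0$, and abbreviating $A_\rho:=\sum_{3^\rho\| f,\, f\mid t}\hs{(s^2-4p)/f^2}$ so that $\sum_{f\mid t}\hs{(s^2-4p)/f^2}c(s,f,9)=c_{\tau=\rho}A_\tau+c_{\tau=\rho+1}A_{\tau-1}+4\sum_{\rho\le\tau-2}A_\rho$, I would apply Theorem \ref{coxlem} exactly as in Lemma \ref{removec} to rewrite the combinations of $A_\rho$'s as $H^*$ and $H^*((s^2-4p)/9)$ terms. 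The key identities are $H^*(s^2-4p)=\sum_\rho A_\rho$ and, from Cox's theorem applied with $\iota=3$ (and its square), the relation $3(1-\leg{D}{3}\tfrac13)A_\tau + 3A_{\tau-1}=\sum_{f\mid t}\hs{9(s^2-4p)/f^2}=H^*((s^2-4p)/9)\cdot(\text{shift})$, i.e.\ the scaled class numbers telescope. After substituting and collecting, the $H^*(s^2-4p)$ contributions must cancel (they do in the $\ell=3$ case only partially, but here the claim is the coefficient is $0$) and the $H^*((s^2-4p)/9)$ coefficient should come out to $12$ in all three residue classes of $D$.

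The main obstacle I anticipate is precisely this last bookkeeping step: making sure the Legendre-symbol factors $\leg{D}{3}\in\{1,-1,0\}$ that enter through Cox's formula combine with the case-dependent $c$-values so that (i) the bare $H^*(s^2-4p)$ term drops out entirely and (ii) the surviving coefficient is the clean constant $12$ independent of $D\bmod 3$. It is worth double-checking whether the congruence constraints (namely that $p\equiv 1\pmod 3$ and $3\mid t$ actually do restrict $D\bmod 3$, e.g.\ forcing $3\nmid D$ so only the first two columns of the $c$-table are ever used) simplify matters — if so, only the $D\equiv 1,2$ rows are relevant and the identity $2A_\tau+5A_{\tau-1}+4\sum_{\rho\le\tau-2}A_\rho$ versus $0\cdot A_\tau+3A_{\tau-1}+4\sum A_\rho$ plus the Cox relation is all that needs to be reconciled with $12H^*((s^2-4p)/9)$. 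Finally I would convert back from $H^*$ to $H$ as needed (noting $p\equiv 1\pmod3$ means the $d=-3$ correction at $j=0$ may reappear, handled exactly as before), which I expect to be routine given the earlier discussion in Section \ref{sec:trace}.
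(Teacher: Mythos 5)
Your proposal is essentially the paper's own proof: the $3\nmid t$ case is killed by the congruence $D\equiv t^2D\equiv s^2-4p\equiv 0,2\pmod 3$ (note $D\equiv 0\pmod 3$ does occur, not only $D\equiv 2$, but the table gives $c(s,f,9)=0$ in both residues, so your conclusion stands), and for $3\mid t$ one splits the sum by $\ord_3 f$ and applies Theorem \ref{coxlem} exactly as you describe. The bookkeeping you defer does close uniformly: after folding the $\ord_3 f=\ord_3 t$ terms down one level by Cox, the combined weights $\{6,3,4\}$ for $D\equiv 1,2,0\pmod 3$ meet the Cox factors $3-\leg{D}{3}\in\{2,4,3\}$, and the $f\mid t/9$ terms contribute $4\cdot 3$, so every branch yields the coefficient $12$ of $H^*\!\left(\frac{s^2-4p}{9}\right)$.
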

\begin{proof}
Consider first the case where $3\nmid t$. Then $\ord_3 f=\ord_3 t=0$. Also, $s^2-4p\equiv 0,2 \pmod{3}$, so if $3\nmid t$ this implies that $t^2D\equiv D\equiv 0,2\pmod{3}$. In either case, $c(s,f,9)=0$ for all $f|t$. This shows that the whole term is 0, agreeing with the lemma.

Now we assume that $3|t$. When $D\equiv 1\pmod{3}$, applying \ref{coxlem} in the second equality below yields the following:
$$2\sum_{f|t, f\nmid t/3} \hs{\frac{s^2-4p}{f^2}}=2\sum_{f|t/3, f\nmid t/9} \hs{\frac{s^2-4p}{(3f)^2}}=\sum_{f|t/3, f\nmid t/9} \hs{\frac{s^2-4p}{f^2}}.$$
This shows that
$$\sum_{f|t} \hs{\frac{s^2-4p}{f^2}}c(s,f,9)=\sum_{f|t/3, f\nmid t/9}\hs{\frac{s^2-4p}{f^2}}\left\{
                                                                       \begin{array}{c}
                                                                         6 \textrm{ if $D\equiv 1 \pmod{3}$} \\
                                                                         3 \textrm{ if $D\equiv 2 \pmod{3}$}\\
                                                                         4 \textrm{ if $D\equiv 0 \pmod{3}$}\\
                                                                       \end{array}
                                                                     \right\}+ 4 \sum_{f|t/9}\hs{\frac{s^2-4p}{f^2}}.$$
Now apply \ref{coxlem} to both terms above. The first one becomes:
$$\sum_{f|t/3, f\nmid t/9}\hs{\frac{s^2-4p}{f^2}}\left\{\begin{array}{c} 6  \\ 3 \\   4 \\ \end{array} \right\}
=\sum_{f|t/3, f\nmid t/p} \hs{\frac{s^2-4p}{(3f)^2}} \left\{\begin{array}{c} 6  \\ 3 \\   4 \\ \end{array} \right\}\cdot \left\{\begin{array}{c} 2  \\ 4 \\   3 \\ \end{array} \right\}=12\sum_{f|t/3 f\nmid t/9} \hs{\frac{s^2-4p}{(3f)^2}}.$$
The second becomes
$$4\sum_{f|t/9}\hs{\frac{s^2-4p}{f^2}}=12\sum_{f|t/9}\hs{\frac{s^2-4p}{(3f)^2}}.$$
Replacing these two quantities into the expression above and combining the sums gives
$$\sum_{f|t} \hs{\frac{s^2-4p}{f^2}}c(s,f,9)=12\sum_{f|t/3}\hs{\frac{s^2-4p}{(3f)^2}}=12H^*\left(\frac{s^2-4p}{9}\right).$$
\end{proof}

Now we have (using the definitions of $\epsilon_4$, $\epsilon_3$, $a$, $c$ in equations (\ref{ep3}), (\ref{ep4}) to rewrite the expressions in $H^*$ in terms of $H$)
\begin{eqnarray*}
\tr_k(\Gamma_0(9),p)&=&-6\sum_{0<|s|<2\sqrt{p},\atop 3|t}G_k(s,p)H^*\left(\frac{s^2-4p}{9}\right)-4+\delta(k)(1+p)\\
&=&-6\sum_{0<|s|<2\sqrt{p},\atop 3|t}G_k(s,p)H\left(\frac{s^2-4p}{9}\right)
+12 \cdot \frac{1}{2} G_k(2a,p)\delta_4(p)+12\cdot \frac{2}{3} G_k(c,p)-4 +\delta(k)(1+p)\\
&=& -12\sum_{0<|s|<2\sqrt{p}} G_k(s,p)N_{3\times 3}(s)+6G_k(2a,p)\delta_4(p)+8G_k(c,p)-4 +\delta(k)(1+p)
\end{eqnarray*}
where $\delta_4(p)=1$ if $p\equiv 1\pmod{4}$ and 0 otherwise.  Keeping notation as in Section \ref{sec:level3}, this is equal to:
\begin{eqnarray*}
&=&-3\sum_{0<|s|<2\sqrt{p}}G_k(s,p)\left(4|J_{3\times 3}(s)|+|J_{3\times 3}^0(s)|+2|J_{3\times 3}^{1728}(s)|+3|J_{3\times 3}^0(s)|+2|J_{3\times 3}^{1728}(s)\right)\\
&& +6G_k(2a,p)+8G_k(c,p)-4 +\delta(k)(1+p)\\
&=&-3\sum_{t\in \F_p, \Delta(E_t)\neq 0, \atop t^3-27t^2 \textrm{ a cube }} G_k(a_p(E_t),p)-G_k(c,p)-4 +\delta(k)(1+p)\\
&=&-3\sum_{t=2, \atop 1-t \textrm{ a cube }}^{p-1} G_k(a_p(E_{27/t}),p)-G_k(c,p)-4 +\delta(k)(1+p)\\
&=&-3\sum_{t=2, \atop 1-t \textrm{ a cube }}^{p-1} G_k\left(p\Fha{\rho}{\rho^2}{\epsilon}{t}_p,p\right)-G_k(c,p)-4 +\delta(k)(1+p).
\end{eqnarray*}

Now apply the transformation law in Theorem 4.4 in Greene \cite{greene}, to write this as
\begin{eqnarray*}
&=&-3\sum_{t=2, \atop 1-t \textrm{ a cube }}^{p-1} G_k\left(p\Fha{\rho}{\rho^2}{\epsilon}{1-t}_p,p\right)-G_k(c,p)-4 +\delta(k)(1+p) \nonumber\\
&=&-3\sum_{t=2, \atop t \textrm{ a cube }}^{p-1} G_k\left(p\Fha{\rho}{\rho^2}{\epsilon}{t}_p,p\right)-G_k(c,p)-4 +\delta(k)(1+p).
\end{eqnarray*}
This gives the  following expression for the trace formula.
\begin{thm}\label{tracefor9}Let $p\equiv 1\pmod{3}$ and $k\geq 4$. Then the trace of the $p$th Hecke operator on $S_k(\Gamma_0(9))$ is given by the expression
$$\tr_k(\Gamma_0(9),p)=-\sum_{t=2, \atop t^3\neq 1}^{p-1}  G_k\left(p\Fha{\rho}{\rho^2}{\epsilon}{t^3}_p,p\right)-G_k(c,p)-4 +\delta(k)(1+p).$$
\end{thm}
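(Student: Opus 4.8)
The plan is to run exactly the machine of Section~\ref{sec:level3} with level $9$ in place of level $3$; since the statement assumes $p\equiv 1\pmod 3$ I work under that hypothesis throughout (when $p\equiv 2\pmod 3$ one has $\ord_3 f=\ord_3 t=0$ for every relevant $f$, so $c(s,f,9)=1+\leg{D}{3}=c(s,f,3)$ and the computation reduces to the one in Section~\ref{sec:level32}). Starting from Hijikata's formula (Theorem~\ref{hijtrace}) with $\ell=9$, the first task is to establish the case analysis for $c(s,f,9)$ in terms of $\ord_3 t$, $\ord_3 f$ and $D\bmod 3$ (the Proposition stated above), directly from the definition of $c(s,f,N)$ in~\cite{hij1}; then, just as in Lemma~\ref{removec}, iterate Cox's identity (Theorem~\ref{coxlem}) to collapse $\sum_{f\mid t}h^*\left(\frac{s^2-4p}{f^2}\right)c(s,f,9)$ into $12\,H^*\left(\frac{s^2-4p}{9}\right)$ when $3\mid t$, and into $0$ otherwise (here the hypothesis $p\equiv1\pmod3$ is what forces $D\not\equiv1\pmod3$ when $3\nmid t$). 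This reduces the trace to $-6\sum_{0<|s|<2\sqrt p,\,3\mid t}G_k(s,p)\,H^*\left(\frac{s^2-4p}{9}\right)-4+\delta(k)(1+p)$.

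The middle block is a transcription of Section~\ref{sec:level3}. First I would pass from $H^*$ to $H$: the two agree except at the discriminants $-3$ and $-4$, whose occurrence is governed by $p\bmod 3$ and $p\bmod 4$; under $p\equiv1\pmod 3$ the discriminant $-3$ always occurs and $-4$ occurs iff $p\equiv1\pmod4$, and carrying the factor $12$ through produces the explicit correction $6\,G_k(2a,p)\,\delta_4(p)+8\,G_k(c,p)$. Next, Lemma~\ref{div} with $\ell=3$ together with Schoof's theorem (Theorem~\ref{schoof1}) gives $H\left(\frac{s^2-4p}{9}\right)=N_{3\times3}(s)+N_{3\times3}(-s)$, so the main term becomes $-12\sum_{0<|s|<2\sqrt p}G_k(s,p)\,N_{3\times3}(s)$. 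I then reuse, verbatim, the preimage count for the map $\phi_s\colon L(s)\to I_3(s)$, $t\mapsto[E_t]$ established in Section~\ref{sec:level3} (four preimages over $J_{3\times3}(s)$, one over $J^0_{3\times3}(s)$, two over $J^{1728}_{3\times3}(s)$): writing $12\,N_{3\times3}(s)=3\left(4|J_{3\times3}(s)|+|J^0_{3\times3}(s)|+2|J^{1728}_{3\times3}(s)|\right)+3\left(3|J^0_{3\times3}(s)|+2|J^{1728}_{3\times3}(s)|\right)$, the first bracket sums over $s$ to $3\sum_{t\,:\,\Delta(E_t)\neq0,\ t^3-27t^2\text{ a cube}}G_k(a_p(E_t),p)$ — the members of the $E_t$ family with full $3$-torsion — while the second, by Lemma~\ref{numb}, equals $9\,G_k(c,p)+6\,G_k(2a,p)\,\delta_4(p)$, which cancels the $H^*\to H$ correction down to a single $-G_k(c,p)$ and leaves the constant $-4+\delta(k)(1+p)$.

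Finally I put this in hypergeometric form. The substitution $t\mapsto 27/t$ turns the condition that $t^3-27t^2$ be a cube on $\F_p\setminus\{0,27\}$ into the condition that $1-t$ be a cube on $\{2,\dots,p-1\}$, and since $27a_1^{-3}a_3=t$ for the curve $E_{27/t}$, Theorem~\ref{hyper} gives $a_p(E_{27/t})=-p\,\Fha{\rho}{\rho^2}{\epsilon}{t}_p$; because $k$ is even, $G_k(\cdot,p)$ is an even function of its first argument, so the trace is now $-3\sum_{t=2,\ 1-t\text{ a cube}}^{p-1}G_k\left(p\,\Fha{\rho}{\rho^2}{\epsilon}{t}_p,\,p\right)-G_k(c,p)-4+\delta(k)(1+p)$. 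Applying the transformation of Greene's Theorem~4.4 (\cite{greene}) that sends the hypergeometric argument $x$ to $1-x$, then reindexing $t\mapsto 1-t$ to turn the condition that $1-t$ be a cube into the condition that $t$ be a cube, and finally writing each nonzero non-unit cube as $t^3$ (a $3$-to-$1$ reparametrisation that absorbs the factor $-3$ into $-1$), yields precisely the asserted formula. The step I expect to be the real obstacle is the first one together with the $j\in\{0,1728\}$ bookkeeping: pinning down the $c(s,f,9)$ table, then threading it through the iterated Cox identities, the $H^*\to H$ corrections, and the extra-automorphism classes of Lemma~\ref{numb} so that every stray $G_k(2a,p)$ cancels and the final constant comes out as exactly $-G_k(c,p)-4+\delta(k)(1+p)$; the rest is a direct replay of the level-$3$ argument.
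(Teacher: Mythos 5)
Your proposal is correct and follows essentially the same route as the paper: Hijikata with $\ell=9$, the $c(s,f,9)$ case table collapsed via Cox's index formula to $12H^*\bigl(\frac{s^2-4p}{9}\bigr)$ for $3\mid t$, the $H^*\to H$ corrections $6G_k(2a,p)\delta_4(p)+8G_k(c,p)$, Schoof plus Lemma \ref{div} to get $N_{3\times 3}$, the same split $12N_{3\times3}(s)=3\bigl(4|J_{3\times3}|+|J^0_{3\times3}|+2|J^{1728}_{3\times3}|\bigr)+3\bigl(3|J^0_{3\times3}|+2|J^{1728}_{3\times3}|\bigr)$ with Lemma \ref{numb} producing the cancellation down to $-G_k(c,p)$, and finally $t\mapsto 27/t$, Theorem \ref{hyper}, Greene's Theorem 4.4 and the $3$-to-$1$ cube reparametrisation. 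The bookkeeping outcomes you assert (including the constant $-4+\delta(k)(1+p)$, where the paper's intermediate display showing $-2$ is a typo) all agree with the paper's own computation.
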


From here we may derive a number of expressions as in the level 3 case. For example, using Lemma \ref{ga},  it follows that when $k\geq 4$,
$$\tr_k(\Gamma_0(9),p)=-\sum_{t=1 \atop t^3 \neq 27}^{p-1} a_{p^{k-2}}(E_{t^3})-a_{p^{k-2}}(E_{24})-4.$$
Also, using Equation (\ref{ta}) we can write when $k\geq 4$
\begin{eqnarray*}
\tr_k(\Gamma_0(9),p)&=&\sum_{i=0}^{k/2-2}\sum_{t=2 \atop t^3 \neq 1}^{p-1}p^{k-2-i}\Fha{\rho}{\rho^2}{\epsilon}{t^3}_{p^{k-2-2i}}\\
&&+\sum_{i=0}^{k/2-2}p^{k-2-i}\Fha{\rho}{\rho^2}{\epsilon}{9 \cdot 8^{-1}}_{p^{k-2-2i}}-4-p^{k/2-1}(p-1).
\end{eqnarray*}
Finally, arguing as in Section \ref{sec:inductive} we derive an inductive formula for all $k\geq 6$:
$$\tr_k(\Gamma_0(9),p)=p^{k-2}\sum_{t=2 \atop t^3\neq 1}^{p-1}\Fha{\rho}{\rho^2}{\epsilon}{t^3}_{p^{k-2}}+p^{k-2}\Fha{\rho}{\rho^2}{\epsilon}{9 \cdot 8^{-1}}_{p^{k-2}}-4+4p+p\cdot \tr_{k-2}(\Gamma_0(9),p).$$

\subsection{Proof of Corollary \ref{fouriercoeff}}\label{eta}

Let
$$\eta(3z)^8=\sum b(n) q^n, \ q=e^{2\pi i z}$$
be the Fourier expansion of the unique Hecke eigenform in $S_4(\Gamma_0(9))$. We now prove Corollary \ref{fouriercoeff}, which states that the Fourier coefficients of $\eta(3z)^8$ when $p\equiv 1\pmod{3}$ are given by  the expression
$$b(p)=-p^3\left(\bin{\rho^2}{\rho}^3+\bin{\rho}{\rho^2}^3\right)=-p^3\Fha{\rho}{\rho^2}{\epsilon}{9\cdot 8^{-1}}_{p^3}.$$

\begin{proof}
We actually begin with the alternate trace formula expression from Theorem \ref{tracefor9} and derive the corollary from this. Applying Theorem \ref{tracefor9} with $k=4$ and noting that the dimension for $S_4(\Gamma_0(9))$ is one, we can write
\begin{align*}
b(p)=&-\sum_{t=1}^{p-1}G_4\left(p \Fha{\rho}{\rho^2}{\epsilon}{t},p\right)(\rho^2(t)+\rho(t)+1)-G_4(c,p)-4-3G_4\left(p\Fha{\rho}{\rho^2}{\epsilon}{1},p\right)\\
=&-\sum_{t=1}^{p-1}p^2 \Fha{\rho}{\rho^2}{\epsilon}{t}^2\left(\rho(t)+\rho^2(t)+1\right)-c^2+p^2-3p-1.
\end{align*}

Now compute the term $\sum_{t=1}^{p-1}p^2 \Fha{\rho}{\rho^2}{\epsilon}{t}^2\rho(t)$. Use Definition 3.5 and Theorem 3.6 of Greene \cite{greene}, which in our case (switching $A$ and $B$ in the definition) states that
$$\Fha{\rho}{\rho^2}{\epsilon}{t}=\frac{1}{p}\sum_{y\in \F_p}\rho(y)\rho^2(1-y)\rho(1-ty).$$
Then
\begin{align*}
\sum_{t=1}^{p-1}p^2 \Fha{\rho}{\rho^2}{\epsilon}{t}^2\rho(t)=&\sum_{t=1}^{p-1}p\cdot  \Fha{\rho}{\rho^2}{\epsilon}{t}\sum_{y} \rho(y)\rho^2(1-y)\rho(1-ty)\rho(t)\\
=&p^2 \sum_{y=1}^{p-1} \rho^2(1-y)\left(\frac{1}{p}\sum_{t=1}^{p-1}\Fha{\rho}{\rho^2}{\epsilon}{t}\rho(ty)\rho(1-ty)\right)\\
=&p^2 \sum_{y=1}^{p-1} \rho^2(1-y)\left(\frac{1}{p}\sum_{t=1}^{p-1}\Fha{\rho}{\rho^2}{\epsilon}{ty^{-1}}\rho(t)\rho(1-t)\right).
\end{align*}
Now apply Theorem 3.13 of \cite{greene}, which gives an inductive definition of hypergeometric series.
This gives that the above is equal to
\begin{align*}
p^2\sum_{y=1}^{p-1}\rho^2(1-y)\Fhth{\rho}{\rho^2}{\rho}{\epsilon}{\rho^2}{y^{-1}}=& p^2\sum_{y=1}^{p-1}\rho^2(1-y^{-1})\Fhth{\rho}{\rho^2}{\rho}{\epsilon}{\rho^2}{y}\\
=&p^2\sum_{y=1}^{p-1}\rho(y)\rho^2(1-y)\Fhth{\rho}{\rho^2}{\rho}{\epsilon}{\rho^2}{y^{-1}}\\
=&p^3\Fhf{\rho}{\rho^2}{\rho}{\rho}{\epsilon}{\rho^2}{\epsilon}{1}.
\end{align*}
By the same method we can show that
\begin{equation}
\sum_{t=1}^{p-1}p^2\Fha{\rho}{\rho^2}{\epsilon}{t}^2\rho^2(t)=p^3 \Fhf{\rho}{\rho^2}{\rho^2}{\rho^2}{\epsilon}{\rho}{\epsilon}{1},
\end{equation}
and
\begin{equation}\label{norho}
\sum_{t=1}^{p-1}p^2\Fha{\rho}{\rho^2}{\epsilon}{t}^2=p^3 \Fhf{\rho}{\rho^2}{\epsilon}{\epsilon}{\epsilon}{\rho^2}{\rho}{1}.
\end{equation}
We reduce (\ref{norho}) further using identity 2.15 from \cite{greene}
\begin{equation}\label{greeneid}
\bin{A}{B}\bin{C}{A}=\bin{C}{B}\bin{C\bar{B}}{A\bar{B}}-\frac{p-1}{p^2}B(-1)\delta(A)=\frac{p-1}{p^2}AB(-1)\delta(B\bar{C}),
\end{equation}
where $\delta(A)=1$ if $A=\epsilon$ and 0 otherwise.
When $\chi\neq \epsilon, \rho, \rho^2$, applying equation (\ref{greeneid}) and using Jacobi sum identities gives the equality
$$\bin{\rho \chi}{\chi}\bin{\chi}{\rho \chi}\bin{\rho^2 \chi}{\chi}\bin{\chi}{\rho^2 \chi}=\frac{1}{p^2}.$$ There are $p-4$ such terms.
We must consider the exceptional three cases separately. We have shown so far that:
\begin{align*}
p^3 \Fhf{\rho}{\rho^2}{\rho^2}{\rho^2}{\epsilon}{\rho}{\epsilon}{1}=&\frac{p^4}{p-1}\sum_{\chi} \bin{\rho \chi}{\chi}\bin{\chi}{\rho \chi}\bin{\rho^2 \chi}{\chi}\bin{\chi}{\rho^2 \chi}\\
=&\frac{p^4}{p-1} \bin{\rho}{\epsilon}\bin{\epsilon}{\rho}\bin{\rho^2}{\epsilon}\bin{\epsilon}{\rho^2}
+\frac{p^4}{p-1}\bin{\rho^2}{\rho}\bin{\rho}{\rho^2}\bin{\epsilon}{\rho}\bin{\rho}{\epsilon}\\
&+\frac{p^4}{p-1}\bin{\epsilon}{\rho^2}\bin{\rho^2}{\epsilon}\bin{\rho}{\rho^2}\bin{\rho^2}{\rho} +\frac{p^2(p-4)}{p-1}\\
=&\frac{p^2(p-4)}{p-1}+ \frac{1}{p-1}+ \frac{2p^2}{p-1}\bin{\rho^2}{\rho}\bin{\rho}{\rho^2}\\
=&\frac{p^3-4p^2+2p+1}{p-1}=p^2-3p-1.
\end{align*}
Combining these results, we have
$$b(p)=-p^3\Fhf{\rho}{\rho^2}{\rho^2}{\rho^2}{\epsilon}{\epsilon}{\rho}{1}-p^3\Fhf{\rho^2}{\rho}{\rho}{\rho}{\epsilon}{\epsilon}{\rho^2}{1}-c^2.$$
Reducing this further, we use 2.15 again to write
$$p^3\Fhf{\rho}{\rho^2}{\rho^2}{\rho^2}{\epsilon}{\epsilon}{\rho}{1}=p^3\bin{\rho^2}{\rho}\Fhth{\rho^2}{\rho^2}{\rho^2}{\epsilon}{\epsilon}{1}
-p^2\bin{\rho}{\rho^2}{\bin{\rho}{\rho^2}}$$
and similarly
$$p^3\Fhf{\rho^2}{\rho}{\rho}{\rho}{\epsilon}{\epsilon}{\rho^2}{1}=p^3\bin{\rho}{\rho^2}\Fhth{\rho}{\rho}{\rho}{\epsilon}{\epsilon}{1}
-p^2\bin{\rho^2}{\rho}{\bin{\rho^2}{\rho}}.$$

Now, we can evaluate these hypergeometric series using Theorem 4.35 from \cite{greene}. Using this, we have
$$\Fhth{\rho^2}{\rho^2}{\rho^2}{\epsilon}{\epsilon}{1}=\bin{\rho^2}{\rho}\bin{\rho^2}{\rho}-\frac{1}{p}\bin{\rho}{\rho^2}$$
$$\Fhth{\rho}{\rho}{\rho}{\epsilon}{\epsilon}{1}=\bin{\rho}{\rho^2}\bin{\rho}{\rho^2}-\frac{1}{p}\bin{\rho^2}{\rho}.$$

So
\begin{align*}
b(p)=&-p^3\bin{\rho^2}{\rho}^3+p^2\bin{\rho^2}{\rho}\bin{\rho}{\rho^2}+p^2\bin{\rho}{\rho^2}\bin{\rho}{\rho^2}\\
&-p^3\bin{\rho}{\rho^2}^3+p^2\bin{\rho}{\rho^2}\bin{\rho^2}{\rho}+p^2\bin{\rho^2}{\rho}\bin{\rho^2}{\rho}-c^2\\
=&-p^3\left(\bin{\rho}{\rho^2}^3+\bin{\rho^2}{\rho}^3\right)+p^2\left(\bin{\rho}{\rho^2}+\bin{\rho^2}{\rho}\right)^2-c^2.
\end{align*}

Finally, recall that $c$ is  the trace of Frobenius of the curve $E: y^2+y=x^3$, which we computed in equation (\ref{fro})  and is given by
\begin{equation}\label{csum}
c=-\frac{1}{p}G_{\frac{p-1}{3}}^3-\frac{1}{p}G_{\frac{2(p-1)}{3}}^3=-p\bin{\rho}{\rho^2}-p\bin{\rho^2}{\rho}.
\end{equation}
Using this in the equation above, we have that
$$b(p)=-p^3\left(\bin{\rho}{\rho^2}^3+\bin{\rho^2}{\rho}^3\right).$$

For the second equality in Corollary \ref{fouriercoeff}, let $\alpha=-p\bin{\rho^2}{\rho}$. Then by equation (\ref{csum}), $t_p(E)=\alpha+\bar{\alpha}$ and $\alpha\bar{\alpha}=p$.   It follows then that $t_{p^3}(E)=\alpha^3+\bar{\alpha}^3=b(p)$.  Theorem \ref{hyper} now implies that $$b(p)=t_{p^3}(E)=-p^3\Fha{\rho}{\rho^2}{\epsilon}{9\cdot 8^{-1}}_{p^3}.$$

\end{proof}

\section{A Modular Threefold}\label{sec:modularthreefold}
Now let $V$ be the threefold defined by the equation
$$x^3=y_1y_2y_3(y_1+1)(y_2+1)(y_3+1),$$
and let $N(V,p)$ denote the number of $\bar{\F}_p$-points on $V$. Then we will show that $V$ is ``modular" in the sense that the number of points on $V$ can be expressed in terms of the Fourier coefficients of a modular form. In particular, the function $\eta(3z)^8=\sum b(n)q^n$ discussed in Section \ref{eta} has Fourier coefficients given by the expression
\begin{equation}
b(p)=p^3+3p^2+1-N(V,p).
\end{equation}

We consider first the case where $p\equiv 1 \pmod{3}$. Recall that $E: y^2+y=x^3$ is an elliptic curve with $j$-invariant 0 and trace of Frobenius $t_p(E)=c=-p\cdot \Fha{\rho}{\rho^2}{\epsilon}{9\cdot 8^{-1}}_p$, where $c$ satisfies $c^2-4p=-3d^2$ and $3|d$. (The expression $t_p(E)=-p \cdot \Fha{\rho}{\rho^2}{\epsilon}{9 \cdot 8^{-1}}$ follows from the fact that $E$ is isomorphic to the curve $y^2+24xy+24^2y^2=x^3$ and Theorem \ref{hyper}.) Then $\tr_4(\Gamma_0(9),p)=b(p)$ and by Corollary \ref{fouriercoeff},
$$b(p)= -p^3\left(\bin{\rho^2}{\rho}^3+\bin{\rho}{\rho^2}^3\right)=t_{p^3}(E),$$
when $p\equiv 1\pmod{3}$.  We may rewrite this expression as
$$b(p)=t_{p^3}(E)=t_p(E)^3-3p t_p(E)=c^3-4cp=c \cdot \left(\frac{-c+3d}{2}\right)\cdot\left(\frac{-c-3d}{2}\right).$$
Let $\beta \in \F_p^*$ be a noncube and consider the three curves
\begin{eqnarray*}
E_1:& y^2+y&=x^3\\
E_2:& y^2+\beta y&= x^3\\
E_3:& y^2+\beta^2 y&= x^3.
\end{eqnarray*}
Then $E_2$ and $E_3$ are two cubic twists of $E_1$. The theory of elliptic curves tells us that $t_p(E_1)=c$ as before and $t_p(E_2)=\frac{-c+3d}{2}$, $t_p(E_3)=\frac{-c-3d}{2}$. This gives
$$b(p)=t_p(E_1)t_p(E_2)t_p(E_3).$$
Write $N_i=p+1-t_p(E_i)$ for the number  of projective points of $E_i$. We may write $b(p)$ as an expression in these by
\begin{equation}\label{eq:traceell}
b(p)=p^3+1-N_1N_2N_3.
\end{equation}
Define the sets:
\begin{align*}
W=&\{(y_1,y_2,y_3,x)\in \F_p^4: y_1y_2y_3(1+y_1)(1+y_2)(1+y_3)=x^3\}\\
V_1=&\{((y_1,x_1),(y_2,x_2),(y_3,x_3))\in \F_p^6: y_i^2+y_i=x_i^3,\  i=1,2,3 \textrm{ and } y_i \neq 0,-1\}\\
V_2=&\{((y_1,x_1),(y_2,x_2),(y_3,x_3))\in \F_p^6: y_i^2+y_i=\beta x_i^3,\  i=1,2,3 \textrm{ and } y_i \neq 0,-1\}\\
V_3=&\{((y_1,x_1),(y_2,x_2),(y_3,x_3))\in \F_p^6: y_i^2+y_i=\beta^2 x_i^3,\  i=1,2,3 \textrm{ and } y_i \neq 0,-1\}\\
V_4=&\{((y_1,x_1),(y_2,x_2),(y_3,x_3))\in \F_p^6: y_i^2+y_i=\beta^{j_i}x_i^3,\  i=1,2,3 \  j_1+j_2+j_3=3, j_{i}\neq j_{k} \textrm{ and } y_i \neq 0,-1\}.
\end{align*}

Then its not hard to see that $\#V_i=(N_i-3)^3$ for $i=1,2,3$ and $\#V_4=6(N_1-3)(N_2-3)(N_3-3)$ (subtracting 2 from $N_i$ to remove the points corresponding to $y=0,-1$ and an extra one to count only affine points). The number of elements in $W$ is then expressible as
\begin{align*}
\#W=&(\#V_1+\#V_2+\#V_3+\#V_4)/9+6(p-2)^2+12(p-2)+8\\
=&\left((N_1-3)^3+(N_2-3)^3+(N_3-3)^3+6(N_1-3)(N_2-3)(N_3-3)\right)/9+6(p-2)^2+12(p-2)+8\\
=&N_1N_2N_3-1.
\end{align*}
Using this expression in equation (\ref{eq:traceell}), we see that the trace is  given by
\begin{equation}\label{eq:traceell2}
b(p)=p^3-\#W.
\end{equation}

Now we compute the value $N(W,p)$, the number of projective points on $V$, in terms of $\#W$.  Begin by homogenizing the equation:
$$y_1y_2y_3(y_1+z)(y_2+z)(y_3+z)=x^3z^3.$$
The points corresponding to $z=0$ are on the curve
$$y_1^2y_2^2y_3^3=0.$$

First we count the points where $x\neq 0$, (so fix $x=1$). At least one $y_i$ must be zero and the other two can be anything. There are exactly $3(p-1)^2+3(p-1)+1$ possible choices for $y_1,y_2,y_3$.

Now, we count the points corresponding to $x=0$. We choose  one of the $y_i$ to be zero (three choices) and there are $\frac{(p-1)^2}{(p-1)}$ values for the other $y_i$'s. There are another 3 points corresponding to when exactly two of the $y_i$ are 0. So the number  of projective points is
$$N(V,p)=\#W+ 3(p-1)^2+3(p-1)+1+3(p-1)+3=\#W+1+3p^2.$$
Finally, combining the above with equation \ref{eq:traceell2} it follows that, when $p\equiv 1\pmod{3}$,
$$b(p)=p^3-\#W=p^3+3p^2+1-N(V,p).$$

If however, $p\equiv 2\pmod{3}$, then every element of $\F_p$ is a cube, and so for any choice of $y_1,y_2,y_3$  there is a unique $x$ satisfying equation \ref{eq:threefold}.  There are $p^3$ such choices for the $y_i's$, so $\#W=p^3$. Since $b(p)=0$ for all $p\equiv 2\pmod{3}$, it follows that
$$b(p)=0=p^3-\#W=p^3+3p^2+1-N(V,p).$$


\section{Expressing the number of points on $E$ as a Gaussian hypergeometric function}\label{sec:gh}
Again we have $q=p^e \equiv 1 \pmod{3}$, $p>3$ a prime. Let $E_{a_1,a_3}$ be the curve $y^2+a_1xy+a_3y=x^3$, where $a_1,a_3 \in \Z$ and assume that $E_{a_1,a_3}$ has good reduction modulo $p$.  Write $\tilde{E}_{a_1,a_3}$ for the reduction modulo $p$ and $\#\tilde{E}_{a_1,a_3}(\F_q)$ for the number of projective points of $\tilde{E}_{a_1,a_3}$ in $\F_q$. We next prove Theorem \ref{hyper}, which expresses the trace of the Frobenius map on $\tilde{E}_{a_1,a_3}(\F_q)$ as a special value of a Gaussian hypergeometric function. We begin as in \cite{fuselier} by expressing the number of points as an exponential sum. Note that the cited lemmas from \cite{fuselier} are given there for the prime field case, but they can be easily extended to $q$ a power of a prime.

\begin{proof}[Proof of Theorem \ref{hyper}]

If we let $$P(x,y)=y^2+a_1xy+a_3y-x^3$$
 then
$$\#\tilde{E}_{a_1,a_3}(\F_q)-1=\#\{(x,y)\in \F_q \times \F_q: P(x,y)=0\}.$$
  Define the additive character $\theta: \F_q \to \C^*$ by
\begin{equation}
\theta(\alpha)=\zeta^{\tr(\alpha)}
\end{equation}
 where  $\zeta=e^{2\pi i/p}$ and $\tr: \F_q \to \F_p$ is the trace map, ie $\tr(\alpha)=\alpha + \alpha^p+\alpha^{p^2}+...+\alpha^{p^{e-1}}$. We will repeatedly use the elementary identity \cite{ire}
\begin{equation}\label{thetasum}
\sum_{z\in \F_q}\theta(zP(x,y))=\left\{
                                    \begin{array}{ll}
                                      q & \hbox{if $P(x,y)=0$,} \\
                                      0 & \hbox{if $P(x,y)\neq0$.}
                                    \end{array}
                                  \right.
\end{equation}

Using this  we  write
\begin{eqnarray*}
q(\#\tilde{E}_{a_1,a_3}(\F_q)-1)&=&\sum_{z \in \F_q}\sum_{x,y \in \F_q}\theta(z P(x,y))\\
&=& q^2+(q-1)+\underbrace{\sum_{z \in \F_q^*}\sum_{x\in \F_q^*}\theta(-zx^3)}_{B}
+\underbrace{\sum_{z\in \F_q^*}\sum_{y\in \F_q^*}\theta(zy^2)\theta(za_3y)}_C+\underbrace{\sum_{x,y,z \in \F_q^*}\theta(z P(x,y))}_D.
\end{eqnarray*}

We can compute these sums using the following lemma from \cite{fuselier}:
\begin{lem}[\cite{fuselier}, Lemma 3.3]\label{theta} For all $\alpha \in \F_q^*$,
$$\theta(\alpha)=\frac{1}{q-1}\sum_{m=0}^{q-2}G_{-m}T^m(\alpha),$$ where $T$ is a fixed generator of the character group and $G_{-m}$ is the Gauss sum $G_{-m}:=G(T^{-m})=\sum_{x\in \F_q}T^{-m}(x)\theta(x)$.
\end{lem}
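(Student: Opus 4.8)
The plan is to prove this character-expansion identity by multiplicative Fourier analysis on $\F_q^*$: I would treat the restriction of the additive character $\theta$ to $\F_q^*$ as an arbitrary function $\F_q^* \to \C$ and expand it in the orthogonal basis furnished by the multiplicative characters, then read off the coefficients as the stated Gauss sums.

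First I would recall that the $q-1$ powers $T^0, T^1, \ldots, T^{q-2}$ exhaust the character group $\widehat{\F_q^*}$ and satisfy the orthogonality relation $\sum_{\alpha \in \F_q^*} T^m(\alpha)\overline{T^n(\alpha)} = (q-1)\delta_{m,n}$ for $m, n \in \{0, \ldots, q-2\}$, where $\delta_{m,n}$ is the Kronecker delta. Because these characters form a basis for the $(q-1)$-dimensional space of complex-valued functions on $\F_q^*$, the restriction $\theta|_{\F_q^*}$ admits a unique expansion $\theta(\alpha) = \sum_{m=0}^{q-2} c_m T^m(\alpha)$ valid for every $\alpha \in \F_q^*$. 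Pairing both sides against $T^m$ and applying orthogonality then isolates the coefficient as $c_m = \frac{1}{q-1}\sum_{\alpha \in \F_q^*}\theta(\alpha)\overline{T^m(\alpha)}$.

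Next I would identify $c_m$ with the Gauss sum. Since values of multiplicative characters on $\F_q^*$ are roots of unity, $\overline{T^m(\alpha)} = T^{-m}(\alpha)$ for $\alpha \neq 0$, so $c_m = \frac{1}{q-1}\sum_{\alpha \in \F_q^*} T^{-m}(\alpha)\theta(\alpha)$. The only bookkeeping step is to reconcile this with the stated sum $G_{-m} = \sum_{x \in \F_q} T^{-m}(x)\theta(x)$, whose range includes $x=0$: the extension convention $T^{-m}(0) = 0$ (valid even for the trivial character $m=0$, since all characters are extended by $\chi(0):=0$) kills the $x=0$ term, so $G_{-m}$ equals the sum over $\F_q^*$ and hence $c_m = \frac{1}{q-1}G_{-m}$. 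Substituting back yields $\theta(\alpha) = \frac{1}{q-1}\sum_{m=0}^{q-2} G_{-m}\, T^m(\alpha)$ for all $\alpha \in \F_q^*$, as claimed.

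There is no genuine obstacle here, as this is standard multiplicative Fourier inversion; the single point requiring care is the treatment of $\alpha = 0$ and of the $x=0$ term in $G_{-m}$, which is exactly why the identity is asserted only for $\alpha \in \F_q^*$ and why the $\chi(0)=0$ convention must be applied consistently. I would note explicitly that the formula cannot extend to $\alpha = 0$: there every $T^m(0)$ vanishes, so the right-hand side is $0$, whereas $\theta(0) = \zeta^{\tr(0)} = 1$, confirming that the restriction to $\F_q^*$ is essential.
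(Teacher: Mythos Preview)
Your argument is correct and is exactly the standard proof via multiplicative Fourier inversion on $\F_q^*$. Note, however, that the paper does not supply its own proof of this lemma: it is quoted verbatim as \cite{fuselier}, Lemma~3.3, and used as a black box. So there is no ``paper's proof'' to compare against here; your write-up simply fills in the omitted (and entirely routine) justification, including the careful handling of the $x=0$ term and the necessity of restricting to $\alpha\in\F_q^*$.
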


Computing $B$: Use Lemma \ref{theta} to replace $\theta(-zx^3)$, and then apply the orthogonality relation (\ref{thetasum})
\begin{eqnarray*}
B&=&\sum_{z,x\in \F_q^*}\frac{1}{q-1}\sum_m G_{-m}T^m(-x^3)T^m(z)=\frac{1}{q-1}\sum_m G_{-m}\sum_{x\in \F_q^*}T^m(-x^3)\sum_{z\in \F_q^*}T^m(z)\\
&=&\sum_{x\in \F_q^*}G_0=-(q-1).
\end{eqnarray*}

Computing $C$:
\begin{eqnarray*}
C&=&\sum_{z,y \in \F_q^*}\frac{1}{(q-1)^2}\sum_{k,m}G_{-k}G_{-m}T^{k+m}(z)T^{2k+m}(y)T^m(a_3)\\
&=&\frac{1}{(q-1)^2}\sum_{k,m}G_{-k}G_{-m}T^m(a_3)\sum_{z\in \F_q^*}T^{k+m}(z)\sum_{y\in \F_q^*}T^{2k+m}(y)
\end{eqnarray*}

We see here that $k+m=0\implies m=-k$ and $2k+m=0 \implies k=m=0$. So this becomes $G_0^2=1$.

Computing $D$:
\begin{eqnarray*}
D&=&\sum_{x,y,z\in \F_q^*}\frac{1}{(q-1)^4}\sum_{j,k,l,m}G_{-j}G_{-k}G_{-l}G_{-m}T^{j+k+l+m}(z)T^{2j+k+l}(y)T^{k+3m}(x)T^k(a_1)T^{l}(a_3)T^m(-1)\\
&=&\frac{1}{(q-1)^4}\sum_{j,k,l,m}G_{-j}G_{-k}G_{-l}G_{-m}T^m(-1)T^k(a_1)T^l(a_3)\sum_{x \in \F_q^*}T^{k+3m}(x)\sum_{y \in \F_q^*}T^{2j+k+l}(y)\sum_{z\in \F_q^*}T^{j+k+l+m}(z)
\end{eqnarray*}
Solving each of these equations $j+k+l+m=0$, $k+3m=0$, $2j+k+l=0$ gives $k=-3m$ and $l=m=j$. Plugging this  in we have
$$D=\frac{1}{q-1}\sum_m G_{-m}^3G_{3m}T^{-3m}(a_1)T^m(-a_3).$$
Putting this all together then gives
$$q(\#\tilde{E}_{a_1,a_3}(\F_q)-1)=q^2+1+\frac{1}{q-1}\sum_m G_{-m}^3G_{3m}T^{-3m}(a_1)T^m(-a_3)$$
and so $\#\tilde{E}_{a_1,a_3}(\F_q)=1+q+\frac{1}{q}+\frac{1}{q(q-1)}\sum_mG_{-m}^3G_{3m}T^{-3m}(a_1)T^m(-a_3)$ and finally

$$t_q(E_{a_1,a_3})=q+1-\#\tilde{E}_{a_1,a_3}(\F_q)=-\frac{1}{q}-\frac{1}{q(q-1)}\sum_m G_{-m}^3G_{3m}T^{-3m}(a_1)T^m(-a_3).$$

In order to write this as a finite field hypergeometric function, we use the fact that if $T^{m-n}\neq \epsilon$, then
\begin{equation}\label{jacgauss}
 \left({ T^m \atop T^n } \right)=\frac{G_mG_{-n}T^n(-1)}{G_{m-n}q}.
\end{equation}
This is  a restating of the classical identity $G(\chi_1)G(\chi_2)=J(\chi_1,\chi_2)G(\chi_1\chi_2)$ which holds whenever $\chi_1\chi_2$ is a primitive character. We also use the  Davenport-Hasse relation for $q\equiv 1\pmod{3}$. We state the general result as well as the case needed here.

\begin{thm}[Davenport-Hasse Relation \cite{lang}]\label{hasdavgen} Let  $m$ be a positive integer and let $q=p^e$ be a prime power such that $q\equiv 1 \pmod{m}$. Let $\theta$ be the additive character on $\F_q$ defined by $\theta(\alpha)=\zeta^{\tr \alpha}$, where $\zeta=e^{2 \pi i /p}$. For multiplicative characters $\chi, \psi \in \widehat{\F}_q^*$ we have
$$\prod_{\chi^m=1}G(\chi \psi)=-G(\psi^m)\psi(m^{-m})\prod_{\chi^m=1}G(\chi).$$
\end{thm}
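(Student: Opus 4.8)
The plan is to prove the product relation as the finite-field analogue of the Gauss multiplication formula for the Gamma function, carried out entirely with Gauss and Jacobi sums. Since $q\equiv 1\pmod m$ we have $m\mid q-1$, hence $\gcd(m,p)=1$, so $m$ is a unit of $\F_q$ and there is a character $\nu\in\widehat{\F_q^*}$ of exact order $m$; the characters $\chi$ with $\chi^m=\epsilon$ are then precisely $\nu^0,\dots,\nu^{m-1}$. First I would record the elementary reductions: with the convention $\epsilon(0)=0$ one has $G(\epsilon)=-1$, so $\prod_{\chi^m=1}G(\chi)=-\prod_{j=1}^{m-1}G(\nu^j)$, and the assertion is equivalent to the sign-free identity $\prod_{j=0}^{m-1}G(\nu^j\psi)=\psi(m^{-m})\,G(\psi^m)\prod_{j=1}^{m-1}G(\nu^j)$. (As a sanity check, the case $m=2$ is the Legendre-type duplication $G(\psi)G(\eta\psi)=\psi(4^{-1})G(\eta)G(\psi^2)$, where $\eta=\nu$ is the quadratic character.)

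Next I would expand the left-hand side as an $m$-fold character sum,
$$\prod_{j=0}^{m-1}G(\nu^j\psi)=\sum_{t_0,\dots,t_{m-1}\in\F_q^*}\psi\!\left(\textstyle\prod_j t_j\right)\nu\!\left(\textstyle\prod_j t_j^{\,j}\right)\theta\!\left(\textstyle\sum_j t_j\right),$$
and separate terms according to the value $s=\sum_j t_j$. On the locus $s\neq 0$ I would substitute $t_j=s\,r_j$ with $\sum_j r_j=1$; then $\theta(\sum_j t_j)=\theta(s)$, $\psi(\prod_j t_j)=\psi^m(s)\psi(\prod_j r_j)$, and $\nu(\prod_j t_j^{\,j})=\nu^{\,m(m-1)/2}(s)\,\nu(\prod_j r_j^{\,j})$. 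The $s$-sum then factors off as a single Gauss sum $G\!\big(\psi^m\nu^{\,m(m-1)/2}\big)$ times a residual ``finite-field Dirichlet'' sum over the simplex $\{\sum_j r_j=1\}$ that no longer involves $\theta$. For $m$ odd, $\nu^{\,m(m-1)/2}=\epsilon$ and this is exactly $G(\psi^m)$; for $m$ even, $\nu^{\,m(m-1)/2}$ is the quadratic character, and I would dispose of the extra quadratic twist by using the $m=2$ relation as a base case. An equivalent, more bookkeeping-friendly route to the same factorization is to peel the Gauss sums off one at a time via $G(\alpha)G(\beta)=J(\alpha,\beta)G(\alpha\beta)$ for $\alpha\beta\neq\epsilon$, giving $\prod_{j=0}^{m-1}G(\nu^j\psi)=\big(\prod_{k=1}^{m-1}J(\psi^k\nu^{\binom{k}{2}},\,\psi\nu^k)\big)\,G\!\big(\psi^m\nu^{\,m(m-1)/2}\big)$, with the degenerate partial products treated separately.

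The crux, and the step I expect to be the main obstacle, is to show that the residual factor — either the simplex sum or the product of Jacobi sums — depends on $\psi$ only through the single scalar $\psi(m^{-m})$, collapsing to $\psi(m^{-m})$ times a quantity independent of $\psi$. I would establish this by induction on $m$: the relation is multiplicative in $m$, which reduces the problem to $m$ prime, and for $m$ prime I would evaluate the Jacobi-sum product directly, extracting $\psi(m^{-m})$ from the normalizations in $J(\alpha,\beta)=G(\alpha)G(\beta)/G(\alpha\beta)$ and the scaling substitution that produces the $m^{-m}$. Once the $\psi$-dependence is pinned to $\psi(m^{-m})$, the remaining $\psi$-independent constant is identified by specializing to a convenient $\psi$ (e.g.\ comparing absolute values), forcing it to equal $\prod_{j=1}^{m-1}G(\nu^j)$; reintroducing $G(\epsilon)=-1$ then recovers the stated form with its leading sign. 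I would remark that, alternatively, the whole relation drops out of the Gross--Koblitz formula combined with the Gauss--Legendre multiplication formula for the $p$-adic Gamma function, which is the conceptual source of the factor $\psi(m^{-m})$ as the analogue of $m^{1/2-ms}$; but the character-sum argument above stays within the elementary framework of this paper.
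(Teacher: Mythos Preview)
The paper does not give a proof of this theorem at all: it is stated with a citation to Lang \cite{lang} and used as a black box, with only the special case $m=3$ (Corollary~\ref{hasdav}) extracted for the point-counting argument. So there is no ``paper's own proof'' to compare against; any proof you supply goes beyond what the paper does.

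Your outline is a recognizable sketch of one of the standard elementary approaches, and the overall architecture is sound: factor off a single Gauss sum $G(\psi^m)$ via the scaling $t_j=s r_j$ (or, equivalently, iterate $G(\alpha)G(\beta)=J(\alpha,\beta)G(\alpha\beta)$), and then argue that the residual Jacobi-sum/simplex factor is $\psi(m^{-m})\prod_{j=1}^{m-1}G(\nu^j)$. The step you correctly flag as the crux --- that the residual factor depends on $\psi$ only through $\psi(m^{-m})$ --- is, however, the entire content of the theorem, and your plan for it (``induction on $m$, reduce to $m$ prime, evaluate the Jacobi-sum product directly'') is not yet a proof: the multiplicativity-in-$m$ reduction is genuine, but for prime $m$ you still have to show why the product $\prod_{k=1}^{m-1} J(\psi^k\nu^{\binom{k}{2}},\psi\nu^k)$ collapses, and ``extracting $\psi(m^{-m})$ from the normalizations'' is where an actual computation or a further idea (e.g.\ the change of variables $x\mapsto x^m$ in an auxiliary character sum, or Stickelberger-type $p$-adic valuations) has to appear. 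Your remark that Gross--Koblitz plus the $p$-adic Gauss multiplication formula gives a clean conceptual proof is correct and is essentially the route Lang takes; if you want a fully elementary argument, Berndt--Evans--Williams or Weil's original paper would give you a model to fill in the gap.
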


\begin{cor}[Davenport-Hasse for $q\equiv 1\pmod{3}$]\label{hasdav} If $k\in \Z$ and $q$ satisfies $q\equiv 1 \pmod{3}$ then $G_kG_{k+\frac{q-1}{3}}G_{k+\frac{2(q-1)}{3}}=qT^{-k}(27)G_{3k}$. \end{cor}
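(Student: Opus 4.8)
The plan is to obtain Corollary \ref{hasdav} as an immediate specialization of the general Davenport--Hasse relation in Theorem \ref{hasdavgen}, taking $m=3$ and $\psi=T^k$. Since we already assume $q\equiv 1\pmod 3$ and $p>3$ (so that $27$ is a unit of $\F_q$ and $T^{-k}(27)$ makes sense), essentially all that remains after invoking Theorem \ref{hasdavgen} is to identify the relevant characters and to evaluate one product of Gauss sums.

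First I would pin down the characters $\chi$ with $\chi^3=\epsilon$: because $3\mid q-1$ these are precisely $\epsilon$, $T^{(q-1)/3}$ and $T^{2(q-1)/3}$, and they are pairwise distinct. With $\psi=T^k$ the left-hand side of the relation in Theorem \ref{hasdavgen} is $\prod_{\chi^3=\epsilon}G(T^k\chi)=G_k\,G_{k+\frac{q-1}{3}}\,G_{k+\frac{2(q-1)}{3}}$, which is exactly the left-hand side of the corollary. On the right-hand side, $\psi^3=T^{3k}$ and $\psi(3^{-3})=T^k(27^{-1})=T^{-k}(27)$, so the relation reads
$$G_k\,G_{k+\frac{q-1}{3}}\,G_{k+\frac{2(q-1)}{3}}=-T^{-k}(27)\,G_{3k}\prod_{\chi^3=\epsilon}G(\chi).$$

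The only genuine computation is the evaluation of $\prod_{\chi^3=\epsilon}G(\chi)=G_0\,G_{\frac{q-1}{3}}\,G_{\frac{2(q-1)}{3}}$. Here $G_0=\sum_{x\in\F_q^*}\theta(x)=-1$, exactly as used above in computing the sum $B$. Writing $\eta=T^{(q-1)/3}$, a character of exact order $3$, one has $\overline{\eta}=T^{-(q-1)/3}=T^{2(q-1)/3}$, so $G_{\frac{q-1}{3}}G_{\frac{2(q-1)}{3}}=G(\eta)G(\overline{\eta})=\eta(-1)\,q$ by the standard identity $G(\chi)G(\overline{\chi})=\chi(-1)q$ for $\chi\neq\epsilon$. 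Since $\eta(-1)$ is a cube root of unity with $\eta(-1)^2=\eta(1)=1$, it equals $1$; hence $\prod_{\chi^3=\epsilon}G(\chi)=-q$. Substituting into the displayed relation gives $-T^{-k}(27)\,G_{3k}\cdot(-q)=q\,T^{-k}(27)\,G_{3k}$, which is the asserted formula.

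I do not expect a real obstacle here: the content is entirely carried by Theorem \ref{hasdavgen}, and the remaining work is careful bookkeeping of character indices modulo $q-1$ together with the elementary fact that a nontrivial cubic character is trivial on $-1$. The only points worth a sentence in the write-up are that $p>3$ is what guarantees $27\neq 0$ in $\F_q$, and that the degenerate cases $3k\equiv 0\pmod{q-1}$ (where $G_{3k}=G_0=-1$) are covered with no change to the argument.
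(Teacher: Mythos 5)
Your derivation is correct and is exactly the route the paper intends: the corollary is the specialization of Theorem \ref{hasdavgen} with $m=3$, $\psi=T^k$, and your evaluation $\prod_{\chi^3=\epsilon}G(\chi)=G_0\,G_{\frac{q-1}{3}}G_{\frac{2(q-1)}{3}}=(-1)\cdot q=-q$ (using $\eta(-1)=1$ for the cubic character) is precisely what converts the $-G(\psi^3)\psi(27^{-1})$ form into $qT^{-k}(27)G_{3k}$. The bookkeeping, including the degenerate case $3k\equiv 0\pmod{q-1}$, checks out.
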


First we use  Corollary \ref{hasdav} to write $G_{3m}=G_mG_{m+\frac{q-1}{3}}G_{m+\frac{2(q-1)}{3}}\frac{T^m(27)}{q}$,  giving

$$t_q(E_{a_1,a_3})=-\frac{1}{q}-\frac{1}{q^2(q-1)}\sum_mG_{-m}^3G_mG_{m+\frac{q-1}{3}}G_{m+\frac{2(q-1)}{3}}T^m(27)T^{-3m}(a_1)T^m(-a_3).$$

Next, make the substitution $G_mG_{-m}=qT^m(-1)$, which holds whenever $m\neq 0$. For $m=0$, we write $G_mG_{-m}=T^m(-1)=qT^m(-1)-(q-1)T^m(-1)$:
$$t_q(E_{a_1,a_3})=-\frac{1}{q}-\frac{1}{q(q-1)}\sum_m G_{-m}^2 G_{m+\frac{q-1}{3}}G_{m+\frac{2(q-1)}{3}}T^{-3m}(a_1)T^m(27a_3)+\frac{G_{\frac{q-1}{3}}G_{\frac{2(q-1)}{3}}}{q^2}.$$

For the last term above, note that $G_{\frac{q-1}{3}}G_{\frac{2(q-1)}{3}}=q$ and cancel with the first term, giving
$$t_q(E_{a_1,a_3})=-\frac{1}{q(q-1)}\sum_m G_{-m}^2 G_{m+\frac{q-1}{3}}G_{m+\frac{2(q-1)}{3}}T^{-3m}(a_1)T^m(27a_3).$$

Now apply equation (\ref{jacgauss}) to write {\small $G_{m+\frac{q-1}{3}}G_{-m}=\left( T^{m+\frac{q-1}{3}} \atop T^m \right)G_{\frac{q-1}{3}}q T^m(-1)$ } and \\ {\small $G_{m+\frac{2(q-1)}{3}}G_{-m}=\left(T^{m+\frac{2(q-1)}{3}} \atop T^m\right)G_{\frac{2(q-1)}{3}}qT^m(-1)$.} Plugging this in yields

$$t_q(E_{a_1,a_3})=-\frac{q\left(G_{\frac{q-1}{3}}G_{\frac{2(q-1)}{3}}\right)}{q-1}\sum_m \left( T^{m+\frac{q-1}{3}} \atop T^m \right)\left(T^{m+\frac{2(q-1)}{3}} \atop T^m\right) T^{-3m}(a_1)T^m(27a_3).$$

Again use the fact that $G_{\frac{q-1}{3}}G_{\frac{2(q-1)}{3}}=q$ to get
\begin{eqnarray*}
t_q(E_{a_1,a_3})&=&-\frac{q^2}{q-1}\sum_m \left( T^{m+\frac{q-1}{3}} \atop T^m \right)\left(T^{m+\frac{2(q-1)}{3}} \atop T^m\right) T^m(27a_1^{-3}a_3)\\
&=&-q \cdot {}_2F_1\left( \left. {\begin{array}{cc} T^{\frac{q-1}{3}} & T^{\frac{2(q-1)}{3}}\\ \ & \epsilon \end{array}} \right| 27 a_1^{-3}a_3\right)_q.
\end{eqnarray*}
\end{proof}
When $\tilde{E}_{a_1,a_3}=\tilde{E}_t$ this expression reduces to
$$t_q(E_t)=-q\cdot \Fha{\rho}{\rho^2}{\epsilon}{\frac{27}{t}}_q$$
where $\rho$ is a character of order 3.

Finally, we prove the following lemma, which will allow us to represent the sums of the trace  of Frobenius of elliptic curves with $j$-invariant 0 in terms of Gaussian hypergeometric functions.
\begin{lem}\label{beta} When $p\equiv 1 \pmod{3}$ and $\alpha$ is not a cube in $\F_p^*$,
$$\frac{1}{3}(t_{p^{k-2}}(E_{0,\alpha})+t_{p^{k-2}}(E_{0,\alpha^2})+t_{p^{k-2}}(E_{0,\alpha^3}))=
\left\{
  \begin{array}{ll}
    0 & \hbox{if $k\equiv 0,1 \pmod{3}$} \\
    -p^{k-2}\cdot \Fha{\rho}{\rho^2}{\epsilon}{9\cdot 8^{-1}}_{p^{k-2}} & \hbox{if $k \equiv 2\ \ \pmod{3}$.}
  \end{array}
\right.$$
\end{lem}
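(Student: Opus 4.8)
The plan is to determine the Frobenius eigenvalues of the three curves $E_{0,\alpha}$, $E_{0,\alpha^2}$, $E_{0,\alpha^3}$, observe that they form a single orbit $\{\mu_0,\omega\mu_0,\omega^2\mu_0\}$ (together with complex conjugates) under multiplication by $\omega:=e^{2\pi i/3}$, and then collapse $\sum_{i}t_{p^{k-2}}(E_{0,\alpha^i})$ using the identity $1+\omega^{n}+\omega^{2n}=3$ if $3\mid n$ and $0$ otherwise.

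First I would record that $\Delta(E_{0,a_3})=-27a_3^{4}\neq0$ for $a_3\in\F_p^{*}$, so each $E_{0,a_3}$ is an elliptic curve over $\F_p$, and that $E_{0,a_3}\cong E_{0,a_3'}$ over $\F_p$ exactly when $a_3/a_3'$ is a cube; in particular $E_{0,\alpha^3}\cong E_{0,1}$, which by the discussion in Section \ref{sec:level3} (following equation (\ref{j})) is isomorphic to $E_{24}=E_{24,24^2}$. Next, a direct character-sum count --- for each $x$ the quadratic in $y$ defining $E_{0,a_3}$ has $1+\psi(a_3^{2}+4x^{3})$ roots, where $\psi$ denotes the quadratic character; then one uses $\#\{x:x^{3}=u\}=1+\rho(u)+\bar\rho(u)$ for $u\neq0$ and $\sum_{v}\psi(v+c)=0$ --- gives
$$\#E_{0,a_3}(\F_p)=p+1+\rho\!\left(\tfrac{a_3^{2}}{4}\right)J+\overline{\rho\!\left(\tfrac{a_3^{2}}{4}\right)J},\qquad J:=J(\rho,\psi),$$
using $\psi(a_3^{2}/4)=1$ and $\rho(-1)=1$. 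Since $\rho\psi$ has order $6$ one has $|J|^{2}=p$, so (up to conjugation) the Frobenius eigenvalue of $E_{0,a_3}$ is $-\rho(a_3^{2}/4)J$, of absolute value $\sqrt p$; and since $\rho(a_3^{2})=\rho(a_3)^{2}$ runs through all three cube roots of unity as $a_3$ runs through $\alpha,\alpha^{2},\alpha^{3}$, these three eigenvalues are exactly $\mu_0,\omega\mu_0,\omega^{2}\mu_0$ for a fixed $\mu_0$ with $\mu_0\bar\mu_0=p$. (Equivalently, this is the standard fact that cubic twisting of a curve of $j$-invariant $0$ multiplies the Frobenius eigenvalue by a cube root of unity; indeed the three curves become isomorphic over $\F_{p^{3}}$, since $\F_p^{*}\subseteq(\F_{p^{3}}^{*})^{3}$ when $p\equiv1\pmod{3}$.)

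By Weil's description of point counts over $\F_{p^{n}}$, each $t_{p^{n}}(E_{0,\alpha^{i}})$ equals $\zeta^{n}\mu_0^{n}+\bar\zeta^{\,n}\bar\mu_0^{\,n}$ with $\zeta$ running over the cube roots of unity, so
$$\sum_{i=1}^{3}t_{p^{n}}(E_{0,\alpha^{i}})=\mu_0^{n}\bigl(1+\omega^{n}+\omega^{2n}\bigr)+\bar\mu_0^{\,n}\bigl(1+\bar\omega^{\,n}+\bar\omega^{\,2n}\bigr),$$
which is $0$ when $3\nmid n$ and is $3\bigl(\mu_0^{n}+\bar\mu_0^{\,n}\bigr)$ when $3\mid n$. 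Taking $n=k-2$ and noting $3\mid(k-2)\iff k\equiv2\pmod{3}$, the case $k\equiv0,1\pmod{3}$ follows at once. When $k\equiv2\pmod{3}$ the three summands coincide and equal $t_{p^{k-2}}(E_{0,\alpha^{3}})=t_{p^{k-2}}(E_{24})$; since $p>3$ forces $p\nmid24$ and good reduction of $E_{24}$, and $q=p^{k-2}\equiv1\pmod{3}$, Theorem \ref{hyper} applies to $E_{24}=E_{24,24^2}$ with parameter $27\,a_1^{-3}a_3=27\cdot24^{-3}\cdot24^{2}=9\cdot8^{-1}$, giving $t_{p^{k-2}}(E_{24})=-p^{k-2}\,\Fha{\rho}{\rho^2}{\epsilon}{9\cdot8^{-1}}_{p^{k-2}}$, which is $\tfrac13$ of the sum in this case.

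The crux --- and the only step requiring real work --- is establishing that the three Frobenius eigenvalues form a single $\mu_3$-orbit: this is exactly the content of the character-sum evaluation of $\#E_{0,a_3}(\F_p)$ (in particular $|J(\rho,\psi)|=\sqrt p$ and the fact that $\rho(a_3)^{2}$ hits all three cube roots of unity over the three cubic classes), or, alternatively, of the twist-theoretic description above. Everything past that point is the formal collapse of a geometric-type power sum together with one application of Theorem \ref{hyper} via the isomorphism $E_{0,1}\cong E_{24}$.
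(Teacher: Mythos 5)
Your proof is correct, but it takes a genuinely different route from the paper's. You stay over the base field: you evaluate $\#E_{0,a_3}(\F_p)$ by a Jacobi-sum count, observe that the Frobenius eigenvalues of the three cubic twists $E_{0,\alpha}, E_{0,\alpha^2}, E_{0,\alpha^3}$ form a single orbit $\{\mu_0,\omega\mu_0,\omega^2\mu_0\}$ under the cube roots of unity, and then collapse $\sum_i t_{p^{k-2}}(E_{0,\alpha^i})$ with the filter $1+\omega^{k-2}+\omega^{2(k-2)}$, which vanishes unless $3\mid(k-2)$. The paper instead redoes the exponential-sum point count directly over $\F_{q}$ with $q=p^{k-2}$ (its equation (\ref{fro})), so that the sum over the three twists produces $T^{j}(\alpha)+T^{2j}(\alpha)+T^{3j}(\alpha)$ for the cubic characters of $\F_q$, and the dichotomy is decided by determining when the non-cube $\alpha\in\F_p^*$ becomes a cube in $\F_{p^{k-2}}^*$, namely exactly when $k\equiv 2\pmod 3$ (via $\alpha=g^{a(p^{k-3}+\cdots+1)}$ and $p^{k-3}+\cdots+1\equiv k-2\pmod 3$). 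These are two encodings of the same cubic-residue phenomenon, but the computations differ: the paper's version recycles the Gauss-sum machinery from the proof of Theorem \ref{hyper}, and its intermediate formula (\ref{fro}) is reused later (equation (\ref{csum}) in the proof of Corollary \ref{fouriercoeff}); your version avoids any character sums over the big field, makes it transparent that the answer depends only on $k\bmod 3$, and leans on the standard CM/cubic-twist picture (your justification that the eigenvalue orbit is all of $\mu_3$, via $|J(\rho,\psi)|=\sqrt{p}$ and $\rho(a_3)^2$ running over the cube roots of unity, is sound, and the sign or normalization of $J$ is immaterial to the orbit structure). Both arguments finish identically, passing through $E_{0,\alpha^3}\cong E_{0,1}\cong E_{24}$ and applying Theorem \ref{hyper} with parameter $27\cdot 24^{-1}=9\cdot 8^{-1}$.
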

\begin{proof}
As in the proof of Theorem \ref{hyper}, set $P(x,y)=y^2+\alpha^i y-x^3$ and compute
{\small
\begin{eqnarray*}
q(\#\tilde{E}_{0,\alpha^i}(\F_q)-1)&=&\sum_{z \in \F_q}\sum_{x,y \in \F_q} \theta(z P(x,y))\\
&=& q^2+ (q-1)-(q-1)+1+\sum_{x,y,z \in  \F_q^*} \theta( z P(x,y))\\
&=&q^2+1+\frac{1}{(q-1)^3}\sum_{j,k,l}G_{-j}G_{-k}G_{-l}T^k(\alpha^i)T^l(-1)\sum_{z\in \F_q^*}T^{j+k+l}(z)\sum_{x \in  \F_q^*}T^{3l}(x) \sum_{y \in \F_q^*}T^{2j+k}(y)
\end{eqnarray*}
}
The terms above will be nonzero when $3l=0$ and $j=k=l$. Plugging this in above gives
\begin{eqnarray*}
q(\#\tilde{E}_{0,\alpha^i}(\F_q)-1)&=&q^2+1+\sum_{j=0,\frac{q-1}{3}, \frac{2(q-1)}{3}}G_{-j}^3 T^j(\alpha^i)
\end{eqnarray*}
And so
\begin{equation}\label{fro}
t_q(E_{0,\alpha^i})=-\frac{1}{q}-\frac{1}{q}\sum_{j=0,\frac{q-1}{3}, \frac{2(q-1)}{3}} G_{-j}^3 T^j(\alpha^i)
\end{equation}
and summing over all three traces then gives
$$t_q(E_{0,\alpha})+t_q(E_{0,\alpha^2})+t_q(E_{0,\alpha^3})=-\frac{3}{q}-\frac{1}{q}\sum_{j=0,\frac{q-1}{3},\frac{2(q-1)}{3}}G_{-j}^3\left(T^j(\alpha)+T^{2j}(\alpha)+T^{3j}(\alpha)\right).$$

Now let $q=p^{k-2}$ and let $g\in \F_{p^{k-2}}^*$ generate  the group. Since $\alpha \in \F_p^*$, we know that $\alpha^{p-1}=1$, and so $\alpha=g^{a \frac{p^{k-2}-1}{p-1}}=g^{a(p^{k-3}+p^{k-4}+...+1)}$ for some integer $a$. Since $p\equiv 1 \pmod{3}$, it follows that
$p^{k-3}+p^{k-4}+...+1\equiv k-2 \pmod{3}$.

By the above argument, when $k\equiv 0,1 \pmod{3}$, $\alpha$ is not a cube in $\F_{p^{k-2}}^*$ (recall that $\alpha$ was initially chosen as a noncube in $\F_p^*$). Therefore
$$T^j(\alpha)+T^{2j}(\alpha)+T^{3j}(\alpha)=0$$
when $j=\frac{q-1}{3}$ and $ \frac{2(q-1)}{3}$. The only nonzero term then is when $j=0$, and computing this gives
$$t_q(E_{0,\alpha})+t_q(E_{0,\alpha^2})+t_q(E_{0,\alpha^3})=-\frac{3}{q}-\frac{1}{q}G_0^3\cdot 3T^0(\alpha)=0.$$

If however $k\equiv 2 \pmod{3}$, then $\alpha$ is a cube, and  (\ref{fro}) implies that
$$t_q(E_{0,\alpha})=t_q(E_{0,\alpha^2})=t_q(E_{0,\alpha^3}).$$ In particular, all curves with $j$ invariant equal to 0 will have the same trace of Frobenius.  We have already shown that $E_{24}$ is a curve with $j$ invariant equal to 0 with trace of Frobenius equal to
$$t_q(E_{24})=-q\cdot  \Fha{\rho}{\rho^2}{\epsilon}{27\cdot 24^{-1}}_q.$$
 Using this equality then gives
$$\frac{1}{3}(t_{p^{k-2}}(E_{0,\alpha})+t_{p^{k-2}}(E_{0,\alpha^2})+t_{p^{k-2}}(E_{0,\alpha^3}))=-p^{k-2}\Fha{\rho}{\rho^2}{\epsilon}{9 \cdot 8^{-1}}_{p^{k-2}}.$$
\end{proof}

\bibliographystyle{amsplain}

\end{document}